\newcommand{\Z}{\mathbb{Z}}
\newcommand{\N}{\mathbb{N}}
\newcommand{\R}{\mathbb{R}}
\newcommand{\B}{\mathcal{B}}
\newcommand{\SIGMA}{\mathrm{\Sigma}}
\newcommand{\PI}{\mathrm{\Pi}}
\newcommand{\INF}{{}^\infty}
\newcommand{\A}{\mathcal{A}}
\newcommand{\F}{\mathcal{F}}
\newcommand{\PS}{PS}
\newcommand{\SP}{SP}
\newcommand{\rows}{PS}
\newcommand{\rank}{\mathrm{rank}}
\newcommand{\leftz}{\stackrel{0}{\leftarrow}}
\newcommand{\leftp}{\stackrel{1}{\leftarrow}}
\newtheorem{conjecture}{Conjecture}
\algnewcommand{\Choose}[1]{\textbf{choose}~#1}
\algnewcommand{\Nop}[1]{\textbf{sleep}(#1)}
\newtheorem{construction}{Construction}
\newcommand{\picnote}[2]{
	\node[draw,fill=white] () at #1 {{\tiny #2}};
}
\newcommand{\diamondlord}[4]{
	\fill[#4,opacity=.15] (#1+#3,#2) -- (#1,#2+#3) -- (#1-#3,#2) -- (#1,#2-#3);
	\draw[#4,thick] (#1+#3,#2) -- (#1,#2+#3) -- (#1-#3,#2) -- (#1,#2-#3) -- cycle;
}
\newcommand{\green}[1]{\emph{#1}}
\newcommand{\red}[1]{#1}
\newcommand{\CapPProperty}{Bounded signal property\xspace}
\newcommand{\PProperty}{bounded signal property\xspace}
\newcommand{\SoficPProperty}{bounded sofic signal property\xspace}
\begin{document}

\title{Constructions with countable subshifts of finite type\thanks{Research supported by the Academy of Finland Grant 131558}}


\author{Ville Salo\\
TUCS -- Turku Centre for Computer Science, Finland, \\
University of Turku, Finland, \\
vosalo{@}utu.fi
\and Ilkka T\"orm\"a\\
TUCS -- Turku Centre for Computer Science, Finland, \\
University of Turku, Finland, \\
iatorm{@}utu.fi}

\maketitle

\runninghead{V. Salo, I. T\"orm\"a}{Constructions with countable subshifts of finite type}

\begin{abstract}
We present constructions of countable two-dimensional subshifts of finite type (SFTs) with interesting properties. Our main focus is on properties of the topological derivatives and subpattern posets of these objects. We present a countable SFT whose iterated derivatives are maximally complex from the computational point of view, constructions of countable SFTs with high Cantor-Bendixson ranks, a countable SFT whose subpattern poset contains an infinite descending chain and a countable SFT whose subpattern poset contains all finite posets. When possible, we make these constructions deterministic, and ensure the sets of rows are very simple as one-dimensional subshifts.
\end{abstract}

\begin{keywords}
countable subshifts, Cantor-Bendixson rank, subpattern poset
\end{keywords}

\section{Introduction}

A large portion of the theory of multidimensional subshifts of finite type (SFTs), also commonly known as tiling systems, concentrates on aperiodic tile sets and additional properties such sets can have, as it is quite uncommon that a tiling system cannot have a given property, unless there is a trivial obstruction. Simply the problem of finding an aperiodic tile set has a variety of clever solutions in the literature \cite{Be66,Ro71,Ka96a,DuRoSh12}, and examples of other interesting constructions with tilings can be found in for example \cite{PaSc10,Ho09}. In order to find limitations to what SFTs can do, one approach is to restrict to a subclass of SFTs with an additional limitation, and prove that some further properties must follow. The results of \cite{BaDuJe08} can be considered to follow this schema: If we restrict ourselves to the class of nonempty countable SFTs, then we can always find doubly periodic configurations.

Of course, once such a restriction is introduced, it makes sense to find out more about the structure of the resulting subshifts, and to this end, \cite{BaDuJe08} leaves open two interesting questions: whether a Cantor-Bendixson rank more than $\omega$ can be obtained for a countable SFT, and whether the subpattern poset of a countable SFT can contain `only finitely many levels'. As $\omega$ (an infinite upward chain) cannot be embedded in the subpattern poset of a countable SFT, we interpret the second question as asking whether the reversed ordinal $-\omega$ (an infinite downward chain) can be embedded.

The problem of finding an infinite Cantor-Bendixson rank was solved in \cite{JeVa11} by drawing runs of a Turing machine on the configurations of the SFTs. Once it is shown that the SFT is countable, the existence of an infinite Cantor-Bendixson rank follows easily, as the Cantor-Bendixson ranks obtained are cofinal in the Cantor-Bendixson ranks of countable $\PI^0_1$ sets. This also implies that the construction is in a sense optimal. In \cite{SaTo12b}, we gave an alternative example of rank more than $\omega$ with a more geometric construction, and also showed a perhaps simpler alternative for embedding computations, using a counter machine instead of a Turing machine.

The problem of embedding an infinite downward chain (the total order $-\omega$) in the subpattern poset is a bit trickier. We constructed a sofic shift with this property in \cite{SaTo12b}, and conjectured that this is impossible to do with an SFT, mainly because our sofic example relied heavily on the uncountability of the SFT cover, and perhaps also because we hoped for this to be the case.

This paper is an extension of \cite{SaTo12b}. As our main result, we refute our conjecture from \cite{SaTo12b} by exhibiting a countable two-dimensional SFT whose subpattern poset contains an infinite downward chain. This means that both questions of \cite{BaDuJe08} have positive solutions, implying that the restriction of countability for SFTs is, in the end, rather weak. We also draw ideas from our earlier paper \cite{SaTo12c}, where we studied cellular automata on countable one-dimensional SFTs and sofic shifts. As a cellular automaton is little more than a deterministic tiling rule, \cite{SaTo12c} can be thought of as studying subshifts which are deterministic downward (or upward, depending on the convention) and whose horizontal projective subdynamics (sets of rows) are contained in a countable SFT. We refer to the latter property as the \PProperty. It turns out that most of the constructions in \cite{SaTo12b} already have the \PProperty, and we try to determinize our constructions whenever possible. Interestingly, the construction of an SFT with high Cantor-Bendixson rank is possible with all the three properties determinism, countability and the \PProperty, but we show that determinism \emph{or} countability, in conjunction with the \PProperty, already forbids an infinite descending chain.

In addition to the construction of an infinite descending chain and SFTs with high Cantor-Bendixson ranks, we include our construction of a countable SFT whose derivatives grow in complexity from \cite{SaTo12b}, and show how to make this deterministic. As further applications of the counter machine construction, we optimize some of the results of \cite{JeVa11}. On the side of posets, the construction in \cite{SaTo12b} of countable SFTs whose posets include embeddings of arbitrary finite posets is made deterministic and augmented with the \PProperty as well. Further, using these SFTs as building blocks, we build a single countable deterministic SFT whose subpattern poset contains all finite posets.

\section{Definitions and notation}

Let $S$ be a finite set of \emph{symbols}, called the \emph{alphabet}, endowed with the discrete topology. For an integer dimension $d \geq 1$, the set $S^{\Z^d}$, equipped with the product topology, is called the \emph{$d$-dimensional full shift on $S$}. Elements $x$ of $S^{\Z^d}$ are called \emph{configurations}. A configuration $x \in S^{\Z^d}$ is \emph{uniform} if there exists $s \in S$ with $x_{\vec n} = s$ for all $\vec n \in \Z^d$. A \emph{pattern over $S$} is a pair $(D,s)$, where $D \subset \Z^d$ is a finite \emph{domain}, and $s : D \to S$ gives the arrangement of symbols in $D$. A pattern $P = (D,s)$ \emph{occurs} in a configuration $x$, denoted $P \sqsubset x$, if we have $x_{D + \vec{n}} = s$ for some $\vec{n} \in \Z^d$. For all $\vec n \in \Z^d$, we define the \emph{translation} (known as the \emph{shift action}, if $d = 1$) $\sigma^{\vec n} : S^{\Z^d} \to S^{\Z^d}$ by $\sigma^{\vec n}(x)_{\vec m} = x_{\vec n + \vec m}$.

A \emph{$d$-dimensional subshift over $S$} is a closed subset $X \subset S^{\Z^d}$ satisfying $\sigma^{\vec n}(X) = X$ for all $\vec n \in \Z^d$. Alternatively, all subshifts $X$ can be defined by a set $\F$ of \emph{forbidden patterns} as $X = \{ x \in S^{\Z^d} \;|\; \forall P \in \F: P \not\sqsubset x \}$. If $\F$ is finite, then $X$ is said to be \emph{of finite type} (SFT for short). Given a finite domain $D \subset \Z^d$, the set of patterns occurring in the points of a subshift $X$ with domain $D$ is denoted $\B_D(X)$, the set of all patterns of $X$ is $\bigcup_D \B_D(X) = \B(X)$, and the set of symbols occurring in $X$ is denoted $\A(X)$. A \emph{block map} is a continuous mapping $f : X \to Y$, where $X$ and $Y$ are $d$-dimensional subshifts (possibly over different alphabets), which intertwines the shift maps of $X$ and $Y$: $f \circ \sigma^{\vec n} = \sigma^{\vec n} \circ f$ for all $\vec n \in \Z^d$. Alternatively, a block map $f$ can be defined by a \emph{local function} $F : \B_D(X) \to \A(Y)$ by $f(x)_{\vec n} = F(x)_{D + \vec n}$ for all $x \in X$ and $\vec n \in \Z^d$, where $D$ is a finite domain, called the \emph{neighborhood} of $f$ \cite{He69}. An image of a subshift under a block map is a subshift, and images of SFT's are called \emph{sofic shifts}. Two subshifts are called \emph{conjugate} if there is a bijective block map between them. See \cite[Section 13.10]{LiMa95} for a short survey on multidimensional symbolic dynamics.

A subshift is two-dimensional if not specified otherwise. In all informal descriptions, the y-axis is vertical and increases upward, and the x-axis increases to the right. That is, the words up and north refer to the vector $(0, 1)$, while right and east refer to $(1, 0)$. Rows are horizontal and columns are vertical, and for an integer $i \in \N$ and configuration $x \in S^{\Z^2}$, we denote by $x_i$ the $i$th row of $x$, that is, the configuration $y \in S^\Z$ defined by $y_j = x_{(j,i)}$.

For a subshift $X \subset S^\Z$, the block maps from $X$ to itself are called \emph{cellular automata}. The set of \emph{limit spacetime diagrams} of a cellular automaton $f : X \to X$ is the SFT
\[ \{ x \in S^{\Z^2} \;|\; \forall j \in \Z: x_j \in X \wedge x_j = f(x_{j+1}) \}. \]
The reason we do not call such configurations simply the spacetime diagrams of $f$ is that if $f$ is not surjective, only rows with an infinite chain of preimages appear in them, and the set of such rows is usually called the limit set of $f$.

To define a one-dimensional SFT or sofic shift, instead of supplying the forbidden patterns, we usually use the notation
\[ \B^{-1}(L) = \{ x \in S^\Z \;|\; \forall r: \exists u \in L: x_{[-r, r]} \sqsubset u \} \]
where $L$ is a regular language. As the notation $x_i$ for $x \in S^{\Z^2}, i \in \N$ implies, we think of a two-dimensional configuration as being a one-dimensional configuration with legal rows as the alphabet. In particular, we may use the notation $x_{[i,j]}$ to extract a finite list of rows from $x$.

We call a vector $(x,y) \in \Z^2$ with $\gcd(x,y) = 1$ a \emph{direction}, and we denote by $SL_d(\Z)$ the restriction of $SL_d(\R)$ to those functions that map $\Z^d$ bijectively to itself. In the case $d = 2$, they are given by $2 \times 2$ integer matrices of determinant $1$, and it follows from Bezout's identity that for any two directions $\vec d, \vec e \in \Z^2$, there exists an element $A \in SL_2(\Z)$ with $A(\vec d) = \vec e$. For a configuration $x \in S^{\Z^d}$ and $A \in SL_d(\Z)$, we define $A(x) \in S^{\Z^d}$ by $A(x)_{\vec n} = x_{A^{-1}(\vec n)}$ for all $\vec n \in \Z^d$, and for a subshift $X \subset S^{\Z^d}$, we define $A(X) = \{ A(x) \;|\; x \in X \}$. From the linearity of $A$ it follows that $A(X)$ is also a subshift. We fix a special element $A_{\frac{\pi}{2}} = \left( \begin{smallmatrix} 0 & -1 \\ 1 & 0 \end{smallmatrix} \right) \in SL_2(\Z)$ which performs a counterclockwise rotation by $\frac{\pi}{2}$.

We say that $X \subset S^{\Z^2}$ \emph{has a period} if for some vector $\vec n \in \Z^2 \setminus \{(0, 0)\}$ and for all $x \in X$, $x = \sigma^{\vec n}(x)$ holds, and $\vec n$ is then called the \emph{period} of $x$. If $X$ has period $\vec n$, and every period it has is a rational multiple of $\vec n$, then we say $X$ is \emph{singly periodic}. If $X$ has two periods which are linearly independent over $\R$, then we say $X$ is \emph{periodic}. We define periods of configurations, and periodicity and single periodicity of configurations analogously. For $y \in S^\Z$, we say $y$ is periodic if $y = \sigma^p(y)$ for some $p \in \N$, and we say $y$ is \emph{eventually periodic} if for some $n_0, p \in \N$, we have $y_i = y_{i + p}$ for all $i > n_0$ and $y_i = y_{i - p}$ for all $i < -n_0$. A two-dimensional configuration is horizontally (vertically) eventually periodic if its rows (columns) are eventually periodic.

Let $X \subset S^{\Z^2}$ be a subshift, and denote $H = \{ (a,b) \;|\; a \in \Z, b \geq 1 \}$. We say that $X$ is \emph{southward deterministic}, if whenever $x \in S^{\Z^2}$ is such that no forbidden pattern of $X$ occurs in $x|_H$, there exists at most one $y \in X$ with $x|_H = y|_H$. We say $X$ is \emph{extendably southward deterministic}, if there exists exactly one such $y$. By the translation invariance of $X$ and a compactness argument, south determinism is equivalent to the existence of an integer $i \geq 1$ such that $x_{\vec n} = y_{\vec n}$ for all $\vec n \in [-i, i] \times [1,i]$ implies $x_{\vec 0} = y_{\vec 0}$, when $x, y \in X$. We say $X$ is \emph{(extendably) deterministic in the direction $\vec d$} if $A(X)$ is (extendably) southward deterministic, where $A \in SL_2(\Z)$ is such that $A( A_{\frac{\pi}{2}} (\vec d)) = (1,0)$. Note that for Wang tiles, it is common to define northwest determinism as the property that a tile is uniquely defined by its north and west neighbors, whereas we would call the corresponding SFT southeast deterministic, since the tiles to the southeast of a known half plane are determined.

We define the \emph{projective subdynamics} of a subshift $X \subset S^{\Z^2}$ as the set of horizontal rows appearing in its configurations: $\PS(X) = \{ x_0 \;|\; x \in X \}$. For a general direction $\vec d \in \Z^2$, the \emph{$\vec d$-projective subdynamics} $\PS_{\vec d}(X)$ of $X$ is defined as $\PS(A(X))$, where $A \in SL_2(\Z)$ is such that $A( A_{\frac{\pi}{2}} (\vec d)) = (0,1)$. This means that we examine the contents of discrete lines perpendicular to $\vec d$, and in particular, $\PS(X) = \PS_{(0,1)}(X)$. For a configuration $x \in S^{\Z^2}$, we define $\rows(x) = \{ x_i \;|\; i \in \Z \}$, and generalize this to $\rows_{\vec d}(x)$ analogously. We say $X$ has the \emph{\PProperty} in direction $\vec d$, if for some $\tilde{X}$ conjugate to $X$ there exists a countable one-dimensional SFT $Y$ such that $\PS_{\vec d}(\tilde{X}) \subset Y$ (see Lemma~\ref{lem:UniqueParsing}). When the direction is not given, the default value $\vec d = (0, 1)$ is assumed. Intuitively, having the \PProperty in some direction means that the subshift can send information in that direction only using a bounded number of signals. The \PProperty in a fixed direction is, by definition, invariant under conjugacy. However, it is certainly possible that the rows of $X$ are not contained in a countable SFT but those of $\tilde{X}$ are, while $X$ and $\tilde{X}$ are conjugate. In our constructions, when we state that an SFT we construct has the \PProperty, we always mean that its set of rows is actually contained in a countable SFT. The definitions of determinism and the \PProperty via orthogonal directions are similar to the notion of \emph{slicing} as defined in \cite{DeFoWe13}, although we consider SFTs rather than cellular automata.

By a \emph{preorder} we mean a reflexive, transitive relation on a set $P$. If the preorder is antisymmetric, then $P$ with this order is called a \emph{poset}, or a partially ordered set. From any preorder $\leq$ on a set $P$, we obtain a poset $(\tilde{P}, \leq)$ where $\tilde{P}$ is the set of $\leq$-equivalence classes of $P$ (that is, the equivalence classes of the equivalence relation $(p \leq q) \wedge (q \leq p)$) and $\leq$ is the natural induced order among these classes. We often use the elements of $P$ and their $\leq$-equivalence classes interchangeably.

We obtain a natural preorder on $S^{\Z^d}$ by defining $x \leq y$ by $\B(x) \subset \B(y)$. To a subshift $X \subset S^{\Z^2}$, we associate its \emph{subpattern poset} $SP(X) = (\tilde{X}, \leq)$. Comparisons between points of $X$ refer to comparisons of the associated elements of the subpattern poset. In particular, chains in $X$ refer to chains in $\tilde{X}$.

We say a poset $P$ has the \emph{ascending chain condition}, or ACC, if $p_1 \leq p_2 \leq \cdots$ implies that for some $n \in \N$, $p_i = p_j$ for all $i, j \geq n$. That is, every ascending chain is eventually constant. Symmetrically, we define the \emph{descending chain condition} DCC. For posets $(P, \leq), (Q, \preccurlyeq)$, we denote by $P \boxtimes Q$ the poset $(P \dot{\cup} Q, (\leq) \dot{\cup} (\preccurlyeq))$, that is, the disjoint union of $P$ and $Q$ where the order among elements of $P$ is $\leq$, the order among elements of $Q$ is $\preccurlyeq$, and there are no relations between elements of $P$ and $Q$. For posets $((P_i, \leq_i))_i$, we similarly write $\boxtimes_i P_i$ to denote the disjoint union of all posets $P_i$. For a poset $(P, \leq)$, we write $-P$ for the poset $(P, \geq)$, where $p \geq q$ is defined by $q \leq p$. Note that a poset $P$ has the ACC if and only if $-P$ has the DCC. An \emph{order-embedding} of a poset $(P, \leq)$ to a poset $(Q, \preccurlyeq)$ is an injective function $\phi : P \to Q$ such that $p \leq q$ if and only if $\phi(p) \preccurlyeq \phi(q)$ for all $p, q \in P$. If $\phi$ is bijective, it is called an \emph{order-isomorphism}.

Let $X$ be a topological space. For every ordinal $\lambda$, we define the \emph{Cantor-Bendixson derivative of order $\lambda$} of $X$, denoted by $X^{(\lambda)}$, by transfinite induction:
\begin{itemize}
\item $X^{(0)} = X$,
\item $X^{(\alpha+1)} = \{x \in X^{(\alpha)} \;|\; x \mbox{ is not isolated in } X^{(\alpha)}\}$, and
\item $X^{(\alpha)} = \bigcap_{\beta < \alpha} X^{(\beta)}$, if $\alpha$ is a limit ordinal.
\end{itemize}
There must exist an ordinal $\lambda$ such that $X^{(\lambda)} = X^{(\lambda + 1)}$, as $X$ is a set. The lowest such $\lambda$ is called the \emph{Cantor-Bendixson rank} of $X$, and is denoted $\rank(X)$. From the definition of the derivative operator, it is clear that then $X^{(\rank(X))}$ is a perfect space. We note here that in a subshift $X \subset S^{\Z^d}$, a configuration $x$ is isolated if and only if there exists a pattern $(D,s) \in \B(X)$ such that $x$ is the only element of $X$ with $x_D = s$. We use this fact without any explicit mention in many of our proofs. We say that a topological space $X$ is \emph{ranked} if and only if $X^{(\rank(X))} = \emptyset$. The \emph{rank $\rank_X(x)$ of a point $x$} in a ranked topological space $X$ is the smallest ordinal $\lambda$ such that $x \notin X^{(\lambda)}$.

Let $\phi$ be a formula in first-order arithmetic. If $\phi$ contains only bounded quantifiers, then we say $\phi$ is $\SIGMA^0_0$ and $\PI^0_0$. For all $n > 0$, we say $\phi$ is $\SIGMA^0_n$ if it is equivalent to a formula of the form $\exists k: \psi$ where $\psi$ is $\PI^0_{n-1}$, and $\phi$ is $\PI^0_n$, if it is equivalent to a formula of the form $\forall k: \psi$ where $\psi$ is $\SIGMA^0_{n-1}$. This classification is called the \emph{arithmetical hierarchy} (see e.g. \cite[Chapter IV.1]{Od89} for an introduction to the topic). A subset $X$ of $\N$ is $\SIGMA^0_n$ or $\PI^0_n$, if $X = \{ x \in \N \;|\; \phi(x) \}$ for some $\phi$ with the corresponding classification. It is known that the $\SIGMA^0_1$ sets are exactly the recursively enumerable sets, and the $\PI^0_1$ sets their complements. When classifying sets of objects other than natural numbers (e.g. patterns), we assume that the objects are in some natural and computable bijection with $\N$. Also, a subshift is given the same classification as its language, so that, for example, two-dimensional SFTs are $\PI^0_1$ subshifts. See \cite{CeRe98} for a general survey on $\PI^0_1$ sets. The nonstandard quantifier $\exists^\infty n : \phi(n)$ has the meaning `there exist infinitely many $n$ such that $\phi(n)$.'

A subset $X \subset \N$ is \emph{many-one reducible} (or simply \emph{reducible}) to another set $Y \subset \N$, if there exists a computable function $f : \N \to \N$ such that $x \in X$ iff $f(x) \in Y$. If every set in a class $\mathcal{C}$ is reducible to $X$, then $X$ is said to be \emph{$\mathcal{C}$-hard}. If, in addition, $X$ is in $\mathcal{C}$, then $X$ is \emph{$\mathcal{C}$-complete}.

Let $k \in \N$. A \emph{$k$-counter machine} is defined as a quintuple $M = (k,\Sigma,\delta,q_0,q_f)$, where $\Sigma$ is a finite \emph{state set}, $q_0,q_f \in \Sigma$ the \emph{initial and final states} and
\[ \delta \subset (\Sigma \times [1,k] \times \{Z,P\} \times \Sigma) \cup (\Sigma \times [1,k] \times \{-1,0,1\} \times \Sigma) \]
the \emph{transition relation}. A \emph{configuration of $M$} is an element of $\Sigma \times \N^k$, with the interpretation of $(q,n_1,\ldots,n_k)$ being that the machine is in state $q$ with counter values $n_1,\ldots,n_k$. The machine operates in possibly nondeterministic steps as directed by $\delta$. If we have $(p, i, Z, q) \in \delta$ ($(p, i, P, q) \in \delta$), then from any configuration $(p,n_1,\ldots,n_k)$ such that $n_i = 0$ ($n_i > 0$, respectively), the machine $M$ may move to the configuration $(q,n_1,\ldots,n_k)$. If we have $(p, i, r, q) \in \delta$ with $r \in \{-1,0,1\}$, then from any configuration $(p,n_1,\ldots,n_i,\ldots,n_k)$, the machine $M$ may move to the configuration $(q,n_1,\ldots,n_i+r,\ldots,n_k)$. We may assume that counter machines never try to decrement a counter below $0$. The machine is initialized from a configuration $(q_0, 0, \ldots, 0)$, and it halts when it reaches the final state $q_f$. The machine $M$ is called \emph{reversible} if every configuration can be reached in one step from at most one other configuration. Note that a reversible counter machine need not be bijective, only injective. The classical reference for counter machines is \cite{Mi67}, although our precise definition comes from \cite{Mo96}.

There is a convenient way of converting an arbitrary counter machine into a reversible machine with only two counters, and we use this in several constructions. The following result can be extracted from the proofs in \cite{Mo96}, and we especially note that while the original results concern only deterministic machines, the following lemmas hold even for nondeterministic ones with exactly the same proofs.

\begin{lemma}[Proved as Theorem 3.1 of \cite{Mo96}]
\label{thm:Morita}
For any $k$-counter machine $M=(k,\Sigma,\delta,q_0,q_f)$ there exists a reversible $(k+2)$-counter machine $M'=(k+2,\Sigma \cup \Delta,\delta',q_0,q_f)$ such that for all $m_i, n_i, h \in \N$ and $q, p \in \Sigma$,
\[ (q,m_1,\ldots,m_k) \Rightarrow_M (p,n_1,\ldots,n_k) \]
holds if and only if there exists $\ell \in \N$ with
\[ (q,m_1,\ldots,m_k,h,0) \Rightarrow^*_{M'} (p,n_1,\ldots,n_k,\ell,0) \]
where the intermediate states of the computation are in $\Delta$. Also, all transitions of $\delta'$ whose first component is in $\Delta$ are deterministic.
\end{lemma}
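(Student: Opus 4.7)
The plan is to simulate each transition of $M$ by a gadget in $M'$ that runs through intermediate states in $\Delta$, using the two extra counters as a reversibility-preserving workspace. Enumerate the transitions of $M$ as $\tau^{(1)}, \ldots, \tau^{(d)}$ where $d = |\delta|$. The gadget for $\tau^{(j)}$ leaves the source state in $\Sigma$, performs three tasks via a linear chain of forward-deterministic transitions in $\Delta$ (apply the effect of $\tau^{(j)}$ to counters $1, \ldots, k$; multiply counter $k+1$ by $d$; add $j$ to it), and then enters the target state in $\Sigma$. Counter $k+2$ is used only as scratch during the multiplication and is cleared to $0$ before the gadget ends. Each of the counter primitives used inside the chain has a standard reversible implementation: increments are already reversible; decrements become reversible once the preceding $\Delta$-state is dedicated to this gadget; the positive test $(p,i,P,q)$ is implemented by a decrement-then-increment pair routed through a unique $\Delta$-state; and the zero test $(p,i,Z,q)$ merely changes state while leaving the counters untouched.

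The main obstacle is to guarantee \emph{global} reversibility, i.e.\ that every configuration of $M'$ has at most one predecessor. Forward-determinism inside each gadget is easy to arrange by routing its intermediate transitions through $\Delta$-states dedicated to that gadget. The delicate point is the step that re-enters a state $p \in \Sigma$, since several distinct transitions of $M$ may share target $p$, and their gadgets must not produce collisions. The base-$d$ encoding in counter $k+1$ resolves this: because the gadget for $\tau^{(j)}$ updates $k+1$ from $h$ to $d\cdot h + j$, the residue $\ell \bmod d$ uniquely identifies the last gadget used, and we can arrange the final $\Delta$-chain of each gadget so that reversing the step amounts to decoding this residue with the help of counter $k+2$. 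Combined with the forward-determinism of $\Delta$-transitions, this yields reversibility of $M'$ and shows that any $M'$-trajectory between two $\Sigma$-states with $k+2 = 0$ at both endpoints and intermediate states in $\Delta$ decomposes uniquely into one complete gadget application.

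The equivalence in the statement then follows by direct inspection. If $(q, m_1, \ldots, m_k) \Rightarrow_M (p, n_1, \ldots, n_k)$ via $\tau^{(j)}$, then running the gadget for $\tau^{(j)}$ from $(q, m_1, \ldots, m_k, h, 0)$ produces $(p, n_1, \ldots, n_k, d h + j, 0)$ with all intermediate states in $\Delta$. Conversely, any $M'$-computation from $(q, m_1, \ldots, m_k, h, 0)$ to $(p, n_1, \ldots, n_k, \ell, 0)$ whose intermediate states lie in $\Delta$ is, by the decomposition above, the gadget application for some $\tau^{(j)}$ applicable at $(q, m_1, \ldots, m_k)$, so the corresponding $M$-step occurs. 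The nondeterminism of $M$ is reflected in $M'$ purely as a choice of gadget at the starting $\Sigma$-states, so all transitions whose first component lies in $\Delta$ remain deterministic. The only place one must be genuinely careful is in checking that the multiplication-by-$d$ and digit-addition loops form reversible deterministic chains in $\Delta$; this is a routine but tedious Morita-style construction, and is exactly the point where the original proof in \cite{Mo96} transfers unchanged to the nondeterministic setting.
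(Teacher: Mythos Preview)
The paper does not supply its own proof of this lemma: it is stated as ``Proved as Theorem~3.1 of \cite{Mo96}'' and the surrounding text merely remarks that Morita's argument for deterministic machines carries over verbatim to the nondeterministic case. Your sketch is a faithful outline of exactly that construction --- history encoded in base $d$ on counter $k+1$, counter $k+2$ as scratch for the multiply-by-$d$ loop, gadgets routed through fresh $\Delta$-states, and the observation that nondeterminism is confined to the $\Sigma$-to-$\Delta$ entry step --- so you are not diverging from the paper but rather filling in what it delegates to the reference.

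One point worth tightening: you correctly flag the merge into a shared target $p\in\Sigma$ as the delicate step, but the sentence ``we can arrange the final $\Delta$-chain of each gadget so that reversing the step amounts to decoding this residue'' is doing a lot of work. Concretely, if every gadget's last instruction into $p$ is of the same type (say a zero-test on counter $k+2$), the machine is \emph{not} reversible at $p$ regardless of what counter $k+1$ contains, because reversibility is a local condition on the transition relation, not on computability of the inverse. Morita handles this by structuring the merge as the reverse of a reversible \emph{branch}: the last few $\Delta$-states before $p$ implement (in reverse) a division of counter $k+1$ by $d$, so that the transitions entering $p$ from the different gadgets are distinguished by the counter values themselves, not just by the state names. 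Your phrase ``with the help of counter $k+2$'' gestures at this, but if you want the sketch to stand on its own you should say explicitly that the final segment of each gadget is the reversal of a mod-$d$ decoding routine, so that distinct gadgets enter $p$ via transitions whose applicability conditions on the counters are mutually exclusive.
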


\begin{lemma}[Proved as Theorem 4.1 of \cite{Mo96}]
\label{thm:Morita2}
Let $M=(k,\Sigma,\delta,q_0,q_f)$ be a reversible $k$-counter machine, and denote by $p_i$ the $i$th prime number. Then, there exists a reversible two-counter machine $M'=(2,\Sigma \cup \Delta,\delta',q_0,q_f)$ such that for all $m_i, n_i \in \N$ and $q, p \in \Sigma$,
\[ (q,m_1,\ldots,m_k) \Rightarrow_M (p,n_1,\ldots,n_k) \]
holds if and only if
\[ (q,p_1^{m_1} \cdots p_k^{m_k},0) \Rightarrow^*_{M'} (p,p_1^{n_1} \cdots p_k^{n_k},0) \]
where the intermediate states of the computation are in $\Delta$. Also, all transitions of $\delta'$ whose first component is in $\Delta$ are deterministic.
\end{lemma}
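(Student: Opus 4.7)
The plan is to use the classical Gödel-style prime-power encoding, adapted to preserve reversibility. I would store the tuple $(m_1,\ldots,m_k)$ in the first counter as the single number $N = p_1^{m_1}\cdots p_k^{m_k}$, and maintain the invariant that the second counter is $0$ whenever the state of $M'$ lies in $\Sigma$. The states of $\Sigma$ act as ``macro-states'' that correspond exactly to states of $M$, and between any two consecutive visits to $\Sigma$, the machine runs a deterministic sub-routine with states drawn entirely from $\Delta$. Thus the correctness statement reduces to exhibiting, for each atomic transition of $M$, a reversible sub-routine in $\Delta$ that realizes the corresponding arithmetic operation on $N$.

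Second, I would implement the four kinds of atomic operations as sub-routines. Incrementing $m_i$ corresponds to $N \mapsto p_i N$: repeatedly decrement counter $1$ by $1$ while incrementing counter $2$ by $p_i$, then copy counter $2$ back to counter $1$ by the reverse loop. Decrementing $m_i$ is the mirror image, producing $N \mapsto N/p_i$ under the hypothesis that $p_i \mid N$. The zero and positive tests for $m_i$ both amount to computing $N$ modulo $p_i$: repeatedly subtract $p_i$ from counter $1$ while incrementing counter $2$ by $1$, track the residue in the finite control, and then restore $N$ by the reverse loop. According to whether the residue is $0$ or nonzero, we enter the macro-state of $M$ prescribed by a $Z$- or $P$-transition of $\delta$. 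Each of these sub-routines, read on the encoded value, is an arithmetic bijection, and upon termination counter $2$ is again $0$ so the invariant is preserved.

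Third, and this is the main obstacle, the sub-routines must be genuinely reversible as $\Delta$-transitions: every intermediate configuration should have exactly one predecessor in $M'$. The naive ``while $c>0$ do $\ldots$ end'' loop is not reversible, because the loop-head state is entered both from outside (loop entry) and from the previous iteration. I would resolve this in the standard Morita way by splitting each loop into distinct entry, body and exit states, and by exploiting the fact that the values of the two counters together with the current state in $\Delta$ determine unambiguously which phase of which sub-routine is being executed. Concretely, in the ``multiply by $p_i$'' loop the pair $(c_1,c_2)$ satisfies $c_1 + c_2/p_i = N_{\text{in}}$ and $p_i \mid c_2$ throughout the first phase, so the predecessor of any intermediate configuration is uniquely reconstructible; the copy-back phase is handled symmetrically. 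The residue test is made reversible by always completing the full division and restoration, with the residue stored only in the finite control.

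Finally, combining all sub-routines yields a two-counter machine $M'$ whose $\Sigma$-to-$\Sigma$ computations faithfully simulate the transitions of $M$ on the encoded counters, with all intermediate transitions deterministic and locally injective. Since the original machine $M$ is itself reversible on $\Sigma \times \N^k$ and the encoding $(m_1,\ldots,m_k) \mapsto p_1^{m_1}\cdots p_k^{m_k}$ is injective, the macro-transitions of $M'$ are injective on $\Sigma \times \N \times \{0\}$, and together with the reversibility of each $\Delta$-sub-routine this gives global reversibility of $M'$, as required.
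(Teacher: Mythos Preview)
The paper does not give its own proof of this lemma: it is explicitly stated as ``Proved as Theorem 4.1 of \cite{Mo96}'' and is simply quoted from Morita's paper, with the remark that the same argument goes through for nondeterministic machines. So there is nothing in the paper to compare against beyond the citation.

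That said, your proposal is the standard approach and matches Morita's construction in outline: encode the $k$ counters as $p_1^{m_1}\cdots p_k^{m_k}$ in counter~1, keep counter~2 as scratch space that returns to~$0$ at every $\Sigma$-state, and implement increment/decrement/test by reversible multiply/divide/residue sub-routines. Your discussion of why the naive loop is not reversible and how to split entry/body/exit states is the right point to highlight, and the concluding argument that injectivity on $\Sigma\times\N\times\{0\}$ follows from reversibility of $M$ plus injectivity of the encoding is correct. One small gap worth being explicit about: the reversibility at the \emph{boundary} between a $\Sigma$-state and a $\Delta$-sub-routine needs care, since a state $q\in\Sigma$ may be the target of several distinct transitions of $M$, each realized by a different $\Delta$-sub-routine; you should say why the predecessor in $M'$ is still unique there (this is exactly where reversibility of $M$ is used, combined with the fact that the final $\Delta$-state of each sub-routine records which transition of $\delta$ was just simulated).
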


Note in particular that the countability of the set of infinite computations is preserved by the transformations.

\subsection{A few words on the figures and proofs}

As most of our results are constructions of SFTs whose configurations are infinite two-dimensional pictures with some desired properties, it should not surprise the reader that the article contains a lot of drawings. The configurations of our SFTs usually have large uniformly colored triangles and rectangles, and we specify which edges of these shapes can be glued together. We work directly with SFTs, and do not give sets of Wang tiles for any of the constructions, as implementing the rational lines with Wang tiles is cumbersome and automatizable. However, in some of our general proofs we assume that SFTs are given as sets of Wang tiles or as sets of allowed patterns of other shapes, but this will be explicitly stated. The pictures are best viewed in color, but we have also included labels for the colors in the figures when possible. The labels used are shown in Table~\ref{tab:MainColors}.

\setlength{\tabcolsep}{5pt}
\begin{table}
\caption{The labels of the main colors.}
\begin{center}
\begin{tabular}{|c||c|c|c|c|c|c|c|c|c|c|}
\hline
Color & White & Gray & Dark gray & Orange & Dark orange\\
\hline
Label & W & Gr & DGr & O & DO\\
\hline \hline
Color & Red & Dark red & Light red & Green & Dark green\\
\hline
Label & R & DR & LR & G & DG\\
\hline \hline
Color & Blue & Dark blue & Light blue & Yellow & Dark yellow\\
\hline
Label & B & DB & LB & Y & DY\\
\hline
\end{tabular}
\end{center}
\label{tab:MainColors}
\end{table}

Multiple levels of abstraction are used in figures. In Figure~\ref{fig:GridShift}, an actual partial configuration of an SFT is given. However, most figures are sofic images of the actual SFTs, leaving out some details which are explained in the text. Figure~\ref{fig:CounterComputing} is an example of a projection where most of the detail is left in. Some of the different overlapping regions are not shown (for example, the area to the left of the computation head should be colored differently from the area on its right), and only partial information about the state of the computation head is shown. On many occasions, we take this further, drawing discrete versions of rational lines as straight lines and leaving out the contours of tiles to reduce clutter, and the details are given in the text.

As is common practise, we do not go in too much detail on statements whose proof is a straightforward geometrical case analysis, and we do not show the specifics of the counter machines used in the proofs but instead just give the algorithms they execute in pseudocode.

\section{Basic results and examples}
\label{sec:Basic}

In this section, we go through some basic results on the different notions we have defined for subshifts, and give some examples of subshifts with interesting properties. The following characterizes the countability of subshifts:

\begin{lemma}[\cite{BaDuJe08}]
\label{lem:CountableRanked}
A subshift $X$ is ranked ($X^{(\lambda)} = \emptyset$ for some ordinal $\lambda$) if and only if it is countable.
\end{lemma}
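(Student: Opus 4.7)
The plan is to prove both directions by exploiting two classical facts about compact metrizable spaces: they are second countable, so isolated points of any subset form a countable set, and every nonempty closed-in-itself perfect subset is uncountable (the Cantor-Bendixson kernel argument). A subshift $X \subset S^{\Z^d}$ is a closed subspace of $S^{\Z^d}$, hence compact and metrizable with a countable basis (the cylinders determined by finite patterns), so both facts apply.

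For the direction ``ranked implies countable'', set $\lambda = \rank(X)$, so $X^{(\lambda)} = \emptyset$. Then
\[
X \;=\; \bigcup_{\alpha < \lambda} \bigl( X^{(\alpha)} \setminus X^{(\alpha+1)} \bigr),
\]
and by definition each $X^{(\alpha)} \setminus X^{(\alpha+1)}$ consists of the isolated points of $X^{(\alpha)}$. Because $X^{(\alpha)}$ inherits second countability, its isolated points form a countable set. The rank $\lambda$ itself must be a countable ordinal: the sequence $(X^{(\alpha)})_\alpha$ is strictly decreasing until it stabilizes, and in a second countable space a strictly decreasing transfinite sequence of closed sets has countable length (each drop must kill at least one element of a countable basis). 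Hence $X$ is a countable union, indexed by a countable ordinal, of countable sets, so $X$ is countable.

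For the converse, suppose $X$ is not ranked. Then there is a least ordinal $\lambda$ such that $X^{(\lambda)} = X^{(\lambda+1)}$, and by minimality $X^{(\lambda)} \neq \emptyset$. Call this set $K$; by construction $K$ is closed, nonempty, and has no isolated points (i.e.\ is perfect in itself). A standard tree argument inside the compact metric space $X$ then produces a continuous injection of the Cantor set $\{0,1\}^\N$ into $K$: choose two disjoint nonempty clopen subsets of $K$, inside each choose two further disjoint nonempty clopen subsets with shrinking diameter, and iterate; the compactness of $X$ guarantees that each infinite branch converges to a distinct point of $K$. Thus $K$, and hence $X$, is uncountable.

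The main obstacle is really just bookkeeping in the forward direction: one must be careful that the Cantor-Bendixson process on a subshift terminates at a countable ordinal, so that the union decomposing $X$ is countable. This is where second countability of $S^{\Z^d}$ (and hence of $X$) is used in an essential way. Both directions are otherwise routine topological facts that do not use anything special about the subshift structure beyond compactness and second countability, so the lemma is really a restatement of the Cantor-Bendixson theorem in the setting of subshifts.
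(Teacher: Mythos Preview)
Your argument is correct: both directions are the standard Cantor--Bendixson theorem for Polish (or merely second countable compact metric) spaces, and a subshift in $S^{\Z^d}$ is such a space. The forward direction decomposes $X$ into countably many layers of isolated points, with the key observation that the rank $\lambda$ is countable because a strictly increasing well-ordered chain of open sets in a second countable space injects into the basis; the reverse direction is the usual perfect-kernel argument producing an embedded Cantor set.

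The paper does not give its own proof of this lemma at all --- it simply cites \cite{BaDuJe08} and moves on. So there is no ``paper's own proof'' to compare against; you have supplied what the paper omits. Your write-up is essentially the textbook proof and is fine as stated. One small stylistic remark: you could shorten the forward direction by noting that in any second countable space the scattered part (the union of the $X^{(\alpha)}\setminus X^{(\alpha+1)}$) is automatically countable regardless of how long the derivation runs, since each of its points is isolated in some $X^{(\alpha)}$ and hence determined by a single basic open set, giving a direct injection into the basis without separately bounding $\lambda$.
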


We need the following lemmas about Cantor-Bendixson ranks and derivatives. The proofs are straightforward, but we give them for completeness.

\begin{lemma}
\label{lem:OrderPreserving}
A subspace of a ranked topological space is ranked. Furthermore, taking the Cantor-Bendixson rank is an order-preserving operation from ranked topological spaces to ordinals.
\end{lemma}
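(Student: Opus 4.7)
The plan is to prove both claims simultaneously by establishing a single transfinite monotonicity statement: if $Y\subseteq X$, then $Y^{(\alpha)}\subseteq X^{(\alpha)}$ for every ordinal $\alpha$, where $Y$ carries the subspace topology. Once this is in hand, substituting $\alpha=\rank(X)$ gives $Y^{(\rank(X))}\subseteq X^{(\rank(X))}=\emptyset$, which shows both that $Y$ is ranked and that $\rank(Y)\leq\rank(X)$.

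The induction itself is routine. The base case $Y^{(0)}=Y\subseteq X=X^{(0)}$ is immediate. For a limit ordinal $\alpha$, the equality $Y^{(\alpha)}=\bigcap_{\beta<\alpha}Y^{(\beta)}$ together with the inductive hypothesis $Y^{(\beta)}\subseteq X^{(\beta)}$ for $\beta<\alpha$ yields the desired inclusion by intersecting both sides. The only step that requires a genuine argument is the successor step, and this is the place where one must be careful about the topology.

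For the successor step, I would take $y\in Y^{(\alpha+1)}$, which by definition means $y$ is a non-isolated point of $Y^{(\alpha)}$ in the subspace topology. Unfolding what this means: for every open set $U\subseteq X$ containing $y$, the relatively open set $U\cap Y^{(\alpha)}$ contains some point other than $y$. Using the inductive hypothesis $Y^{(\alpha)}\subseteq X^{(\alpha)}$, this point lies in $U\cap X^{(\alpha)}$, so $y$ is not isolated in $X^{(\alpha)}$ either, giving $y\in X^{(\alpha+1)}$. I do not anticipate any real obstacle here; the only thing to mind is that the subspace topology on $Y^{(\alpha)}$ agrees with the one inherited from viewing $Y^{(\alpha)}$ as a subset of $X^{(\alpha)}$, which is immediate since both are obtained from the topology on $X$.

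Finally, to conclude, I record that the transfinite inclusion together with $X^{(\rank(X))}=\emptyset$ forces $Y^{(\rank(X))}=\emptyset$, so $Y$ is ranked and its rank, being the least $\lambda$ with $Y^{(\lambda)}=\emptyset$, is at most $\rank(X)$. This gives both the rankedness of subspaces and the order-preservation of $\rank$, completing the proof.
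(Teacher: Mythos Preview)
Your proposal is correct and follows essentially the same approach as the paper: both establish the transfinite monotonicity $Y^{(\alpha)}\subseteq X^{(\alpha)}$ by induction and then substitute $\alpha=\rank(X)$ to conclude. The paper merely asserts the induction as ``clear'' without spelling out the three cases, whereas you give the details, including the careful check of the successor step with respect to the subspace topology.
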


\begin{proof}
Clearly, $X \subset Y$ implies $X^{(\lambda)} \subset Y^{(\lambda)}$ for all ordinals $\lambda$ by transfinite induction. Because $Y$ is ranked, $Y^{(\lambda)} = \emptyset$ holds for the Cantor-Bendixson rank $\lambda$ of $Y$. Then also $X^{(\lambda)} = \emptyset$, so the Cantor-Bendixson rank of $X$ cannot be more than $\lambda$. 
\end{proof}

\begin{lemma}
\label{lem:OpenDerivative}
Let $X$ be a topological space and $Y$ an open subset of $X$. Then, for all ordinals $\lambda$, we have $X^{(\lambda)} \cap Y = Y^{(\lambda)}$.
\end{lemma}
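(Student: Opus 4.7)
The plan is to prove the identity by transfinite induction on $\lambda$. The base case $\lambda = 0$ is immediate since $X^{(0)} \cap Y = X \cap Y = Y = Y^{(0)}$, and the limit case follows simply by pulling the intersection $\bigcap_{\beta < \lambda}$ outside: $Y^{(\lambda)} = \bigcap_{\beta<\lambda} Y^{(\beta)} = \bigcap_{\beta<\lambda} (X^{(\beta)} \cap Y) = \bigl(\bigcap_{\beta<\lambda} X^{(\beta)}\bigr) \cap Y = X^{(\lambda)} \cap Y$ by the induction hypothesis.

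The only real work is the successor step: assuming $X^{(\alpha)} \cap Y = Y^{(\alpha)}$, I want to show that for $x \in Y$, being isolated in $Y^{(\alpha)}$ is equivalent to being isolated in $X^{(\alpha)}$. This is where openness of $Y$ is essential, and I expect it to be the main (small) obstacle. In one direction, if $V$ is an open neighborhood of $x$ in $X$ witnessing that $x$ is isolated in $X^{(\alpha)}$, i.e.\ $V \cap X^{(\alpha)} = \{x\}$, then $U := V \cap Y$ is open in $Y$ and satisfies $U \cap Y^{(\alpha)} = V \cap Y \cap (X^{(\alpha)} \cap Y) = V \cap X^{(\alpha)} \cap Y = \{x\}$, so $x$ is isolated in $Y^{(\alpha)}$.

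Conversely, suppose $x$ is isolated in $Y^{(\alpha)}$, and pick $U$ open in $Y$ with $U \cap Y^{(\alpha)} = \{x\}$. Because $Y$ is open in $X$, the set $U$ is also open in $X$, and since $U \subset Y$, we have $U \cap X^{(\alpha)} \subset Y$, so $U \cap X^{(\alpha)} = U \cap X^{(\alpha)} \cap Y = U \cap Y^{(\alpha)} = \{x\}$. Thus $x$ is isolated in $X^{(\alpha)}$. This step is precisely the one that would fail without openness: if $Y$ were only a general subset, we could not upgrade an isolating neighborhood inside $Y$ to an isolating neighborhood inside $X$.

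Combining this equivalence with the induction hypothesis yields
\[ Y^{(\alpha+1)} = \{ x \in Y^{(\alpha)} \mid x \text{ not isolated in } Y^{(\alpha)} \} = \{ x \in X^{(\alpha)} \cap Y \mid x \text{ not isolated in } X^{(\alpha)} \} = X^{(\alpha+1)} \cap Y, \]
completing the induction.
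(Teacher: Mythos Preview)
Your proof is correct and takes essentially the same approach as the paper: transfinite induction on $\lambda$, with the successor step hinging on the fact that an open subset of $Y$ is open in $X$. The paper streamlines slightly by observing that $Y^{(\lambda)} \subset X^{(\lambda)} \cap Y$ is immediate for all $\lambda$, so it only argues the inclusion $X^{(\lambda)} \cap Y \subset Y^{(\lambda)}$ explicitly; your version spells out both directions of the isolation equivalence, but the content is the same.
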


\begin{proof}
It is clear that $Y^{(\lambda)} \subset X^{(\lambda)}$ and $Y^{(\lambda)} \subset Y$, so it is enough to show that $X^{(\lambda)} \cap Y \subset Y^{(\lambda)}$ for every ordinal $\lambda$. We prove the claim by transfinite induction. As a base case, we have $X^{(0)} \cap Y = Y = Y^{(0)}$. Now, let $\lambda = \lambda' + 1$ for some ordinal $\lambda'$. Let $x \in X^{(\lambda)} \cap Y$, and let $U$ be an open neighborhood of $x$ in $Y$. Since $U$ is also open in $X$, there exists
\[ y \in X^{(\lambda')} \cap U = Y^{(\lambda')} \cap U \]
with $y \neq x$. As for limit ordinals $\lambda$, we have $X^{(\lambda')} \cap Y \subset Y^{(\lambda')}$, for all $\lambda' < \lambda$, so
\[ X^{(\lambda)} \cap Y = \bigcap_{\lambda' < \lambda} X^{(\lambda')} \cap Y \subset \bigcap_{\lambda' < \lambda} Y^{(\lambda')} = Y^{(\lambda)}, \]
which concludes the proof. 
\end{proof}

\subsection{The one-dimensional case}

In this section we look at one-dimensional (not necessarily countable) sofic shifts to illustrate the difference between the one-dimensional and the multidimensional case. It is more natural to consider sofic shifts instead of SFTs, as sofic shifts are closed under the derivative operation by Proposition~\ref{prop:1DSoficNiceness}, but the derivative of an SFT can be a proper sofic shift, by Example~\ref{ex:SFTDerivativeSofic}.

One-dimensional sofic shifts have a useful and well-known characterization, Lemma~\ref{lem:SoficIffContexts}, in terms of the different contexts of words that appear in them. We give the characterization without proof, but for example, it easily follows from Theorem 3.2.10 of \cite{LiMa95}.

\begin{definition}
Let $X \subset S^\Z$ be a subshift with $\B_1(X) = S$. The \emph{extender set} of a word $v \in S^*$ in $X$ is $E_X(v) = \{ (w,w') \in (S^*)^2 \;|\; wvw' \sqsubset X \}$.
\end{definition}

The assumption $\B_1(X) = S$ is only for definiteness, as we can then discuss the extender sets of words in a subshift $X$ without specifying $S$.

\begin{lemma}
\label{lem:SoficIffContexts}
A one-dimensional subshift is sofic if and only if it has a finite number of different extender sets.
\end{lemma}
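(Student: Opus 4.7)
The plan is to prove both directions by relating extender sets to a labeled graph presentation.

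For the forward direction, I would take any labeled graph presentation $G = (Q, E, \ell)$ of the sofic shift $X$, with $Q$ finite. For each word $v \in \B(X)$, associate the label relation $R_v = \{(p, q) \in Q \times Q \;|\; \text{there is a path from $p$ to $q$ in $G$ labeled $v$}\}$. Since $Q$ is finite, only finitely many such relations exist. The key observation is that $E_X(v)$ is determined by $R_v$: indeed, $(u, w) \in E_X(v)$ holds iff $uvw$ labels some bi-infinite path, which by concatenation of paths depends only on the endpoints reachable by $u$ from the left, the pairs related by $R_v$, and the endpoints from which $w$ can be followed, and the first and third of these depend only on $u$ and $w$. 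Hence the map $v \mapsto E_X(v)$ factors through $v \mapsto R_v$, giving finitely many extender sets.

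For the backward direction, I would build a labeled graph from the extender sets. The main observation is that $E_X(va)$ depends only on $E_X(v)$ and $a$: writing $(u, w) \in E_X(va) \Leftrightarrow (u, aw) \in E_X(v)$ shows that if $E_X(v) = E_X(v')$ then $E_X(va) = E_X(v'a)$. Take as vertex set the finite collection of extender sets, and for each $v \in \B(X)$ and $a \in S$ with $va \in \B(X)$ add an edge from $E_X(v)$ to $E_X(va)$ labeled $a$. Let $Y$ be the sofic shift of bi-infinite label sequences read along bi-infinite paths.

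To show $X \subseteq Y$, for $x \in X$ I would use compactness: every finite window $x_{[-N, N]}$ is readable along some finite path (start from $E_X(\epsilon)$ and follow the labels), and König's lemma on the finite set of vertices produces a bi-infinite path labeled $x$. For the converse $Y \subseteq X$, given a bi-infinite path $\ldots \to s_i \xrightarrow{y_i} s_{i+1} \xrightarrow{y_{i+1}} \cdots$ and any window $y_{[i, j]}$, pick a representative $v_i$ with $E_X(v_i) = s_i$; the definition of an edge together with the factoring observation above forces $E_X(v_i y_i) = s_{i+1}$, and inductively $v_i y_{[i, k]} \in \B(X)$ for every $k \geq i$, so $y_{[i, j]} \in \B(X)$.

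The slightly delicate step is the bi-infinite extension in $X \subseteq Y$, which requires the finiteness of the state set via a compactness argument; the computational content of both directions, however, is the factoring lemma $E_X(va) = f(E_X(v), a)$, so I would state that lemma separately and use it in both halves.
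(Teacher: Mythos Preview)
The paper does not actually prove this lemma: it is stated without proof, with only the remark that it ``easily follows from Theorem~3.2.10 of \cite{LiMa95}'' (the follower-set characterization of sofic shifts in Lind--Marcus). So there is nothing to compare your argument against on the level of details.

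Your proof is correct and self-contained. The forward direction is exactly the standard observation that $E_X(v)$ is determined by the transition relation $R_v \subset Q \times Q$, of which there are only finitely many. In the backward direction, the factoring property $E_X(va) = f(E_X(v),a)$ is the heart of the matter, and your graph is in fact right-resolving because of it; this also makes the $Y \subseteq X$ inclusion clean, since the representative $v_i$ can be propagated deterministically. The compactness step for $X \subseteq Y$ is fine: each finite window $x_{[-N,N]}$ labels a path (start at $E_X(\epsilon)$), and a diagonal argument over the finite vertex set yields a bi-infinite path.

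The route implicit in the paper's citation is slightly different: one shows that finitely many extender sets forces finitely many \emph{follower} sets $F_X(v) = \{w : vw \in \B(X)\}$ (since $F_X(v)$ is recoverable from $E_X(v)$ as the $\{\epsilon\}$-slice), and then invokes the Lind--Marcus theorem that finitely many follower sets characterizes soficity. Your construction bypasses follower sets entirely and builds the presentation directly on extender sets; this is arguably more natural here since the paper's subsequent Lemma~\ref{lemma:SameContexts} and Proposition~\ref{prop:SoficFinite} are phrased in terms of extender sets anyway.
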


We now relate the extender sets of words in a subshift and its derivative.

\begin{lemma}
\label{lemma:SameContexts}
For a subshift $X \subset S^\Z$,
\[ E_X(u) = E_X(v) \implies E_{X^{(1)}}(u) = E_{X^{(1)}}(v). \]
\end{lemma}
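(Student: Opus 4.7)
My plan is to prove just the inclusion $E_{X^{(1)}}(u) \subseteq E_{X^{(1)}}(v)$, since the reverse follows by swapping the roles of $u$ and $v$. So I would fix $(w,w') \in E_{X^{(1)}}(u)$ witnessed by some $x \in X^{(1)}$ with $wuw' \sqsubset x$; by translating I may assume $x_{[0,|wuw'|-1]} = wuw'$. The strategy is to splice $wvw'$ into $x$ in place of $wuw'$, producing a configuration $y$ that still lies in $X$ (by virtue of the extender-set equality) and which inherits non-isolation from $x$.

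For the first task, I would define $y$ to be the configuration equal to $x$ on $(-\infty,-1]$, equal to $wvw'$ on $[0,|wvw'|-1]$, and equal to $\sigma^{|u|-|v|}(x)$ on $[|wvw'|,\infty)$, so that the portion of $x$ following the original occurrence of $wuw'$ is glued directly after the inserted $wvw'$. To see $y \in X$, observe that for each $n$ the pair $(x_{[-n,-1]}w,\; w'x_{[|wuw'|,n]})$ lies in $E_X(u) = E_X(v)$, so some $z^n \in X$ contains the finite word $x_{[-n,-1]}\, wvw'\, x_{[|wuw'|,n]}$. Aligning this occurrence so that $wvw'$ sits at positions $[0,|wvw'|-1]$ and taking a subsequential limit by compactness of $X$ produces precisely $y$.

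For the second task, I would use that $x$ is non-isolated in $X$: for each $m \in \N$, pick $x^m \in X$ with $x^m_{[-m,m]} = x_{[-m,m]}$ and $x^m \neq x$, and rerun the splicing construction with $x^m$ in place of $x$ to obtain $y^m \in X$ containing $wvw'$ at $[0,|wvw'|-1]$. Because $x^m$ agrees with $x$ on $[-m,m]$, the splicing forces $y^m$ to agree with $y$ on an interval whose length grows linearly in $m$, so $y^m \to y$; conversely, any position outside $[-m,m]$ where $x^m$ differs from $x$ propagates through the splicing to a position where $y^m$ differs from $y$, so $y^m \neq y$. Thus $y$ is non-isolated in $X$, giving $y \in X^{(1)}$ with $wvw' \sqsubset y$, i.e.\ $(w,w') \in E_{X^{(1)}}(v)$.

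The only delicate point is bookkeeping the length discrepancy $|u| - |v|$: one must track indices consistently so that $y$ is well-defined on all of $\Z$, that a discrepancy of $x^m$ with $x$ outside $[-m,m]$ actually lands outside the splicing window in $y^m$, and that the agreement window of $y^m$ with $y$ really does grow with $m$ (it has length roughly $2m + |v| - |u|$ on one side). None of this bookkeeping is deep; it is just the one thing that needs a moment of care.
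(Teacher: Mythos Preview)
Your proof is correct. Both your argument and the paper's hinge on the same splicing operation---replacing $u$ by $v$ in a configuration using the extender-set equality---but you deploy it differently. The paper argues by contradiction and uses the observation that if $wvw' \not\sqsubset X^{(1)}$ then the cylinder $\{x \in X : x_{[0,|wvw'|-1]} = wvw'\}$ consists entirely of isolated points and is therefore finite (being compact and discrete); the splicing map is then a bijection between the $wuw'$-cylinder and the $wvw'$-cylinder, forcing the former to be finite too, whence $wuw' \not\sqsubset X^{(1)}$. Your route is more constructive: you build an explicit witness $y \in X^{(1)}$ for $wvw'$ by splicing a given $x \in X^{(1)}$, and verify non-isolation of $y$ by splicing an approximating sequence $x^m$ and checking that the resulting $y^m$ still converge to $y$ and remain distinct from it. The paper's version is shorter because the finiteness-of-cylinders trick packages the limit argument for you; your version trades that slickness for an explicit, self-contained argument that avoids the detour through the ``compact set of isolated points is finite'' fact. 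The index bookkeeping you flag is indeed the only place to be careful, and you handle it correctly.
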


\begin{proof}
Let $E_X(u) = E_X(v)$, and suppose that $(w, w') \in E_{X^{(1)}}(u) - E_{X^{(1)}}(v)$. Then $wvw' \not\sqsubset X^{(1)}$, so the set of points $x \in X$ with $x_{[0,|wvw'|-1]} = wvw'$ is finite. But since $E_X(u) = E_X(v)$, these are in a bijective correspondence with the points $y$ such that $y_{[0,|wuw'|-1]} = wuw'$, which implies that $wuw' \not\sqsubset X^{(1)}$, a contradiction. 
\end{proof}

This implies that the number of different extender sets cannot increase in the derivative, so the two previous lemmas give the following:

\begin{proposition}
\label{prop:1DSoficNiceness}
The derivative of a one-dimensional sofic shift is sofic.
\end{proposition}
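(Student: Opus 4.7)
The plan is to combine the two preceding lemmas directly. First I would verify that $X^{(1)}$ is in fact a subshift, so that Lemma~\ref{lem:SoficIffContexts} applies to it: closedness follows because the set of isolated points of $X$ is open in $X$ and $X$ itself is closed in $S^\Z$, and shift-invariance follows because the shift is a self-homeomorphism, hence carries isolated points to isolated points. So $X^{(1)}$ is a subshift, and we may speak of its extender sets (working, if desired, over the reduced alphabet $\B_1(X^{(1)}) \subset S$, which does not affect the count of distinct extender sets).

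Next, by Lemma~\ref{lem:SoficIffContexts} applied to $X$, the equivalence relation $\sim_X$ on $S^*$ defined by $u \sim_X v \iff E_X(u) = E_X(v)$ has only finitely many equivalence classes. Lemma~\ref{lemma:SameContexts} states exactly that $\sim_X$ refines the analogous equivalence relation $\sim_{X^{(1)}}$. Therefore $\sim_{X^{(1)}}$ also has only finitely many equivalence classes, i.e.\ $X^{(1)}$ has only finitely many distinct extender sets. Applying Lemma~\ref{lem:SoficIffContexts} once more, this time in the other direction, gives that $X^{(1)}$ is sofic.

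There is no genuine obstacle here — the work has already been done in Lemma~\ref{lemma:SameContexts}; the proposition is simply the combinatorial packaging. The only mildly delicate points are the two bookkeeping remarks above (that $X^{(1)}$ is a subshift, and that words which do not appear in $X^{(1)}$ all share the empty extender set and so contribute only one class to $\sim_{X^{(1)}}$), neither of which requires more than a sentence.
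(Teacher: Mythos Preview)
Your proof is correct and takes essentially the same approach as the paper: the paper simply states that Lemma~\ref{lemma:SameContexts} shows the number of distinct extender sets cannot increase under the derivative, so the two preceding lemmas immediately give the result. Your version just spells out the two bookkeeping points (that $X^{(1)}$ is a subshift and that missing words share the empty extender set) that the paper leaves implicit.
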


A simple further analysis proves the following result.

\begin{proposition}
\label{prop:SoficFinite}
All one-dimensional sofic shifts have finite rank. More explicitly, the rank of a one-dimensional sofic shift is bounded by the number of different extender sets in it.
\end{proposition}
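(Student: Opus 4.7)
The plan is to iterate Lemma~\ref{lemma:SameContexts} along the Cantor-Bendixson derivatives and show that the finite integer $n(Y) := |\{E_Y(u) : u \in S^*\}|$ strictly decreases at every proper derivative. By Proposition~\ref{prop:1DSoficNiceness}, every iterated derivative $X^{(\alpha)}$ of a sofic shift $X$ is itself sofic, so $n(X^{(\alpha)})$ is finite by Lemma~\ref{lem:SoficIffContexts}. Applying Lemma~\ref{lemma:SameContexts} with $X^{(\alpha)}$ in place of $X$, words having equal extender sets in $X^{(\alpha)}$ also have equal extender sets in $X^{(\alpha+1)}$, which yields a well-defined map from the extender classes of $X^{(\alpha)}$ onto those of $X^{(\alpha+1)}$, and in particular $n(X^{(\alpha+1)}) \leq n(X^{(\alpha)})$.

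Next I would show that this inequality is strict whenever $X^{(\alpha+1)} \subsetneq X^{(\alpha)}$. Pick an isolated point $x \in X^{(\alpha)}$ and a word $w$ such that $x$ is the unique element of $X^{(\alpha)}$ with $x_{[0,|w|-1]} = w$ (passing from an arbitrary isolating finite domain to an interval if necessary, by extending the pattern with the values of $x$). Any point of $X^{(\alpha)}$ in which $w$ occurs is a translate of $x$, since such a point, shifted so that $w$ sits at position $0$, must equal $x$ by the uniqueness above. All translates of $x$ are themselves isolated in $X^{(\alpha)}$ (the isolating pattern translates with the point), so the entire orbit of $x$ is disjoint from $X^{(\alpha+1)}$, whence $w \not\sqsubset X^{(\alpha+1)}$ and $E_{X^{(\alpha+1)}}(w) = \emptyset$, while $E_{X^{(\alpha)}}(w) \neq \emptyset$. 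On the other hand, $X^{(\alpha)}$ cannot be the full shift, because the full shift is perfect whereas $X^{(\alpha+1)} \subsetneq X^{(\alpha)}$; so there is a forbidden word $v$ with $E_{X^{(\alpha)}}(v) = \emptyset$. The classes of $v$ and $w$ are distinct in $X^{(\alpha)}$ but both collapse to the empty extender set in $X^{(\alpha+1)}$, so the map on classes fails to be injective and $n$ strictly decreases.

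Since $n(X)$ is a finite nonnegative integer that strictly decreases at every ordinal step before the rank stabilizes, the chain $n(X) > n(X^{(1)}) > n(X^{(2)}) > \cdots$ forces the rank to be finite and bounded by $n(X)$, completing the proof. No transfinite induction is needed, since the process cannot reach any limit ordinal. The main subtlety I anticipate is justifying that the orbit of the isolated point $x$ truly exhausts all occurrences of $w$ in $X^{(\alpha)}$, so that $w$ genuinely disappears from $X^{(\alpha+1)}$; everything else is bookkeeping with extender classes and the already-established lemmas.
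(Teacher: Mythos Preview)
Your proof is correct and follows essentially the same approach as the paper: both argue that the number of extender classes strictly decreases at each proper derivative by exhibiting a word that disappears (your $w$, the paper's $v \in \B(X) \setminus \B(X^{(1)})$) and a word that was already absent (your forbidden $v$, the paper's $u \notin \B(X)$), whose classes merge in the derivative. Your version is simply more explicit about why the isolating word $w$ genuinely vanishes from $X^{(\alpha+1)}$, which the paper leaves implicit.
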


\begin{proof}
Let $X$ be sofic, and let $k$ be the number of different extender sets in $X$. If $X \neq X^{(1)}$, then necessarily $X \neq S^\Z$, so we may choose $u \notin \B(X)$. Further, choose $v \in \B(X) - \B(X^{(1)})$. Now, $E_X(u) \neq E_X(v)$, but $E_{X^{(1)}}(u) = \emptyset = E_{X^{(1)}}(v)$. By Lemma~\ref{lemma:SameContexts}, $X^{(1)}$ has at most $k - 1$ different extender sets. By induction, $X^{(i)}$ has at most $k - i$ different extender sets, and it is then clear that $X^{(i)} = X^{(i+1)}$ for some $i \leq k$. 
\end{proof}

We now show an example of an SFT whose derivative is proper sofic (sofic but not an SFT). Compare this with the SFT constructed in Theorem~\ref{thm:DerivativesCanCompute}, which is in particular a two-dimensional SFT whose derivative is not an SFT.

\begin{example}
\label{ex:SFTDerivativeSofic}
The subshift $X = \B^{-1}(a^*b^*c^*)$ is an SFT. The isolated points in it are all configurations where both $a$ and $c$ occur. Thus, $X^{(1)} = \B^{-1}(a^*b^* + b^*c^*)$, which is a proper sofic shift.
\end{example}

We show the following structure lemma of countable sofic shifts (and in particular SFTs), both to show another difference between the one-dimensional and multidimensional case and to make the restrictiveness of the \PProperty more explicit. The version we give is Lemma~1 in \cite{SaTo12c} (see also Lemma~4.8 of \cite{PaSc10}).

\begin{lemma}
\label{lem:UniqueParsing}
Let $X \subset S^\Z$ be a countable sofic shift. Then there exists a finite set $T$ of tuples of words in $\B(X)$ such that every configuration $x \in X$ is representable as
\[ x = \INF u_0 v_1 u_1^{n_1} \cdots u_{m-1}^{n_{m-1}} v_m u_m^\infty \]
for a unique $t = (u_0,\ldots,u_m,v_1,\ldots,v_m) \in T$. In particular, $X$ has only finitely many periodic configurations.
\end{lemma}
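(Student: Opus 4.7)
My plan is to work with a right-resolving sofic presentation of $X$: a finite labeled directed graph $G = (V, E, \lambda)$ whose bi-infinite label sequences are precisely $X$, and in which distinct out-edges at any vertex carry distinct labels. Passing to such a presentation is standard (Chapter~3 of \cite{LiMa95}), and I may further assume $G$ is trim, meaning every edge lies on some bi-infinite path.

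The heart of the argument is to show that each strongly connected component (SCC) of $G$ is a single simple cycle. If some SCC $C$ contained two distinct simple cycles $c_1, c_2$ based at a common vertex $v$, I would form $2^{\aleph_0}$ distinct right-infinite paths from $v$ by choosing, at each return to $v$, which of $c_1, c_2$ to follow next. By right-resolvingness these paths have pairwise distinct labels, and each extends (since $G$ is trim) to a configuration in $X$, contradicting countability.

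Given this, every bi-infinite path in $G$ has a rigid shape: the sequence of SCCs it visits is monotone in the condensation DAG, hence finite, say $C_0, C_1, \ldots, C_m$; the path spends an infinite tail in $C_0$ on the left and in $C_m$ on the right, and inside each intermediate $C_i$ it follows the unique cycle of $C_i$ a finite number of times. Between successive SCCs it traverses one of finitely many transient paths of bounded length. Choosing $u_i$ to be a canonical cycle label of $C_i$ and absorbing both the intra-SCC starting-phase adjustments and the inter-SCC transitions into finitely many connector words $v_j$, the label of any bi-infinite path can be written in the form $\INF u_0 v_1 u_1^{n_1} \cdots v_m u_m^\infty$ with $n_i \in \N$, and the tuples $(u_0, \ldots, u_m, v_1, \ldots, v_m)$ that arise in this way form a finite set $T$.

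For uniqueness, I would fix once and for all, for each SCC, a distinguished starting vertex of its cycle, and for each ordered pair of SCCs a specific canonical transient path between them; then every $x \in X$ has a distinguished lift to a bi-infinite path in $G$, from which the tuple in $T$ is read off. The final clause about finitely many periodic configurations is immediate, because a periodic $x$ must arise from $m = 0$, and $T$ contains only finitely many candidate words $u_0$. The main obstacle is the SCC-is-a-single-cycle step: it is the place where countability is genuinely exploited, and one must be careful that right-resolvingness does convert the $2^{\aleph_0}$ path choices into $2^{\aleph_0}$ distinct configurations of $X$ rather than just distinct paths in $G$.
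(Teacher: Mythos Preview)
The paper does not prove this lemma; it is quoted from \cite{SaTo12c} (see also \cite{PaSc10}), so there is no in-paper argument to compare against. Your approach via a right-resolving presentation is the standard one, and the key step --- that countability forces every nontrivial SCC to be a single simple cycle --- is correctly argued. This already yields existence of a finite $T$ and the ``finitely many periodic points'' consequence.

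The gap is in your uniqueness paragraph. Making canonical choices \emph{in the graph} (a distinguished vertex per cycle, a canonical transient path between SCCs) does not produce a distinguished lift of a given $x$: a single configuration can have several lifts through genuinely different SCC sequences, and these can yield different tuples of words. For a concrete instance, take vertices $a,b_1,b_2$ with a self-loop at $a$ labeled $0$, a $2$-cycle $b_1\!\to\! b_2\!\to\! b_1$ labeled $00$, and an edge $a\!\to\! b_1$ labeled $1$; this is right-resolving, each SCC is a single cycle, yet $\INF 0\INF$ lifts both to the $a$-loop (tuple $(0)$) and to the $b$-cycle (tuple $(00)$). Your sentence ``then every $x\in X$ has a distinguished lift'' conflates canonical data in $G$ with a canonical preimage of $x$.

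The fix is not hard but must be said. One option is to canonicalize the words rather than the graph: take each $u_i$ to be the primitive root of the corresponding cycle label and a fixed rotation thereof, so that the left and right periodic tails of $x$ determine $u_0$ and $u_m$; then argue the intermediate data from $x$ directly (e.g.\ by choosing, among all representations from your finite $T'$, the one with lexicographically least $(m,|u_0|,\ldots)$), and finally prune $T'$ to the tuples that actually occur as the chosen representative of some $x$. Alternatively, select for each $x$ the lift whose left-tail SCC is minimal in a fixed total order on SCCs (there are at most $|V|$ lifts, all eventually periodic on the left, so this is well-defined), and then check that the resulting pruned $T$ really gives unique representability --- this last verification is the step your outline omits.
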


Finally, we show that a countable one-dimensional sofic shift has a rather uninteresting subpattern poset.

\begin{lemma}
\label{lem:1DInfThenPer}
Let $X \subset S^\Z$ be a sofic shift. If $w \in S^*$ occurs infinitely many times in a configuration $x \in X$, then $X$ has a periodic configuration in which $w$ occurs.
\end{lemma}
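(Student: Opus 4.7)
The plan is to exploit a finite labeled graph presentation of $X$. Every sofic shift admits such a presentation $G = (V, E, \lambda)$, in which the bi-infinite label sequences of paths are exactly the configurations of $X$; this is a standard fact that can also be extracted directly from the finiteness of the collection of extender sets provided by Lemma~\ref{lem:SoficIffContexts} (one takes one state per distinct extender set, and transitions labeled accordingly).

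First I would fix a bi-infinite path $\pi = (e_i)_{i \in \Z}$ in $G$ whose label is $x$, and let $s_i \in V$ denote the source vertex of $e_i$. Since $w$ occurs infinitely often in $x$, its set of starting positions is infinite, hence by symmetry unbounded above; let $i_1 < i_2 < \cdots$ be starting positions of occurrences of $w$ tending to $+\infty$. The sequence $(s_{i_k})_{k \geq 1}$ takes values in the finite set $V$, so by the pigeonhole principle some $q \in V$ satisfies $s_{i_k} = q$ for infinitely many $k$.

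Next, I would pick two such indices $k < l$ with $i_l - i_k \geq |w|$, which is possible since $i_k \to \infty$, and set $u = x_{[i_k, i_l - 1]}$. The finite segment $(e_{i_k}, e_{i_k+1}, \ldots, e_{i_l - 1})$ is then a cycle of $G$ based at $q$ whose label is $u$, so iterating it in both directions yields a bi-infinite periodic path in $G$ with label $y = \INF u^\infty$. Since $G$ presents $X$, we have $y \in X$; moreover $y$ is periodic with period $|u| > 0$, and as $|u| \geq |w|$ and $u$ begins with $w$, the word $w$ occurs in $y$, as required. The only nonroutine ingredient is the use of the finite graph presentation, which is standard; the combinatorial core is just a pigeonhole argument and offers no real obstacle.
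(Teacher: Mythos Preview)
Your argument is correct: the graph presentation together with a pigeonhole on the source states of the occurrences of $w$ is the standard way to obtain a periodic point of $X$ containing $w$, and your care in choosing $i_l - i_k \geq |w|$ guarantees $w \sqsubset {}^\infty u^\infty$. The paper actually states this lemma without proof, treating it as a well-known fact about sofic shifts, so there is nothing to compare against; your write-up is exactly the kind of short justification one would expect.
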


\begin{lemma}
\label{lem:1DBoringPoset}
If $X \subset S^\Z$ is a countable sofic shift, then $x < y$ for $x, y \in X$ if and only if $x$ is a periodic point and $y$ is an eventually periodic point whose left or right tail is equal to the corresponding tail of $x$, up to a power of the shift.
\end{lemma}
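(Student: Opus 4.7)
The plan is to use Lemma~\ref{lem:UniqueParsing}: every $x \in X$ has a unique parsing $\INF u_0 v_1 u_1^{n_1} \cdots v_m u_m^\infty$ drawn from a fixed finite tuple set $T$, so every configuration is eventually periodic in both directions, and purely periodic precisely when the parsing degenerates to a single repeated word. I will establish the two directions of the equivalence separately.

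For the easy direction ($\Leftarrow$), suppose $x$ is purely periodic, say $x = \INF w^\infty$, and (without loss of generality) the right tail of $y$ equals that of $\sigma^k(x)$ for some $k \in \Z$. Every finite factor of $x$ already occurs in its right tail and therefore in $y$, so $\B(x) \subset \B(y)$. Since $y$ is eventually periodic but not purely periodic, its parsing exhibits either a marker $v_i$ or two tails of distinct cyclic class; any factor of $y$ long enough to straddle this irregularity cannot appear in the single-period configuration $x$, so $\B(x) \subsetneq \B(y)$ and hence $x < y$.

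For the hard direction ($\Rightarrow$), assume $x < y$. I would first show that $x$ is purely periodic, by contradiction. Otherwise, fix a parsing of $x$ with a genuinely non-trivial middle block $M^x = v_1 u_1^{n_1} \cdots v_m$ (the case of cyclically distinct tails $u_0,u_m$ is handled similarly) and consider, for each $k$, the factor $w_k = u_0^k \, M^x \, u_m^k$ of $x$. Each $w_k$ must occur in $y$, but $y$'s own parsing has middle blocks $(u_i^y)^{n_i^y}$ of bounded length. For $k$ large, the prefix $u_0^k$ of $w_k$ cannot fit inside any such middle block, so it must lie in $y$'s left tail, forcing that tail to be $\INF u_0$; symmetrically, the right tail of $y$ is $u_m^\infty$. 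The central $M^x$ then sits at the unique transition between the two infinite tails, and by uniqueness of parsing $y = \sigma^j(x)$ for some $j$, giving $\B(y) = \B(x)$ and contradicting $x < y$.

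Hence $x = \INF w^\infty$ is purely periodic. Since $w^k \in \B(x) \subset \B(y)$ for every $k$ and the intermediate blocks of $y$'s parsing have bounded multiplicity, arbitrarily long powers of $w$ must live in one of $y$'s infinite tails; that tail is therefore a cyclic rotation of $w^\infty$ and equals the corresponding tail of $x$ up to a shift. Strictness $x < y$ rules out $y$ being purely periodic (else $y$ would be a shift of $x$ with $\B(y) = \B(x)$), so $y$ is eventually but not purely periodic, as required. The main technical obstacle is the parsing-matching step: carefully using the boundedness of the multiplicities $n_i^y$ and the unboundedness of $k$ to simultaneously force both infinite tails of $y$ to coincide with those of $x$ and pin the $v_i$'s uniquely into $y$'s parsing.
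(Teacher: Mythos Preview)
Your handling of the easy direction and of the periodic-$x$ case is fine and matches the paper's use of Fine--Wilf. The gap is in the non-periodic-$x$ case, and it is a real one. You try to force $y=\sigma^j(x)$ by locating a single occurrence of $w_k=u_0^kM^xu_m^k$ inside $y$'s parsing and then invoking ``uniqueness of parsing.'' Two things go wrong. First, nothing prevents the occurrence from drifting with $k$: when $u_0$ and $u_m$ are cyclically conjugate the block $u_0^k$ may land in $y$'s \emph{right} tail rather than its left, so the claimed identification of the tails is not justified. Second, and more seriously, even if both tails of $y$ really are $\INF u_0$ and $u_m^\infty$, Lemma~\ref{lem:UniqueParsing} does not give $y=\sigma^j(x)$: distinct tuples in $T$ may share the same tail words, so $y$'s middle block can properly contain $M^x$ and still accommodate every $w_k$. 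The ``parsing-matching step'' you flag as the main obstacle is in fact not completable along these lines.

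The paper's route is different. It first observes that each $w_i=x_{[-i,i]}$ must occur \emph{infinitely often} in $y$ (finitely many occurrences would pin a single common center $c$, giving $y=\sigma^{-c}(x)$ and $\B(x)=\B(y)$). It then applies Lemma~\ref{lem:1DInfThenPer}: a word occurring infinitely often in a point of a sofic shift occurs in a periodic point of that shift. Hence every $w_i$ lies in some periodic point of $X$, and since $x$ is aperiodic these periodic points have unbounded period, contradicting the finiteness of periodic points in Lemma~\ref{lem:UniqueParsing}. Note, incidentally, that the very observation you used for periodic $x$ (``long words must live in one of $y$'s tails'') would also finish the non-periodic case once infinite recurrence is established: some tail of $y$ then contains $w_i$ for all $i$, forcing $x$ to be periodic. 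But that is essentially the paper's argument specialised to $\overline{\mathcal{O}(y)}$, not the direct structural pinning you propose.
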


\begin{proof}
If $x, y \in X$ and $x < y$, then $x \neq y$ and $y$ shares arbitrarily long subwords with $x$. Since $x$ and $y$ are in $X$, it follows from Lemma~\ref{lem:UniqueParsing} that they are eventually periodic. If $x$ is actually periodic, then as one of the tails of $y$ must contain arbitrarily long subwords of $x$, the tail must actually be equal to $x$ up to a shift by the theorem of Fine and Wilf \cite{FiWi65}. If $x$ is not periodic, let $w_i = x_{[-i, i]}$. Since $w_i \sqsubset y$ but $x \neq y$, $y$ must contain infinitely many copies of $w_i$ for all $i$. By Lemma~\ref{lem:1DInfThenPer}, all of the $w_i$ then occur in periodic points, and if $x$ is itself not periodic, we obtain infinitely many periodic points in $X$, a contradiction with Lemma~\ref{lem:UniqueParsing}.
\end{proof}

\subsection{Determinism, countability and the \PProperty}

Throughout the later sections, we discuss the properties of determinism, countability and the \PProperty for an SFT. We show by the examples listed in Table~\ref{fig:Combinations} that essentially all combinations of the three are possible. We do not know examples of uncountable SFTs with the \PProperty in every direction. Thus, in the examples, we are satisfied with the downward \PProperty for uncountable SFTs, especially as such SFTs are not the main topic of this article. Similarly, as it is known that only periodic SFTs are deterministic in every direction \cite{BoLi97}, we usually give only one direction of determinism. The nondeterministic SFTs we present are deterministic in no direction. Example~\ref{ex:Grid} will be used later, in the proof of Theorem~\ref{thm:ManyPosets}, and Example~\ref{ex:YYN} is essentially a simplified version of Construction~\ref{con:CounterMachine}. Otherwise, the results of this section serve mainly as an introduction to the different concepts of this article, and are not used in the proofs of the main results.

\begin{example}
\label{ex:Grid}
We give an example of a countable SFT which does not have the \PProperty in any direction but is deterministic in all but the six directions $(1,0), (-1,0), (0,1), (0,-1), (1,-1)$ and $(-1,1)$. This is the grid shift from \cite{SaTo12b}. It consists of infinite horizontal and vertical lines that divide each configuration into rectangles. The rectangles are forced to be squares by coloring their northwest half differently from the southeast half, using a diagonal signal as a separator. The grid shift is over the alphabet $\{0, 1, 2\}$ and is defined by the allowed patterns of size $2 \times 2$ which occur in Figure~\ref{fig:GridShift}.
\begin{figure}[ht]
\begin{center}
\begin{tikzpicture}[scale=0.5]

\fill[color=red!50!white] (0,0) rectangle +(1,1);
\node () at (0.5,0.5) {$2$};
\node () at (1.5,0.5) {$0$};
\node () at (2.5,0.5) {$0$};
\node () at (3.5,0.5) {$0$};
\node () at (4.5,0.5) {$0$};
\fill[color=red!50!white] (5,0) rectangle +(1,1);
\node () at (5.5,0.5) {$2$};
\node () at (6.5,0.5) {$0$};
\node () at (7.5,0.5) {$0$};
\node () at (8.5,0.5) {$0$};
\node () at (9.5,0.5) {$0$};
\fill[color=gray!50!white] (0,1) rectangle +(1,1);
\node () at (0.5,1.5) {$1$};
\fill[color=red!50!white] (1,1) rectangle +(1,1);
\node () at (1.5,1.5) {$2$};
\node () at (2.5,1.5) {$0$};
\node () at (3.5,1.5) {$0$};
\node () at (4.5,1.5) {$0$};
\fill[color=gray!50!white] (5,1) rectangle +(1,1);
\node () at (5.5,1.5) {$1$};
\fill[color=red!50!white] (6,1) rectangle +(1,1);
\node () at (6.5,1.5) {$2$};
\node () at (7.5,1.5) {$0$};
\node () at (8.5,1.5) {$0$};
\node () at (9.5,1.5) {$0$};
\fill[color=gray!50!white] (0,2) rectangle +(1,1);
\node () at (0.5,2.5) {$1$};
\fill[color=gray!50!white] (1,2) rectangle +(1,1);
\node () at (1.5,2.5) {$1$};
\fill[color=red!50!white] (2,2) rectangle +(1,1);
\node () at (2.5,2.5) {$2$};
\node () at (3.5,2.5) {$0$};
\node () at (4.5,2.5) {$0$};
\fill[color=gray!50!white] (5,2) rectangle +(1,1);
\node () at (5.5,2.5) {$1$};
\fill[color=gray!50!white] (6,2) rectangle +(1,1);
\node () at (6.5,2.5) {$1$};
\fill[color=red!50!white] (7,2) rectangle +(1,1);
\node () at (7.5,2.5) {$2$};
\node () at (8.5,2.5) {$0$};
\node () at (9.5,2.5) {$0$};
\fill[color=gray!50!white] (0,3) rectangle +(1,1);
\node () at (0.5,3.5) {$1$};
\fill[color=gray!50!white] (1,3) rectangle +(1,1);
\node () at (1.5,3.5) {$1$};
\fill[color=gray!50!white] (2,3) rectangle +(1,1);
\node () at (2.5,3.5) {$1$};
\fill[color=red!50!white] (3,3) rectangle +(1,1);
\node () at (3.5,3.5) {$2$};
\node () at (4.5,3.5) {$0$};
\fill[color=gray!50!white] (5,3) rectangle +(1,1);
\node () at (5.5,3.5) {$1$};
\fill[color=gray!50!white] (6,3) rectangle +(1,1);
\node () at (6.5,3.5) {$1$};
\fill[color=gray!50!white] (7,3) rectangle +(1,1);
\node () at (7.5,3.5) {$1$};
\fill[color=red!50!white] (8,3) rectangle +(1,1);
\node () at (8.5,3.5) {$2$};
\node () at (9.5,3.5) {$0$};
\fill[color=gray!50!white] (0,4) rectangle +(1,1);
\node () at (0.5,4.5) {$1$};
\fill[color=gray!50!white] (1,4) rectangle +(1,1);
\node () at (1.5,4.5) {$1$};
\fill[color=gray!50!white] (2,4) rectangle +(1,1);
\node () at (2.5,4.5) {$1$};
\fill[color=gray!50!white] (3,4) rectangle +(1,1);
\node () at (3.5,4.5) {$1$};
\fill[color=red!50!white] (4,4) rectangle +(1,1);
\node () at (4.5,4.5) {$2$};
\fill[color=gray!50!white] (5,4) rectangle +(1,1);
\node () at (5.5,4.5) {$1$};
\fill[color=gray!50!white] (6,4) rectangle +(1,1);
\node () at (6.5,4.5) {$1$};
\fill[color=gray!50!white] (7,4) rectangle +(1,1);
\node () at (7.5,4.5) {$1$};
\fill[color=gray!50!white] (8,4) rectangle +(1,1);
\node () at (8.5,4.5) {$1$};
\fill[color=red!50!white] (9,4) rectangle +(1,1);
\node () at (9.5,4.5) {$2$};
\fill[color=red!50!white] (0,5) rectangle +(1,1);
\node () at (0.5,5.5) {$2$};
\node () at (1.5,5.5) {$0$};
\node () at (2.5,5.5) {$0$};
\node () at (3.5,5.5) {$0$};
\node () at (4.5,5.5) {$0$};
\fill[color=red!50!white] (5,5) rectangle +(1,1);
\node () at (5.5,5.5) {$2$};
\node () at (6.5,5.5) {$0$};
\node () at (7.5,5.5) {$0$};
\node () at (8.5,5.5) {$0$};
\node () at (9.5,5.5) {$0$};
\fill[color=gray!50!white] (0,6) rectangle +(1,1);
\node () at (0.5,6.5) {$1$};
\fill[color=red!50!white] (1,6) rectangle +(1,1);
\node () at (1.5,6.5) {$2$};
\node () at (2.5,6.5) {$0$};
\node () at (3.5,6.5) {$0$};
\node () at (4.5,6.5) {$0$};
\fill[color=gray!50!white] (5,6) rectangle +(1,1);
\node () at (5.5,6.5) {$1$};
\fill[color=red!50!white] (6,6) rectangle +(1,1);
\node () at (6.5,6.5) {$2$};
\node () at (7.5,6.5) {$0$};
\node () at (8.5,6.5) {$0$};
\node () at (9.5,6.5) {$0$};
\fill[color=gray!50!white] (0,7) rectangle +(1,1);
\node () at (0.5,7.5) {$1$};
\fill[color=gray!50!white] (1,7) rectangle +(1,1);
\node () at (1.5,7.5) {$1$};
\fill[color=red!50!white] (2,7) rectangle +(1,1);
\node () at (2.5,7.5) {$2$};
\node () at (3.5,7.5) {$0$};
\node () at (4.5,7.5) {$0$};
\fill[color=gray!50!white] (5,7) rectangle +(1,1);
\node () at (5.5,7.5) {$1$};
\fill[color=gray!50!white] (6,7) rectangle +(1,1);
\node () at (6.5,7.5) {$1$};
\fill[color=red!50!white] (7,7) rectangle +(1,1);
\node () at (7.5,7.5) {$2$};
\node () at (8.5,7.5) {$0$};
\node () at (9.5,7.5) {$0$};
\fill[color=gray!50!white] (0,8) rectangle +(1,1);
\node () at (0.5,8.5) {$1$};
\fill[color=gray!50!white] (1,8) rectangle +(1,1);
\node () at (1.5,8.5) {$1$};
\fill[color=gray!50!white] (2,8) rectangle +(1,1);
\node () at (2.5,8.5) {$1$};
\fill[color=red!50!white] (3,8) rectangle +(1,1);
\node () at (3.5,8.5) {$2$};
\node () at (4.5,8.5) {$0$};
\fill[color=gray!50!white] (5,8) rectangle +(1,1);
\node () at (5.5,8.5) {$1$};
\fill[color=gray!50!white] (6,8) rectangle +(1,1);
\node () at (6.5,8.5) {$1$};
\fill[color=gray!50!white] (7,8) rectangle +(1,1);
\node () at (7.5,8.5) {$1$};
\fill[color=red!50!white] (8,8) rectangle +(1,1);
\node () at (8.5,8.5) {$2$};
\node () at (9.5,8.5) {$0$};
\fill[color=gray!50!white] (0,9) rectangle +(1,1);
\node () at (0.5,9.5) {$1$};
\fill[color=gray!50!white] (1,9) rectangle +(1,1);
\node () at (1.5,9.5) {$1$};
\fill[color=gray!50!white] (2,9) rectangle +(1,1);
\node () at (2.5,9.5) {$1$};
\fill[color=gray!50!white] (3,9) rectangle +(1,1);
\node () at (3.5,9.5) {$1$};
\fill[color=red!50!white] (4,9) rectangle +(1,1);
\node () at (4.5,9.5) {$2$};
\fill[color=gray!50!white] (5,9) rectangle +(1,1);
\node () at (5.5,9.5) {$1$};
\fill[color=gray!50!white] (6,9) rectangle +(1,1);
\node () at (6.5,9.5) {$1$};
\fill[color=gray!50!white] (7,9) rectangle +(1,1);
\node () at (7.5,9.5) {$1$};
\fill[color=gray!50!white] (8,9) rectangle +(1,1);
\node () at (8.5,9.5) {$1$};
\fill[color=red!50!white] (9,9) rectangle +(1,1);
\node () at (9.5,9.5) {$2$};
\draw (0,0) grid (10,10);

\end{tikzpicture}
\end{center}
\caption{A partial configuration of the grid shift containing exactly the allowed $2 \times 2$ patterns.}
\label{fig:GridShift}
\end{figure}
\end{example}

\begin{example}
\label{ex:Nondeterministic}
We give an example of a countable SFT which is deterministic in no direction, but has the \PProperty in every direction. First, consider the SFT over $\{0, 1\}$ defined by the allowed patterns of size $2 \times 2$ which occur in Figure~\ref{fig:QuarterShiftPic}:
\begin{figure}[ht]
\begin{center}
\begin{tikzpicture}[scale=0.5]

\fill[color=gray!50!white] (0,0) rectangle +(1,1);
\node () at (0.5,0.5) {$1$};
\fill[color=gray!50!white] (1,0) rectangle +(1,1);
\node () at (1.5,0.5) {$1$};
\fill[color=gray!50!white] (2,0) rectangle +(1,1);
\node () at (2.5,0.5) {$1$};
\fill[color=gray!50!white] (3,0) rectangle +(1,1);
\node () at (3.5,0.5) {$1$};
\fill[color=gray!50!white] (4,0) rectangle +(1,1);
\node () at (4.5,0.5) {$1$};
\fill[color=gray!50!white] (5,0) rectangle +(1,1);
\node () at (5.5,0.5) {$1$};
\fill[color=gray!50!white] (6,0) rectangle +(1,1);
\node () at (6.5,0.5) {$1$};
\fill[color=gray!50!white] (7,0) rectangle +(1,1);
\node () at (7.5,0.5) {$1$};
\fill[color=gray!50!white] (0,1) rectangle +(1,1);
\node () at (0.5,1.5) {$1$};
\fill[color=gray!50!white] (1,1) rectangle +(1,1);
\node () at (1.5,1.5) {$1$};
\fill[color=gray!50!white] (2,1) rectangle +(1,1);
\node () at (2.5,1.5) {$1$};
\fill[color=gray!50!white] (3,1) rectangle +(1,1);
\node () at (3.5,1.5) {$1$};
\fill[color=gray!50!white] (4,1) rectangle +(1,1);
\node () at (4.5,1.5) {$1$};
\fill[color=gray!50!white] (5,1) rectangle +(1,1);
\node () at (5.5,1.5) {$1$};
\fill[color=gray!50!white] (6,1) rectangle +(1,1);
\node () at (6.5,1.5) {$1$};
\fill[color=gray!50!white] (7,1) rectangle +(1,1);
\node () at (7.5,1.5) {$1$};
\fill[color=gray!50!white] (0,2) rectangle +(1,1);
\node () at (0.5,2.5) {$1$};
\fill[color=gray!50!white] (1,2) rectangle +(1,1);
\node () at (1.5,2.5) {$1$};
\fill[color=gray!50!white] (2,2) rectangle +(1,1);
\node () at (2.5,2.5) {$1$};
\fill[color=gray!50!white] (3,2) rectangle +(1,1);
\node () at (3.5,2.5) {$1$};
\fill[color=gray!50!white] (4,2) rectangle +(1,1);
\node () at (4.5,2.5) {$1$};
\fill[color=gray!50!white] (5,2) rectangle +(1,1);
\node () at (5.5,2.5) {$1$};
\fill[color=gray!50!white] (6,2) rectangle +(1,1);
\node () at (6.5,2.5) {$1$};
\fill[color=gray!50!white] (7,2) rectangle +(1,1);
\node () at (7.5,2.5) {$1$};
\fill[color=gray!50!white] (0,3) rectangle +(1,1);
\node () at (0.5,3.5) {$1$};
\fill[color=gray!50!white] (1,3) rectangle +(1,1);
\node () at (1.5,3.5) {$1$};
\fill[color=gray!50!white] (2,3) rectangle +(1,1);
\node () at (2.5,3.5) {$1$};
\fill[color=gray!50!white] (3,3) rectangle +(1,1);
\node () at (3.5,3.5) {$1$};
\fill[color=gray!50!white] (4,3) rectangle +(1,1);
\node () at (4.5,3.5) {$1$};
\fill[color=gray!50!white] (5,3) rectangle +(1,1);
\node () at (5.5,3.5) {$1$};
\fill[color=gray!50!white] (6,3) rectangle +(1,1);
\node () at (6.5,3.5) {$1$};
\fill[color=gray!50!white] (7,3) rectangle +(1,1);
\node () at (7.5,3.5) {$1$};
\node () at (0.5,4.5) {$0$};
\node () at (1.5,4.5) {$0$};
\node () at (2.5,4.5) {$0$};
\node () at (3.5,4.5) {$0$};
\fill[color=red!50!white] (4,4) rectangle +(1,1);
\node () at (4.5,4.5) {$2$};
\fill[color=red!50!white] (5,4) rectangle +(1,1);
\node () at (5.5,4.5) {$2$};
\fill[color=red!50!white] (6,4) rectangle +(1,1);
\node () at (6.5,4.5) {$2$};
\fill[color=red!50!white] (7,4) rectangle +(1,1);
\node () at (7.5,4.5) {$2$};
\node () at (0.5,5.5) {$0$};
\node () at (1.5,5.5) {$0$};
\node () at (2.5,5.5) {$0$};
\node () at (3.5,5.5) {$0$};
\fill[color=red!50!white] (4,5) rectangle +(1,1);
\node () at (4.5,5.5) {$2$};
\fill[color=red!50!white] (5,5) rectangle +(1,1);
\node () at (5.5,5.5) {$2$};
\fill[color=red!50!white] (6,5) rectangle +(1,1);
\node () at (6.5,5.5) {$2$};
\fill[color=red!50!white] (7,5) rectangle +(1,1);
\node () at (7.5,5.5) {$2$};
\node () at (0.5,6.5) {$0$};
\node () at (1.5,6.5) {$0$};
\node () at (2.5,6.5) {$0$};
\node () at (3.5,6.5) {$0$};
\fill[color=red!50!white] (4,6) rectangle +(1,1);
\node () at (4.5,6.5) {$2$};
\fill[color=red!50!white] (5,6) rectangle +(1,1);
\node () at (5.5,6.5) {$2$};
\fill[color=red!50!white] (6,6) rectangle +(1,1);
\node () at (6.5,6.5) {$2$};
\fill[color=red!50!white] (7,6) rectangle +(1,1);
\node () at (7.5,6.5) {$2$};
\node () at (0.5,7.5) {$0$};
\node () at (1.5,7.5) {$0$};
\node () at (2.5,7.5) {$0$};
\node () at (3.5,7.5) {$0$};
\fill[color=red!50!white] (4,7) rectangle +(1,1);
\node () at (4.5,7.5) {$2$};
\fill[color=red!50!white] (5,7) rectangle +(1,1);
\node () at (5.5,7.5) {$2$};
\fill[color=red!50!white] (6,7) rectangle +(1,1);
\node () at (6.5,7.5) {$2$};
\fill[color=red!50!white] (7,7) rectangle +(1,1);
\node () at (7.5,7.5) {$2$};
\draw (0,0) grid (8,8);

\end{tikzpicture}
\end{center}
\caption{A partial configuration of the quarter plane shift containing exactly the allowed $2 \times 2$ patterns.}
\label{fig:QuarterShiftPic}
\end{figure}
This SFT is not deterministic in any direction $(x, y) \in \Z^2$ with $y \geq 0$. Analogously, $A_{\frac{\pi}{2}}^2(X)$ is not deterministic in any direction $(x, y) \in \Z^2$ with $y \leq 0$, so $X \times A_{\frac{\pi}{2}}^2(X)$ is not deterministic in any direction. However, it is countable and has the \PProperty in every direction.
\end{example}

\begin{example}
\label{ex:YYN}
We give an example of a downward deterministic uncountable SFT with the \PProperty. An indirect way to find such an example is to use Construction~\ref{con:CounterMachine}: Let $M$ be a nondeterministic counter machine with a single counter that it can choose to increment or preserve on each step, use Lemma~\ref{thm:Morita} to make it reversible, and plug the resulting machine in Construction~\ref{con:CounterMachine}. The resulting SFT is then deterministic in direction $(0, -1)$ and has the \PProperty, but it is uncountable as the choice of whether the counter is incremented or not at each step is visible in the configuration.

We also give a direct construction, which can be regarded as a `broken' version of Construction~\ref{con:CounterMachine}: There, a bouncing signal moves upward in an expanding cone, and its movement is extendably north deterministic. We did not make this choice of direction only because the counter machine intuitively computes in this direction. This example reverses the direction of determinism, making the SFT extendably south deterministic, and illustrates what can go wrong with this choice.

We construct the two-dimensional SFT $X$ by first defining a one-dimensional SFT $Y$ satisfying $\PS(X) \subset Y$ and then defining $X$ as the limit spacetime diagrams of a cellular automaton $f$ on $Y$, running downward. Denote $D = \{\leftarrow, \rightarrow\}$. The subshift $Y$ is defined by
\[ Y = \B^{-1}(0^*\ell^* D r^*1^*), \]
and $f$ is the cellular automaton that, denoting $\phi(a, d, b) = \INF 0 .\ell^a d r^b 1 \INF$ where $d \in D$, maps
\begin{align*}
f(\phi(a, \leftarrow, b)) = &\left\{\begin{array}{ll}
\phi(a - 2, \leftarrow, b + 1), & \mbox{if } a \geq 2, \\
\phi(0, \rightarrow, 0), & \mbox{if } a < 2 \wedge b = 0, \\
\phi(a, \rightarrow, b - 1), & \mbox{if } a < 2 \wedge b > 0
\end{array}\right. \\
f(\phi(a, \rightarrow, b)) = &\left\{\begin{array}{ll}
\phi(a + 2, \rightarrow, b - 3), & \mbox{if } b \geq 3, \\
\phi(a, \leftarrow, 0), & \mbox{if } b < 3
\end{array}\right.
\end{align*}
Note here that the dot in the definition of $\phi(a, d, b)$ marks the origin, so that $f$ does not move the left border of the segment $\ell^a d r^b$. The automaton $f$ so defined has minimal radius $2$.

The resulting SFT $X$ is clearly downward deterministic and has the downward \PProperty. It is also uncountable: every pattern $\phi(a, d, b)$ with $a, b \geq 3$ has a preimage of the same form, and following a preimage chain eventually leads to a pattern $\phi(a, d, b)$ with $b = 0$, which has multiple preimages.
\end{example}

\setlength{\tabcolsep}{3pt}
\renewcommand{\arraystretch}{1.3}
\begin{table}
\caption{The 8 different combinations of the properties determinism, countability and the \PProperty. The `yes' cases are emphasized.}
\begin{center}
\begin{tabular}{|c|c|c|c|}
\hline
Example & Deterministic & \CapPProperty & Countable \\
\hline
$\{0,1\}^{\Z^d}$ & \red{no} & \red{in no direction} & \red{no} \\
\hline
Example~\ref{ex:Grid} $\; \times$ Example~\ref{ex:Nondeterministic} & \red{no} & \red{in no direction} & \green{yes} \\
\hline
Example~\ref{ex:Nondeterministic} $\times \; \{x \;|\; x = \sigma^{(1,0)}(x)\}$ & \red{no} & \green{downward} & \red{no} \\
\hline
Example~\ref{ex:Nondeterministic} & \red{no} & \green{in every direction} & \green{yes} \\
\hline
$\{ x \;|\; x \in \{\sigma^{(1,0)}(x), \sigma^{(1,1)}(x)\} \}$ & \green{downward} & \red{in no direction} & \red{no} \\
\hline
Example~\ref{ex:Grid} & \green{southwest} & \red{in no direction} & \green{yes} \\
\hline
Example~\ref{ex:YYN} & \green{downward} & \green{downward} & \red{no} \\
\hline
$\{0\}^{\Z^2}$ & \green{in every direction} & \green{in every direction} & \green{yes} \\
\hline
\end{tabular}
\end{center}
\label{fig:Combinations}
\end{table}

An interesting property of SFTs that are both deterministic and have the \PProperty is that they, in a sense, come from the type of cellular automata studied in \cite{SaTo12c}. We state the following theorems only for downward determinism, as one can apply a transformation in $GL_2(\Z)$ to rotate the direction of determinism.

\begin{proposition}
Let $X \subset S^{\Z^2}$ be a deterministic countable SFT with the \PProperty. Then there is a cellular automaton $f$ on a countable one-dimensional SFT $Y \subset (S \dot\cup \{\#\})^\Z$ such that $X$ is conjugate to the set of limit spacetime diagrams of $f$ not containing $\#$.
\end{proposition}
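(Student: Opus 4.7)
The plan is to read off the subshift $Y$ from the \PProperty and the cellular automaton $f$ from the southward determinism of $X$. First I would apply the conjugacy provided by the \PProperty to replace $X$ by a conjugate with $\PS(X) \subseteq Y_0$ for some countable one-dimensional SFT $Y_0 \subseteq S^{\Z}$, and then perform a vertical higher-block recoding to ensure two further properties: (i) the forbidden patterns of $X$ all have shape $2 \times w$ for some fixed width $w$, so that membership in $X$ can be tested from adjacent rows alone; and (ii) southward determinism is realised by a one-row-ahead local function $F_0 : \B_{[-i, i]}(Y_0) \to S$, so that $x_{(k, j-1)} = F_0(x_{(k-i, j)}, \dots, x_{(k+i, j)})$ for every $x \in X$ and $(k, j) \in \Z^2$.

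Second, I would extend $F_0$ to a local rule $F$ on $(S \cup \{\#\})^{[-r, r]}$ of radius $r$ large enough to inspect the full $2 \times w$ window and to evaluate $F_0$ at every needed horizontal translate: the rule outputs $\#$ whenever the input contains a $\#$, fails to lie in $\B(Y_0)$, or assembles to a forbidden $2 \times w$ pattern of $X$ together with the candidates computed by $F_0$; otherwise it outputs the central $F_0$-value. This defines the candidate CA $f$. For the subshift $Y$ I would take the countable one-dimensional SFT $Y_0 \cup \{\INF\#^\infty\}$, obtained by adjoining $\#$ to $Y_0$ with the extra forbidden patterns $\{s\#, \#s : s \in S\}$. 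The delicate point is to force $f(Y) \subseteq Y$: the raw rule can produce $\#$ at isolated positions of a $Y_0$-row while leaving others in $S$, and such a mixed row is not in $Y$. I would address this by augmenting $F$ with a $\#$-propagation step so that any stray $\#$ spreads at a bounded speed per CA application, and argue that in any limit spacetime diagram every one of whose rows lies in $Y$, this propagation pulled backward upwards forces each row to be either $\#$-free (hence in $Y_0$) or identically $\#$. Arranging the propagation without destroying the countability of $Y$ is the main technical obstacle.

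Once these pieces are in place the conjugacy comes out readily. Every $x \in X$ yields a $\#$-free limit spacetime diagram since $x_j \in \PS(X) \subseteq Y_0$ and $x_{j-1} = F_0(x_j) = f(x_j)$ by southward determinism, with no forbidden $2 \times w$ pattern ever witnessed; conversely, every $\#$-free limit spacetime diagram $(y_j)$ has $y_j \in Y \cap S^{\Z} = Y_0$ and $y_{j-1} = F_0(y_j)$ with no forbidden $2 \times w$ pattern between consecutive rows, so $(y_j)$ lies in $X$ by the two-row description of $X$. The row-by-row identification is then the required block-map conjugacy, and all remaining verifications reduce to routine combinations of the vertical recoding and the one-row-ahead determinism rule.
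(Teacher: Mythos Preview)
The paper states this proposition without proof, so there is no argument in the text to compare against; your outline---pass to a conjugate with $\PS(X)\subseteq Y_0$ for a countable one-dimensional SFT $Y_0$, recode vertically so that determinism is one-row-ahead and the forbidden patterns are $2\times w$, then extend the local rule by sending illegal windows to $\#$---is the natural reading of the paper's informal remarks after the statement.

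The obstacle you flag, however, is a genuine gap and is not resolved by what you write. With $Y=Y_0\cup\{{}^\infty\#^\infty\}$ (forbidding both $s\#$ and $\#s$), your local rule can, for a single $y\in Y_0\setminus\PS(X)$, detect forbidden $2\times w$ blocks at several separated positions and output $\#$ there while outputting $S$-symbols elsewhere; the resulting row is not in $Y$, so $f$ is simply not a map $Y\to Y$, hence not a cellular automaton on $Y$. Your $\#$-propagation patch spreads $\#$ only across \emph{successive} applications of $f$, whereas the failure already occurs in one application; and enlarging $Y$ to absorb scattered $\#$s immediately makes it uncountable. The fallback ``in any limit spacetime diagram all rows lie in $Y$ anyway'' does not help either, since well-definedness of $f\colon Y\to Y$ is logically prior to forming spacetime diagrams. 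A workable repair has to exploit the structure of countable one-dimensional SFTs (Lemma~\ref{lem:UniqueParsing}): after adding the forbidden words of $Y_0$ to $X$ as height-$1$ constraints, one can prune $Y_0$ so that all of its periodic points already lie in $\PS(X)$, which forces the defective positions of any $y\in Y_0$ to sit in a single bounded window between the two periodic tails; the remaining work is to design $Y$ and the local rule so that this bounded defect is absorbed into a single $\#$-interval in one step while $Y$ stays a countable SFT. That argument is where the content lies, and your sketch does not supply it.
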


The need for $\#$ comes from the fact that our definition of determinism allows for SFTs where half-planes can be locally legal, while not having legal extensions (so a half-plane can have 0 or 1 extensions to the full plane). If we assume all half-planes extend to a unique configuration, we have the following more natural result.

\begin{proposition}
\label{prop:ConnectionLOL}
Let $X \subset S^{\Z^2}$ be an extendably deterministic countable SFT with the \PProperty. Then there is a cellular automaton $f$ on a countable one-dimensional SFT $Y \subset S$ such that $X$ is conjugate to the set of limit spacetime diagrams of $f$. Furthermore, the local function of $f$ is computable from the forbidden patterns of $X$.
\end{proposition}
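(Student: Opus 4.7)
The strategy is to thicken rows so that general south determinism becomes row-to-row determinism, after first arranging that both extendable south determinism and the \PProperty hold on a single conjugate SFT.

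First, I would construct a conjugate $X^*$ of $X$ (still countable, still extendably southward deterministic with some radius $i \geq 1$) whose rows satisfy $\PS(X^*) \subset Y_0$ for some countable 1D SFT $Y_0$. The \PProperty yields a conjugate with the SFT-containment of rows; extendable south determinism transfers by enlarging the alphabet of that conjugate to pack into each cell a window of values large enough that local legality in $X^*$ entails, under the pullback conjugacy, local legality in the original $X$. With $X^*$ in hand, extendable south determinism provides a local rule $F : \B_D(X^*) \to \A(X^*)$, where $D = [-i,i] \times [1,i]$, that is defined on every $D$-pattern containing no forbidden pattern of $X^*$.

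Second, I recode $X^*$ by grouping $i$ vertical cells into one: $\phi(x)_{(a,b)} = (x_{(a,b+k)})_{k=0}^{i-1} \in S^i$, giving a conjugate $X'$ in which the row at height $b$ encodes the horizontal strip $x|_{\Z \times [b, b+i-1]}$. Define $Y \subset (S^i)^\Z$ as the 1D SFT of tuples $(w^{(0)}, \ldots, w^{(i-1)})$ with each $w^{(k)} \in Y_0$ and whose stacking contains no forbidden pattern of $X^*$ of vertical extent at most $i$. Then $Y$ is an SFT (all constraints are local) and countable (since $Y \subset Y_0^i$), and $\PS(X') \subset Y$. Define $f : Y \to Y$ by viewing $w \in Y$ as an $i$-thick strip, discarding its top row, and appending at the bottom the row computed by iterating $F$ once; extendability guarantees that $F$ applies at each step and produces a new strip still locally legal, so $f(w) \in Y$.

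Finally, I would verify that $L(f) = X'$, which gives $X \cong X' = L(f)$. The inclusion $X' \subseteq L(f)$ is immediate from how $f$ was defined. For the reverse, any $z \in L(f)$ decodes naturally into $y \in S^{\Z^2}$ by reading the $i$ encoded rows from each cell of $z$; the $Y$-membership of every row together with the $f$-relation between consecutive rows forbids any forbidden pattern of $X^*$ from occurring in $y$, so $y \in X^*$ and hence $z = \phi(y) \in X'$. Computability of the local function of $f$ follows since $F$ can be enumerated from the forbidden patterns of $X^*$ (and ultimately of $X$) by listing locally legal $D$-patterns and recording their unique southward extensions.

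The main obstacle is the first step: simultaneously arranging extendable southward determinism and containment of rows in a countable SFT on a single conjugate. This is a careful bookkeeping of conjugacy radii and forbidden pattern sets; once settled, the remainder is routine.
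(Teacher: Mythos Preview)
The paper states this proposition without proof, so there is no argument to compare against directly. Your overall strategy is sound, and Step~1 is actually less of an obstacle than you suggest: extendable south determinism does transfer to conjugates. Once you know $\tilde X$ is south deterministic (which is automatic), a locally legal half-plane in $\tilde X$ can be pulled back via $\Psi^{-1}$ to a locally legal half-plane in $X$ (shifted by the radii), extended there, pushed forward by $\psi$, and then the boundary discrepancy is repaired using the south-deterministic local rule of $\tilde X$ itself. So your flagged ``main obstacle'' is a matter of bookkeeping, as you say.

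The genuine gap is elsewhere. You assert that ``extendability guarantees that $F$ applies at each step and produces a new strip still locally legal, so $f(w)\in Y$'', but this does not follow. Extendable south determinism concerns locally legal patterns on the full half-plane $H=\Z\times[1,\infty)$, not height-$i$ strips. A strip $w\in Y$ may be locally legal with each row in $Y_0$ and yet fail to be the restriction of any configuration of $X^*$, because it may admit no locally legal upward extension to $H$ (extendability says nothing about extending \emph{upward}). For such $w$ the $D$-windows need not lie in $\B_D(X^*)$, and even after extending $F$ arbitrarily there is no reason the computed row $w^{(-1)}$ lies in $Y_0$, nor that the new strip $(w^{(-1)},w^{(0)},\ldots,w^{(i-2)})$ avoids the height-$\le i$ forbidden patterns. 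In short, $f$ as you define it is a block map on $Y$, but you have not shown $f(Y)\subseteq Y$, which is exactly what is required for $f$ to be a cellular automaton on $Y$. Closing this gap --- either by arguing that under extendability every locally legal height-$i$ strip with rows in $Y_0$ is already globally valid, or by engineering $Y$ so that it is $f$-invariant --- is where the real work lies.
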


A similar result is true for SFTs without the \PProperty, but with $Y$ not necessarily countable. The constructions in this paper are deterministic only in the weaker sense, and thus do not directly come from cellular automata on countable SFTs. Furthermore, even if one restricts to the limit spacetime diagrams where $\#$ does not occur, the direction of determinism in our constructions is usually not a very interesting one: Most of our constructions are cones extending northeast from a seed pattern, and everything of interest happens within the cone. The direction of determinism, however, usually has a negative y-component. That is, the cellular automaton can only recreate a finite initial pattern from the infinite tail of the cone.

While our examples imply that all the combinations of determinism, countability and the \PProperty are possible, there are some interesting connections between them, as for instance, Lemma~\ref{lem:RProperty} holds for countable SFTs with the \PProperty and deterministic SFTs with the \PProperty for slightly different reasons, but not for general SFTs with the \PProperty.

\section{Cantor-Bendixson ranks and complexity of derivatives}

The two main constructions of this section concern the computational complexity of the $k$th derivative, and the upper bound of all attainable Cantor-Bendixson ranks.

Theorem~\ref{thm:DerivativesCanCompute} shows that there exists an SFT whose derivatives of order less than $\omega$ have maximal possible computational complexity. This is essentially Theorem~1 in \cite{SaTo12b}, although we make the \PProperty explicit, and determinize the construction.

Theorem~\ref{thm:BigRanks} is a slight improvement on Theorem~4.3 in \cite{JeVa11}, which states that Cantor-Bendixson ranks of countable two-dimensional SFTs are cofinal in the Cantor-Bendixson ranks of countable $\PI^0_1$ sets. More specifically, the theorem of \cite{JeVa11} states that for every countable $\PI^0_1$ set with Cantor-Bendixson rank $\lambda$, there is a countable SFT with rank $\lambda + 11$. Our addition is that we obtain the high ranks with deterministic SFTs having the \PProperty, and we lower the `simulation overhead' from $11$ to $4$. The construction in Theorem~4.3 in \cite{JeVa11} could be made deterministic, but counter machines are essential for the \PProperty. Furthermore, the \PProperty forces the binary track (which contains a point of the $\PI^0_1$ set) used in the proof of Theorem~\ref{thm:BigRanks} to be `simulated in software', while \cite{JeVa11} uses an actual tape.

Both constructions are subcases of a more general method of embedding computation in a countable two-dimensional SFT using counter machines, which we present before the actual results. The construction is used again later, in connection with subpattern posets. 

\begin{construction}[Embedding computations in a countable SFT with the \PProperty]
\label{con:CounterMachine}
Suppose we are given a nondeterministic counter machine $M = (k,\Sigma,\delta,q_0,q_f)$ with
\[ \delta \subset (\Sigma \times [1,k] \times \{Z,P\} \times \Sigma) \cup (\Sigma \times [1,k] \times \{-1,0,1\} \times \Sigma). \]
Further suppose that the set of infinite computations of $M$ is countable; note that the set of halting computations is countable for every counter machine, as such computations are finite. We construct a countable SFT $X$ with the \PProperty whose configurations correspond to computation histories of $M$ in a concrete way. A single row of $X$ consists of a computation zone, which contains segments of special counter symbols whose lengths encode the counter values, together with a `zig zag head' that holds a state of $M$. The head sweeps back and forth, updating its state and the counter values, and the computation area increases in size, so that $X$ will contain an infinite computation cone extending upwards. If $M$ is reversible, then $X$ will be south deterministic. However, as the counter machine is running upward, we describe how rows evolve bottom-up.

We define the SFT $X$ by first defining a set of legal rows $Y$, and then adding constraints that state which configurations may occur on top of each other (which may further reduce the set of legal rows). That is, we first define a subshift $Y$ such that $\PS(X) \subset Y$, and then describe a relation $R \subset Y^2$ which is a subSFT of $Y^2$ such that $X = \{ x \in S^{\Z^2} \;|\; \forall i: (x_i, x_{i+1}) \in R \}$.

We define
\[ \phi((m_\ell, h, m_r), (n_1, \ldots, n_k)) = \INF 0. (\ell^{m_\ell} h r^{m_r} 1 \INF \times C_1^{n_1} 1 \INF \times \cdots \times C_k^{n_k} 1 \INF), \]
where $m_\ell, m_r, n_i \in \N$, $h \in \{\leftz, \leftp, \rightarrow\} \times \Sigma \times [1,k] \times \{-1, 0, 1\}$. The interpretation is that $n_i$ is the value in the $i$th counter, the width of the computation area of the current row is $n_\ell + 1 + n_r$, and the zig zag head is represented by $h$. The interpretation of $h = (\rightarrow, q, i, d)$ is that the zig zag head is moving to the right in state $q$ in order to reach the right border of the computation zone. The interpretation of $h = (\stackrel{b}{\leftarrow}, s, i, d)$ is that the zig zag head is moving left, and is trying to increment the value of counter $i$ by $b \cdot d$. The subshift $Y$ is the orbit closure of the image of $\phi$ and configurations
\[ \INF 0. (1 \INF \times C_1^0 1 \INF \times \cdots \times C_k^0 1 \INF), \]
where computation has not started yet.

For the configurations $x = \INF 0. (1 \INF \times C_1^0 1 \INF \times \cdots \times C_k^0 1 \INF)$ where computation has not started yet, we allow $(x, x) \in R$ and $(x, y) \in R$ for
\[ y = \phi((0, h, 0), (0, \ldots, 0)), \]
where $h = (\leftz, q_0, 1, 0)$. The choice of the content of $h$ here (other than the initial state $q_0$) is somewhat arbitrary.

Configurations $\phi((m_\ell, h, m_r), (n_1, \ldots, n_k))$ are followed by configurations of the same form. Note in particular that the left border of the computation area does not move. If $h = (\rightarrow, q, i, d)$, the zig zag head is moving to the right, so the values $(m_\ell, h, m_r)$ are updated to $(m_\ell+2, h, m_r-1)$ if $m_r \geq 1$, and to $(m_\ell, (\leftp, q, i, d), m_r+1)$ if $m_r = 0$. The counter values are not changed.

If $h = (\stackrel{b}{\leftarrow}, q, i, d)$, the zig zag head is moving to the left, so the values $(m_\ell, h, m_r)$ are updated to $(m_\ell-1, h', m_r+2)$ if $m_{\ell} \geq 1$, and $(m_\ell, h'', m_r+1)$ if $m_\ell = b = 0$. The case $m_\ell = 0, b = 1$ leads to an error. Here, $h' = h$ if $n_i \neq m_\ell - 1$, and $h' = (\leftz, q, i, d)$ otherwise. If $n_i = m_\ell - 1$, the value $n_i$ is also updated to $n_i + b \cdot d$. Thus the bit $b$ prevents the zig zag head from updating the same counter multiple times. The element $h''$ can be chosen as $(\rightarrow, p, 1, 0)$ if $(q,j,Z,p) \in \delta$ and $n_j = 0$, or $(q,j,P,p) \in \delta$ and $n_j > 0$. Also, $h''$ can be chosen as $(\rightarrow, p, j, d')$ if $(q,j,d',p) \in \delta$ with $d' \in \{-1,0,1\}$.

We remark here that, in contrast to Example~\ref{ex:YYN}, the movement of the head is north deterministic, so the possible uncountability of $X$ can only be the result of $M$ having uncountably many computation histories.

The beginning of a computation is shown in Figure~\ref{fig:CounterComputing}.

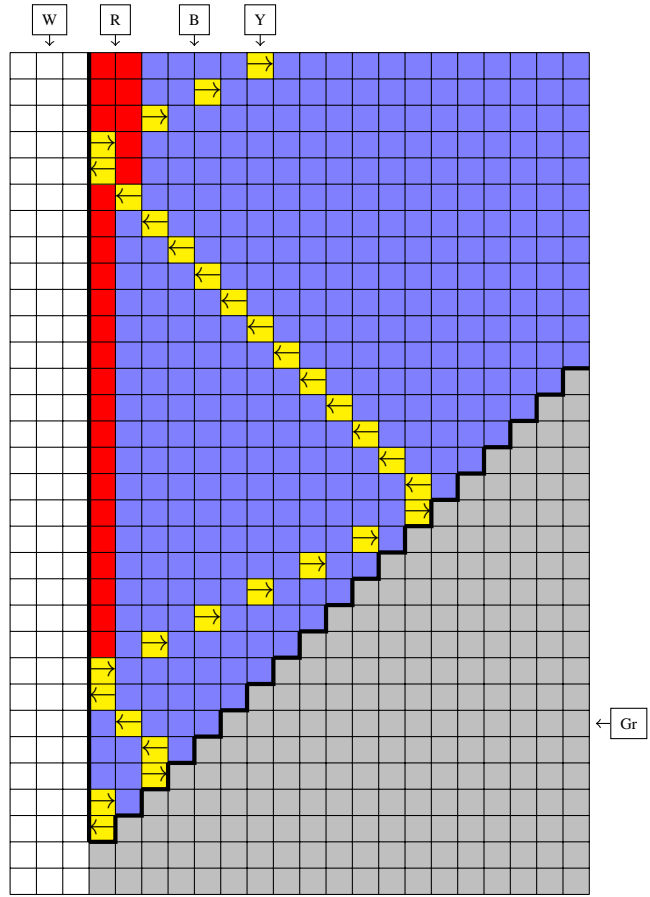
\begin{figure}[ht]
\begin{center}
\begin{tikzpicture}[scale=0.35]

\fill[color=blue!50!white] (4,12) rectangle (10,21);
\fill[color=yellow] (12,18) rectangle (13,19);
\node () at (12.5,18.5) {$\leftarrow$};
\fill[color=blue!50!white] (8,23) rectangle (9,32);
\fill[color=gray!50!white] (7,0) rectangle (22,6);
\fill[color=blue!50!white] (10,21) rectangle (22,32);
\fill[color=blue!50!white] (6,5) rectangle (7,12);
\fill[color=gray!50!white] (18,6) rectangle (22,17);
\fill[color=gray!50!white] (5,0) rectangle (7,4);
\fill[color=gray!50!white] (4,0) rectangle (5,3);
\fill[color=blue!50!white] (14,13) rectangle (15,16);
\fill[color=blue!50!white] (13,18) rectangle (20,21);
\fill[color=blue!50!white] (10,9) rectangle (11,20);
\fill[color=blue!50!white] (4,7) rectangle (6,9);
\fill[color=blue!50!white] (5,30) rectangle (7,32);
\fill[color=blue!50!white] (6,25) rectangle (8,30);
\fill[color=gray!50!white] (8,6) rectangle (18,7);
\fill[color=yellow] (5,29) rectangle (6,30);
\node () at (5.5,29.5) {$\rightarrow$};
\fill[color=blue!50!white] (7,11) rectangle (9,12);
\fill[color=blue!50!white] (8,8) rectangle (10,11);
\fill[color=blue!50!white] (4,9) rectangle (5,12);
\fill[color=blue!50!white] (9,22) rectangle (10,31);
\fill[color=yellow] (9,31) rectangle (10,32);
\node () at (9.5,31.5) {$\rightarrow$};
\fill[color=blue!50!white] (12,19) rectangle (13,21);
\fill[color=red] (3,9) rectangle (4,27);
\fill[color=yellow] (7,30) rectangle (8,31);
\node () at (7.5,30.5) {$\rightarrow$};
\fill[color=yellow] (5,9) rectangle (6,10);
\node () at (5.5,9.5) {$\rightarrow$};
\fill[color=blue!50!white] (15,16) rectangle (18,18);
\fill[color=blue!50!white] (11,14) rectangle (14,17);
\fill[color=blue!50!white] (5,10) rectangle (6,12);
\fill[color=yellow] (7,23) rectangle (8,24);
\node () at (7.5,23.5) {$\leftarrow$};
\fill[color=blue!50!white] (5,26) rectangle (6,29);
\fill[color=yellow] (4,26) rectangle (5,27);
\node () at (4.5,26.5) {$\leftarrow$};
\fill[color=yellow] (15,14) rectangle (16,15);
\node () at (15.5,14.5) {$\rightarrow$};
\fill[color=yellow] (3,7) rectangle (4,8);
\node () at (3.5,7.5) {$\leftarrow$};
\fill[color=red] (4,27) rectangle (5,32);
\fill[color=gray!50!white] (12,7) rectangle (18,11);
\fill[color=blue!50!white] (4,21) rectangle (8,23);
\fill[color=red] (3,29) rectangle (4,32);
\fill[color=blue!50!white] (12,11) rectangle (13,14);
\fill[color=gray!50!white] (21,17) rectangle (22,20);
\fill[color=yellow] (5,25) rectangle (6,26);
\node () at (5.5,25.5) {$\leftarrow$};
\fill[color=yellow] (10,20) rectangle (11,21);
\node () at (10.5,20.5) {$\leftarrow$};
\fill[color=yellow] (14,16) rectangle (15,17);
\node () at (14.5,16.5) {$\leftarrow$};
\fill[color=yellow] (4,6) rectangle (5,7);
\node () at (4.5,6.5) {$\leftarrow$};
\fill[color=blue!50!white] (11,10) rectangle (12,12);
\fill[color=blue!50!white] (11,17) rectangle (12,19);
\fill[color=gray!50!white] (16,11) rectangle (18,15);
\fill[color=yellow] (3,3) rectangle (4,4);
\node () at (3.5,3.5) {$\rightarrow$};
\fill[color=blue!50!white] (7,6) rectangle (8,10);
\fill[color=blue!50!white] (4,23) rectangle (5,26);
\fill[color=blue!50!white] (7,24) rectangle (8,25);
\fill[color=yellow] (3,28) rectangle (4,29);
\node () at (3.5,28.5) {$\rightarrow$};
\fill[color=gray!50!white] (17,15) rectangle (18,16);
\fill[color=gray!50!white] (3,0) rectangle (4,2);
\fill[color=yellow] (11,19) rectangle (12,20);
\node () at (11.5,19.5) {$\leftarrow$};
\fill[color=blue!50!white] (20,19) rectangle (21,21);
\fill[color=blue!50!white] (21,20) rectangle (22,21);
\fill[color=blue!50!white] (4,3) rectangle (5,6);
\fill[color=yellow] (3,2) rectangle (4,3);
\node () at (3.5,2.5) {$\leftarrow$};
\fill[color=yellow] (13,17) rectangle (14,18);
\node () at (13.5,17.5) {$\leftarrow$};
\fill[color=gray!50!white] (10,7) rectangle (12,9);
\fill[color=gray!50!white] (14,11) rectangle (16,13);
\fill[color=yellow] (6,24) rectangle (7,25);
\node () at (6.5,24.5) {$\leftarrow$};
\fill[color=yellow] (3,8) rectangle (4,9);
\node () at (3.5,8.5) {$\rightarrow$};
\fill[color=yellow] (5,5) rectangle (6,6);
\node () at (5.5,5.5) {$\leftarrow$};
\fill[color=yellow] (8,22) rectangle (9,23);
\node () at (8.5,22.5) {$\leftarrow$};
\fill[color=gray!50!white] (19,17) rectangle (21,18);
\fill[color=yellow] (7,10) rectangle (8,11);
\node () at (7.5,10.5) {$\rightarrow$};
\fill[color=gray!50!white] (9,7) rectangle (10,8);
\fill[color=gray!50!white] (11,9) rectangle (12,10);
\fill[color=blue!50!white] (5,6) rectangle (6,7);
\fill[color=blue!50!white] (18,17) rectangle (19,18);
\fill[color=yellow] (9,21) rectangle (10,22);
\node () at (9.5,21.5) {$\leftarrow$};
\fill[color=blue!50!white] (11,13) rectangle (12,14);
\fill[color=yellow] (15,15) rectangle (16,16);
\node () at (15.5,15.5) {$\leftarrow$};
\fill[color=gray!50!white] (13,11) rectangle (14,12);
\fill[color=blue!50!white] (7,31) rectangle (8,32);
\fill[color=yellow] (5,4) rectangle (6,5);
\node () at (5.5,4.5) {$\rightarrow$};
\fill[color=blue!50!white] (3,4) rectangle (4,7);
\fill[color=blue!50!white] (14,17) rectangle (15,18);
\fill[color=yellow] (9,11) rectangle (10,12);
\node () at (9.5,11.5) {$\rightarrow$};
\fill[color=blue!50!white] (5,23) rectangle (6,25);
\fill[color=blue!50!white] (12,17) rectangle (13,18);
\fill[color=blue!50!white] (16,15) rectangle (17,16);
\fill[color=blue!50!white] (13,12) rectangle (14,13);
\fill[color=yellow] (3,27) rectangle (4,28);
\node () at (3.5,27.5) {$\leftarrow$};
\fill[color=gray!50!white] (20,18) rectangle (21,19);
\fill[color=blue!50!white] (6,23) rectangle (7,24);
\fill[color=gray!50!white] (15,13) rectangle (16,14);
\fill[color=yellow] (11,12) rectangle (12,13);
\node () at (11.5,12.5) {$\rightarrow$};
\fill[color=blue!50!white] (8,21) rectangle (9,22);
\fill[color=gray!50!white] (6,4) rectangle (7,5);
\fill[color=blue!50!white] (11,20) rectangle (12,21);
\fill[color=blue!50!white] (8,7) rectangle (9,8);
\fill[color=yellow] (13,13) rectangle (14,14);
\node () at (13.5,13.5) {$\rightarrow$};
\draw (0,0) grid (22,32);

\draw[ultra thick] (3,2) -- (3,32);
\foreach \x in {3,4,5,...,20}{
	\draw[ultra thick] (\x,\x-1) -- +(1,0) -- +(1,1);
}
\draw[ultra thick] (21,20) -- +(1,0);

\draw[->] (1.5,33.25) -- (1.5,32.25);
\picnote{(1.5,33.25)}{W};
\draw[->] (4,33.25) -- (4,32.25);
\picnote{(4,33.25)}{R};
\draw[->] (7,33.25) -- (7,32.25);
\picnote{(7,33.25)}{B};
\draw[->] (9.5,33.25) -- (9.5,32.25);
\picnote{(9.5,33.25)}{Y};
\draw[->] (23.5,6.5) -- (22.25,6.5);
\picnote{(23.5,6.5)}{Gr};

\end{tikzpicture}
\end{center}
\caption{(A sofic projection of) the base of the computation cone. The limits of the cone are indicated by thick lines, and the zig-zag head by a yellow tile with an arrow indicating its direction. The red counter is incremented in the first two computation steps.}
\label{fig:CounterComputing}
\end{figure}

We claim that there are only countably many configurations in $X$. Namely, if one of the rows is in the orbit of $\phi((m_\ell, h, m_r), (n_1, \ldots, n_k))$ for finite $m_\ell$ and $m_r$, then the configuration contains a full computation cone, and thus corresponds to an infinite computation of $M$. If none of the rows are of this form, then the configuration contains at most one back-and-forth sweep of the zig zag head, and it is easy to see that there are only countably many such degenerate configurations.

Finally, south determinism is proved as follows if $M$ is reversible. First, the movement of the zig zag head is reversible and independent of $M$. It is a simple case analysis that the operation of updating a counter during the sweep is reversible. Consider then the step where the zig zag head is situated at the left border of the computation cone, and nondeterministically chooses a transition from $\delta$. The inverse step can be deterministically chosen depending on which counters contain the value $0$.
\end{construction}

In \cite{JeVa11}, a similar encoding of `computation in a cone' is used, but instead of counter machines, computation histories of Turing machines are embedded into configurations of countable SFTs. While both approaches have their merits, we feel that it is slightly more obvious how the counter machine construction works and why the resulting subshift is countable. Also, in our approach, the resulting subshift has the \PProperty.

Next, we study the computational power of the $k$th derivative of a countable two-dimensional SFT, which turns out to possibly climb very high in the arithmetical hierarchy. We start with an upper bound, which we then reach with a construction. A generalization of the following lemma was proved in \cite[Lemma 1.2 (3)]{CeClSmSoWa86}, but we include a proof for completeness.

\begin{lemma}
\label{lem:IncByTwo}
Given a two-dimensional $\PI^0_k$ subshift $X$ and a pattern $P$, it is $\PI^0_{k+2}$ whether $P \sqsubset X^{(1)}$.
\end{lemma}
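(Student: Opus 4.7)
The plan is to rewrite ``$P \sqsubset X^{(1)}$'' as a statement about finite patterns of $X$ and then read off the quantifier alternations. First, I would use shift-invariance of $X^{(1)}$ to reduce the condition to the existence of some $x \in X^{(1)}$ satisfying $x|_D = P$ at the origin, where $D$ is the domain of $P$. Setting $V = \{ x \in X \mid x|_D = P \}$, which is a clopen subspace of $X$, Lemma~\ref{lem:OpenDerivative} gives $V \cap X^{(1)} = V^{(1)}$, so the question becomes whether $V^{(1)} \neq \emptyset$. Since $V$ is compact and Hausdorff, $V^{(1)} = \emptyset$ is equivalent to every point of $V$ being isolated, which in a compact space forces $V$ to be finite. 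Hence the predicate reduces to ``$V$ is infinite''.

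Next, I would express ``$V$ is infinite'' in terms of finite data via the standard compactness argument. The claim is that $|V| \geq n$ iff there exist a finite window $E \supseteq D$ and $n$ pairwise distinct patterns $Q_1, \dots, Q_n$ on $E$ with $Q_i|_D = P$ and each $Q_i \in \B(X)$. In one direction, given $n$ distinct configurations of $V$ one restricts them to a window $E$ large enough to separate them. In the other direction, each $Q_i \in \B(X)$ extends, by shift-invariance of $X$, to a configuration containing $Q_i$ at the origin, which lies in $V$ because $Q_i|_D = P$, and distinctness is preserved since the $Q_i$ already differ.

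Finally, I would read off the complexity. By the paper's conventions, ``$Q \in \B(X)$'' is a $\PI^0_k$ predicate, and a finite conjunction of such predicates remains $\PI^0_k$; the remaining side conditions (``$Q_i|_D = P$'' and pairwise distinctness) are decidable. The whole formula therefore takes the shape
\[ \forall n \ \exists (E, Q_1, \dots, Q_n) : \varphi, \]
with $\varphi \in \PI^0_k$, which lies in $\forall \SIGMA^0_{k+1} = \PI^0_{k+2}$. The only step that requires more than bookkeeping is the compactness-based equivalence between ``$V$ infinite'' and the finite-window witness formulation; I expect that to be the main, but still standard, obstacle, with everything else being routine unpacking of definitions.
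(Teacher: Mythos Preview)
Your proof is correct and takes essentially the same approach as the paper: both reduce the question to a $\forall n\,\exists(\text{patterns})$ statement about $\B(X)$, yielding a $\PI^0_{k+2}$ formula. The only cosmetic difference is that the paper characterizes $P \sqsubset X^{(1)}$ by ``for every $n$ there exist \emph{two} distinct extensions of $P$ in $\B(X)$ agreeing on the $n\times n$ window around $P$'' (a limit-point formulation), whereas you go through $V^{(1)}\neq\emptyset \iff V$ infinite $\iff$ ``for every $n$ there exist $n$ distinct extensions in $\B(X)$''; in a compact metric space these are equivalent, and the resulting quantifier structure is identical.
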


\begin{proof}
Given $X$ and $P$, we have $P \sqsubset X^{(1)}$ iff $P \sqsubset X$ and for all $n \in \N$, there exist two distinct equal-sized extensions $Q_1, Q_2 \sqsubset X$ of $P$ that agree on the $(n \times n)$-square around $P$. This is clearly $\PI^0_{k+2}$. 
\end{proof}

The following construction shows that the bound given by Lemma~\ref{lem:IncByTwo} on the complexity of $k$th derivatives of $\PI^0_1$ subshifts is strict, and can be attained by a single deterministic countable SFT with the \PProperty. In particular, it implies that Proposition~\ref{prop:1DSoficNiceness} is far from true in dimension two, since two-dimensional sofic shifts are $\PI^0_1$, while their derivatives may be $\PI^0_3$-complete, and thus highly nonsofic. The rank of the subshift we build will be $\omega + k$ for some finite $k$. We start with a definition, and a classical recursion-theoretic lemma.

\begin{definition}
For $k \in \N$, denote by $\Phi_k$ the set of first-order arithmetical formulas with $k$ free variables and only bounded quantifiers. For $k, \ell \in \N$, denote by $\phi^k_\ell$ the $\ell$th formula in $\Phi_k$, ordered first by length and then lexicographically.
\end{definition}

\begin{lemma}[Lemma 2 in \cite{KrShWa60}]
\label{lemma:Infty}
Let $k \in \N$ and $\phi \in \Phi_{2k+1}$. Then there exists $\psi \in \Phi_{k+1}$, uniformly computable from $\phi$ and $k$, such that
\[ \forall n_1 : \exists n_2 : \cdots \forall n_{2k-1} : \exists n_{2k} : \forall n_{2k+1} : \phi(n_1, \ldots, n_{2k+1}) \]
is equivalent to
\[ \exists^\infty n_1 : \exists^\infty n_2 : \cdots \exists^\infty n_k : \forall n_{k+1} : \psi(n_1, \ldots, n_{k+1}). \]
\end{lemma}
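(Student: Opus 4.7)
The plan is to proceed by induction on $k$. The base case $k=0$ is immediate, with $\psi=\phi$.

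For the inductive step, the strategy is to peel off the outermost $\forall n_1\,\exists n_2$ alternation and replace it with a single $\exists^\infty$ quantifier, then apply the inductive hypothesis to the remaining $\PI^0_{2k-1}$ subformula. Concretely, I would establish the auxiliary equivalence
\[
\forall n\,\exists m\,\chi(n,m,\vec{y})\;\iff\;\exists^\infty m\,\chi^\sharp(m,\vec{y}),
\]
where $\chi^\sharp(m,\vec{y})$ asserts, via a bounded existential search $\exists p\leq m$, that $m$ bounds genuine Skolem witnesses for all inputs $n$ up to a growing initial segment. A preliminary monotonization (replacing $\chi(n,p,\vec{y})$ by $\exists p'\leq p\,\chi(n,p',\vec{y})$) makes the set of witnesses upward closed in $p$, and the pigeonhole principle over the finite candidate set $\{p\leq m\}$ then upgrades a successful bounded search into an actual Skolem witness.

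Iterating this single-step reduction and pairing the universal parameter arising from the unfolding $\exists^\infty m\equiv\forall N\,\exists m>N$ with the existential variables produced by the inductive hypothesis yields the desired normal form $\exists^\infty m_1\cdots\exists^\infty m_k\,\forall n_{k+1}\,\psi$. The formula $\psi$ stays within $\Phi_{k+1}$ because the monotonization, the bounded existential search and the recursive pairings all use only quantifiers bounded by the available variables, and the inductive hypothesis guarantees that the intermediate subformula has the prescribed bounded shape.

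The main obstacle I foresee is the selection of the ``initial segment'' parameter in $\chi^\sharp$: the least Skolem function for a $\Phi_{2k+1}$-formula is in general $\PI^0_2$-definable but not recursive, so no single bounded-arithmetic slow-growing function can serve uniformly for every $\phi$. Uniform computability of $\psi$ from $\phi$ is recovered by using the freedom that the construction may inspect $\phi$ itself, so the encoding in $\chi^\sharp$ can incorporate $\phi$-specific slowdowns; verifying that such a choice still keeps $\psi$ inside $\Phi_{k+1}$ and makes both directions of the equivalence go through for every input $\phi$ is the delicate combinatorial step, and is precisely where the classical argument of \cite{KrShWa60} carries the bulk of the technical weight.
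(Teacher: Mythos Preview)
The paper does not provide its own proof of this lemma: it is stated as ``Lemma~2 in \cite{KrShWa60}'' and used as a black box, with no argument given beyond the citation. There is therefore nothing in the paper to compare your proposal against.

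As for the proposal itself, the overall inductive strategy (collapse one $\forall\exists$ block into one $\exists^\infty$ at each step) is the right shape, and you are honest about where the difficulty lies. However, your sketch of the key single-step reduction is not yet a proof. The natural candidates for $\chi^\sharp$ you gesture at do not make the reverse implication go through: infinitely many $m$ satisfying a bounded search condition can all witness the same finite initial segment of inputs, so $\exists^\infty m\,\chi^\sharp(m,\vec y)$ does not by itself force $\forall n\,\exists m\,\chi(n,m,\vec y)$. The standard fix is to make $\chi^\sharp(m,\vec y)$ assert that $m$ codes a finite sequence $\langle b_0,\dots,b_{m'}\rangle$ of witnesses and that $\phi(i,b_i,\vec y)$ holds for each $i\le m'$, together with a device ensuring that distinct $m$'s force distinct lengths $m'$ (e.g.\ tying $m'$ to $m$ via the coding). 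You correctly identify this as ``the delicate combinatorial step'' and defer to \cite{KrShWa60}; that is fine, but as written your $\chi^\sharp$ is underspecified and your remark about ``$\phi$-specific slowdowns'' is not the actual mechanism used. Since the paper itself simply cites the result, filling in these details is not required here.
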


We denote $\psi = I(\phi)$ in the above lemma. With this result, we can transform alternating quantifiers into infinitary ones, and the application to derivatives is rather straightforward.

\begin{theorem}
\label{thm:DerivativesCanCompute}
There exists a deterministic countable two-dimensional SFT $X$ with the \PProperty such that the problem whether $P \sqsubset X^{(k)}$ for a given pattern $P$ is $\PI^0_{2k+1}$-complete, for all $k \in \N$.
\end{theorem}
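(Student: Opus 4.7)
The plan is to prove matching upper and lower bounds. For the upper bound, induct on $k$: the base case $k = 0$ is that $P \sqsubset X$ is $\PI^0_1$ for an SFT, and Lemma~\ref{lem:IncByTwo} provides the inductive step, so $P \sqsubset X^{(k)}$ is $\PI^0_{2k+1}$ for every $k$.

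For $\PI^0_{2k+1}$-hardness with a single SFT $X$ independent of $k$, fix for each $k$ a $\PI^0_{2k+1}$-complete set $H_k$. By Lemma~\ref{lemma:Infty}, $n \in H_k$ is equivalent to
\[ \exists^\infty m_1 : \cdots : \exists^\infty m_k : \forall m_{k+1} : \psi_k(n, m_1, \ldots, m_{k+1}) \]
for a $\Phi_{k+2}$ formula $\psi_k$ uniformly computable from $n$ and $k$. I will design a universal nondeterministic counter machine $M$ that reads an input pair $(n, k)$ from a finite seed and then proceeds in phases. For $i = 1, \ldots, k$, the machine runs a \emph{guess phase} for $m_i$: it repeatedly increments a dedicated counter, at each iteration nondeterministically choosing either to continue or to commit the current value. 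After all $k$ guesses, it enters a deterministic \emph{verification phase}, enumerating $m_{k+1} = 0, 1, 2, \ldots$ and evaluating the bounded-quantifier formula $\psi_k(n, m_1, \ldots, m_{k+1})$, halting exactly when the value is false. Thus $M$ has an infinite computation on $(n, k)$ iff $n \in H_k$, and the set of infinite computations is countable since only $k$ nondeterministic guesses are made. Applying Lemma~\ref{thm:Morita} to $M$ (its conclusion survives in the nondeterministic setting and preserves countability of infinite computations), then plugging the resulting reversible machine into Construction~\ref{con:CounterMachine}, I obtain a deterministic countable SFT $X$ with the \PProperty. The desired computable reduction is $n \mapsto P(n)$, where $P(n)$ is the seed pattern initialising the machine with input $(n, k)$.

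The key claim, proved by induction on $j$, is that for a configuration $x \in X$ whose cone is initialised with input $(n, k)$, we have $x \in X^{(j)}$ iff the sequence of commitments recorded in $x$ up to the $(k-j)$th guess can be extended to a nested family of infinitely many valid alternatives: infinitely many admissible values of $m_{k-j+1}$, each admitting infinitely many valid $m_{k-j+2}$, and so on through $m_k$, with the verification phase never halting. Specialised to a configuration in which the machine has only just consumed the seed, this yields $P(n) \sqsubset X^{(k)}$ iff the displayed infinitary formula holds, i.e. iff $n \in H_k$. The main obstacle is verifying this rank analysis cleanly. The upward direction is a compactness/K\"onig argument inside the cone, constructing representative configurations at each rank level from the nested infinitary witnesses. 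The downward direction uses the fact that during the $i$th guess phase the current counter length is visible in every intersecting row, so two configurations that agree on a sufficiently large initial portion of the cone must agree on all commitments preceding the last active guess; hence the only source of non-isolation at depth $j$ is the existence of infinitely many valid alternatives at the $(k-j+1)$th guess, matching the inductive hypothesis. The \PProperty is essential here, as it rules out hidden signals that could drive non-isolation independently of the visible counter values, so the arithmetical complexity of rank in $X$ corresponds exactly to the nested infinitary structure of valid computations of $M$.
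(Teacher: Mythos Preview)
Your approach is essentially the paper's: use Lemma~\ref{lemma:Infty} to pass to nested $\exists^\infty$ quantifiers, build a nondeterministic counter machine that guesses the witnesses one by one and then verifies the inner $\forall$, reversibilize via Lemma~\ref{thm:Morita}, and apply Construction~\ref{con:CounterMachine}; your rank analysis matching membership in $X^{(j)}$ with $j$ layers of $\exists^\infty$ is exactly the mechanism the paper invokes (more tersely). Two small slips to fix: Construction~\ref{con:CounterMachine} has no external input, so $(n,k)$ must be \emph{guessed} by $M$ and $P(n)$ is the cone pattern recording that guess (this is precisely what the paper does with its parameters $(k,\ell)$); and the sentence ``$M$ has an infinite computation on $(n,k)$ iff $n\in H_k$'' is false as stated, since $M$ can always loop forever incrementing $m_1$---the correct statement is the one you give next, that the seed pattern survives $k$ derivatives iff $n\in H_k$. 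Finally, your closing remark that the \PProperty is ``essential'' to the rank argument misattributes the work: the \PProperty is a \emph{consequence} of Construction~\ref{con:CounterMachine}, not a hypothesis; what drives the rank computation is that the cone configurations with a fixed base form an open set (so Lemma~\ref{lem:OpenDerivative} applies) and each is determined by its finite sequence of nondeterministic choices.
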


\begin{proof}
As an illustration of the idea, consider the closure of the subset of $\{0,1\}^\N$ consisting of points of the form
\[ 0^\ell 1 0^k 1 0^{n_1} 1 0^{n_2} \cdots 0^{n_k} 1 0 \INF, \]
where $I(\phi_l^{2k+1})(n_1, n_2, \ldots, n_k, n_{k+1})$ is true for all $n_{k+1}$. This set is $\PI^0_1$-complete. Clearly, the derivative of this closed set contains only those points of the form
\[ 0^\ell 1 0^k 1 0^{n_1} 1 0^{n_2} \cdots 0^{n_{k-1}} 1 0^\infty, \]
where $I(\phi_\ell^{2k+1})(n_1, n_2, \ldots, n_{k-1}, n_k, n_{k+1})$ holds for infinitely many $n_k$ and all $n_{k+1}$. Thus the derivative is $\PI^0_3$-complete, and we could verify by induction that the $n$th derivative is $\PI^0_{2n+1}$-complete. The construction of $X$ is an implementation of the same idea using a suitable counter machine and Construction~\ref{con:CounterMachine}.

\begin{algorithm}
\caption{The program of the counter machine $M$}\label{alg:CounterGuess}
\begin{algorithmic}[1]

\State $k \gets -1$
\Repeat
	\State $k \gets k + 1$
	\State \Choose{$b \in \{0, 1\}$}
\Until{$b = 1$}
\State $\ell \gets -1$
\Repeat
	\State $\ell \gets \ell + 1$
	\State \Choose{$b \in \{0, 1\}$}
\Until{$b = 1$}
\ForAll{$i \in \{1, \ldots, k\}$}
	\State $n_i \gets -1$
	\Repeat
		\State $n_i \gets n_i + 1$
		\State \Choose{$b \in \{0, 1\}$} \label{ln:ChooseB}
	\Until{$b = 1$}
\EndFor
\ForAll{$n_{k+1} \in \N$}
	\If{\textbf{not} $I(\phi^{2k+1}_\ell)(n_1, \ldots, n_{k+1})$}
		\State \textbf{reject}
	\EndIf
\EndFor

\end{algorithmic}
\end{algorithm}

Let $M$ be a two-input counter machine that operates as per Algorithm~\ref{alg:CounterGuess}. The machine $M$ simply guesses the parameters $k$ and $\ell$, then guesses the $k$ numbers $n_1, \ldots, n_k$, and finally checks that $I(\phi_\ell^{2k+1})(n_1, n_2, \ldots, n_{k-1}, n_k, n_{k+1})$ holds for all $n_{k+1} \in \N$. By Lemma~\ref{thm:Morita}, we may assume $M$ is reversible. Note that the set of infinite computations of $M$ is countable, since the choice $b = 1$ can be made only finitely many times during a computation. We plug $M$ in Construction~\ref{con:CounterMachine} to obtain the corresponding countable deterministic SFT $X$ with the \PProperty. Even after applying Lemma~\ref{thm:Morita}, the nondeterministic guesses of $M$ are visible in the SFT, so a finite initial part of a computation cone where the parameters $k$ and $\ell$ have been chosen occurs in $X^{(k)}$ iff
\[ \exists^\infty n_1 : \cdots \exists^\infty n_k : \forall n_{k+1} : I(\phi^{2k+1}_l)(n_1, \ldots, n_{k+1}) \]
is true. But by Lemma~\ref{lemma:Infty}, this is equivalent to
\[ \forall n_1 : \exists n_2 : \cdots \forall n_{2k-1} : \exists n_{2k} : \forall n_{2k+1} : \phi^{2k+1}_l(n_1, \ldots, n_{2k+1}), \]
and thus the subshift $X^{(k)}$ is $\PI^0_{2k+1}$-hard in the sense of the claim. Since it reaches the upper bound given by Lemma~\ref{lem:IncByTwo}, it is actually $\PI^0_{2k+1}$-complete.
\end{proof}

Next, we prove the result that deterministic countable SFTs with the \PProperty can have arbitrarily high computable Cantor-Bendixson ranks. We use both Lemma~\ref{thm:Morita} and Lemma~\ref{thm:Morita2} to make the construction deterministic, and to optimize the simulation overhead to $4$ (from the value $11$ in Theorem 4.3 of \cite{JeVa11}). Even without these optimization steps for the counter machine, we would obtain arbitrarily high computable Cantor-Bendixson ranks with countable SFTs with the \PProperty.

\begin{theorem}
\label{thm:BigRanks}
For any countable $\PI^0_1$ set $S \subset \{0, 1\}^\N$, there is a countable deterministic SFT $X$ with the \PProperty for which $\rank(X) = \rank(S) + 4$ holds.
\end{theorem}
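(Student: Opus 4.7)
The plan is to adapt Construction~\ref{con:CounterMachine} to the set $S$, in the spirit of Theorem~\ref{thm:DerivativesCanCompute}. Since $S$ is a countable $\PI^0_1$ subset of $\{0,1\}^\N$, I can write $S = [T]$ for some recursive tree $T \subseteq \{0,1\}^*$. First I would build a counter machine $M$ that nondeterministically generates an infinite binary sequence bit by bit: at each round it nondeterministically guesses the next bit $b \in \{0,1\}$, updates a binary encoding of the current prefix stored in one of its counters (by doubling and optionally incrementing), and then runs the decision procedure for $T$ on the new prefix, rejecting if the prefix has left $T$. The infinite computations of $M$ are then in a natural $(1{:}1)$-correspondence with the infinite branches of $T$, that is, with the points of $S$. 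I would then apply Lemma~\ref{thm:Morita} and Lemma~\ref{thm:Morita2} to replace $M$ by a reversible two-counter machine $M'$, which preserves both this correspondence and the countability of the infinite computations, and feed $M'$ into Construction~\ref{con:CounterMachine} to obtain a deterministic countable SFT $X$ with the \PProperty.

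The Cantor-Bendixson analysis will rest on the structure of $X$: each configuration is either a \emph{cone configuration} containing a full upward-growing computation cone with vertex at some $\vec v \in \Z^2$ and encoding some $x \in S$, or one of countably many \emph{degenerate configurations} which contain at most one back-and-forth sweep of the head (pure waiting rows, partial sweeps, and the shift-limits of these coming from the orbit-closure definition of the row subshift $Y$ in Construction~\ref{con:CounterMachine}). The key observation is that for any fixed vertex position $\vec v$, the set of cone configurations with vertex at $\vec v$ is an open subset of $X$ homeomorphic to $S$ via the map reading the sequence of guessed bits off the cone. Lemma~\ref{lem:OpenDerivative} therefore yields $\rank_X(y) = \rank_S(x)$ for every cone configuration $y$ encoding $x \in S$, so after $\rank(S)$ derivative steps only degenerate configurations remain.

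The remaining step is to compute the Cantor-Bendixson rank of the space of degenerate configurations and show that it is exactly $4$. I would enumerate them explicitly and check that they form a short hierarchy: the waiting rows with finite horizontal transition account for one derivative step, their monochromatic (all-$0$ and all-$1$) shift-limits account for another, and two further steps come from the shift-closures of partial head sweeps on either side of the head. Adding these four derivative steps to the $\rank(S)$ steps that eliminate the cone configurations yields $\rank(X) = \rank(S) + 4$.

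The hard part will be this final bookkeeping. A priori the orbit-closure definition of $Y$ in Construction~\ref{con:CounterMachine} could produce more ``exotic'' limits than expected, and the crucial points to verify are (i) that the \PProperty forces the projective subdynamics of $X$ into a countable one-dimensional SFT and hence bounds the allowable shift-limit structure, and (ii) that the counter-machine cone, unlike the Turing-machine cone of \cite{JeVa11}, carries no auxiliary tape track, so its degenerate configurations admit a much simpler hierarchy. It is exactly these two features that reduce the overhead from the $+11$ of \cite{JeVa11} down to $+4$.
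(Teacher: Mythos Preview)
Your overall architecture is exactly the paper's: build a nondeterministic counter machine whose infinite runs biject with $S$, pass through Lemmas~\ref{thm:Morita} and~\ref{thm:Morita2} to a reversible two-counter machine, feed it to Construction~\ref{con:CounterMachine}, and use Lemma~\ref{lem:OpenDerivative} on the open sets of cones anchored at each $\vec v$ to reduce the problem to computing the rank of the set $Y$ of degenerate configurations.

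Where your proposal diverges is precisely the part you flag as hard, and your sketched hierarchy (``waiting rows $\to$ monochromatic limits $\to$ two steps of partial sweeps'') is not the right decomposition and would not pin the value at $4$. The paper's computation of $\rank(Y)=4$ rests on two machine-specific facts you do not mention. For the \emph{upper bound}, a degenerate $y$ containing the left border of an infinite cone cannot have both counters finite: the algorithm's total memory (it stores an ever-growing prefix of the guessed sequence) strictly increases, so after enough steps the sum of the counter values exceeds any bound; hence such $y$ has at most one finite counter plus the zig-zag head plus the border, giving rank at most $4$. Configurations containing only the right border carry no counters at all and have rank at most $2$. Without the memory-growth observation you have no upper bound: a generic two-counter machine could leave both counters small infinitely often, and the degenerate rank could be $5$ or more. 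For the \emph{lower bound}, one must actually exhibit a rank-$4$ degenerate configuration --- left border, head, one finite counter --- and for this the paper uses that the machines produced by Lemma~\ref{thm:Morita2} drive one of the two counters to zero infinitely often along every infinite run, so such patterns genuinely occur as limits of cone configurations. Your enumeration by row types does not track which borders and which counters are present, which is what actually stratifies $Y$.
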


\begin{proof}
Without loss of generality we assume that $S \neq \emptyset$. Since $S$ is $\PI^0_1$, there exists a Turing machine $T_S$ which outputs a potentially infinite list of words $F \subset \{0, 1\}^*$ such that
\[ S = \{ x \in \{0, 1\}^\N \;|\; \forall n \in \N : x_{[0, n-1]} \notin F \} . \]
We denote by $T_S(n)$ the (finite) list of words produced by $T_S$ after $n$ computation steps. We define a counter machine $M_S$ by Algorithm~\ref{alg:Cofinal}.

\begin{algorithm}
\caption{The program of the counter machine $M_S$}\label{alg:Cofinal}
\begin{algorithmic}[1]

\State $n \gets 0$
\State $W = \{ \lambda \}$
\Loop
	\State \Choose{$w_n \in \{0, 1\}$}
	\State $W \gets W \cup \{w_0 \cdots w_n\}$
	\If{$W \cap T_S(n) \neq \emptyset$}
		\State \textbf{reject}
	\EndIf
	\State $n \gets n + 1$
\EndLoop

\end{algorithmic}
\end{algorithm}

Using Lemma~\ref{thm:Morita} and Lemma~\ref{thm:Morita2}, we can assume that $M_S$ is reversible and uses only two counters. It is clear that the infinite computation histories of $M_S$ form a countable set, since each corresponds to an element of $S$. We then apply Construction~\ref{con:CounterMachine} to $M_S$ to obtain our SFT $X$.

For all $\vec n \in \Z^2$, the set $X_{\vec n}$ of configurations of $X$ where the computation cone is anchored at $\vec n$ is homeomorphic to $S$, since the choices of the $w_n$ are visible in the configurations and are the only source of nondeterminism in $M_S$. Furthermore, each set $X_{\vec n}$ is open in $X$, so by Lemma~\ref{lem:OpenDerivative}, we have
\[ X^{(\lambda)} = \bigcup_{\vec n \in \Z^2} X_{\vec n}^{(\lambda)} \cup Y, \]
for every ordinal $\lambda \leq \rank(S)$, where $Y$ contains only degenerate configurations. Furthermore, we show that the Cantor-Bendixson rank of $Y$ is exactly $4$. Let $y \in Y$ be arbitrary, so that $y$ does not contain the base of a computation cone, and let $\alpha = \rank(Y)$. If $y$ contains the right border of a computation cone, it cannot contain any counters, and its rank is then at most $\alpha - 2$. If it contains the left border, we claim it cannot contain both counters. This is because $M$ is executing an algorithm whose memory consumption increases with time (it remembers the set $W$ which increases in size), so for all $m \geq 0$ there exists $t \geq 0$ such that after $t$ computation steps, the sum of the counter values is always at least $m$. This means that the rank of $y$ is at most $\alpha - 4$. Finally, Figure~\ref{fig:Rank5} shows an example configuration of rank $\alpha - 4$, which contains the left border of a computation cone, one counter and the zig zag head. A finite counter value can be found, since the counter machines given by Lemma~\ref{thm:Morita2} decrement one of their counters to zero infinitely many times during all infinite computations. This shows that $\alpha = 4$, and the theorem is proved.
\end{proof}

\begin{figure}[ht]
\begin{center}
\begin{tikzpicture}[scale=0.35]

\fill[color=blue!50!white] (10,3) rectangle (19,14);
\fill[color=blue!50!white] (14,14) rectangle (19,16);
\fill[color=blue!50!white] (13,0) rectangle (19,3);
\fill[color=red] (5,8) rectangle (8,11);
\fill[color=red] (3,11) rectangle (5,16);
\fill[color=red] (3,0) rectangle (6,7);
\fill[color=blue!50!white] (12,14) rectangle (14,15);
\fill[color=blue!50!white] (12,1) rectangle (13,3);
\fill[color=red] (5,13) rectangle (8,16);
\fill[color=blue!50!white] (7,0) rectangle (9,4);
\fill[color=yellow] (5,11) rectangle (6,12);
\node () at (5.5,11.5) {$\rightarrow$};
\fill[color=blue!50!white] (8,5) rectangle (10,13);
\fill[color=blue!50!white] (8,14) rectangle (11,16);
\fill[color=yellow] (7,12) rectangle (8,13);
\node () at (7.5,12.5) {$\rightarrow$};
\fill[color=yellow] (9,13) rectangle (10,14);
\node () at (9.5,13.5) {$\rightarrow$};
\fill[color=blue!50!white] (9,0) rectangle (11,2);
\fill[color=yellow] (7,5) rectangle (8,6);
\node () at (7.5,5.5) {$\leftarrow$};
\fill[color=blue!50!white] (11,15) rectangle (13,16);
\fill[color=red] (6,0) rectangle (7,6);
\fill[color=red] (6,11) rectangle (8,12);
\fill[color=red] (7,6) rectangle (8,8);
\fill[color=yellow] (11,1) rectangle (12,2);
\node () at (11.5,1.5) {$\leftarrow$};
\fill[color=blue!50!white] (11,0) rectangle (12,1);
\fill[color=blue!50!white] (9,2) rectangle (10,3);
\fill[color=yellow] (11,14) rectangle (12,15);
\node () at (11.5,14.5) {$\rightarrow$};
\fill[color=yellow] (10,2) rectangle (11,3);
\node () at (10.5,2.5) {$\leftarrow$};
\fill[color=yellow] (13,15) rectangle (14,16);
\node () at (13.5,15.5) {$\rightarrow$};
\fill[color=red] (4,9) rectangle (5,11);
\fill[color=yellow] (4,8) rectangle (5,9);
\node () at (4.5,8.5) {$\leftarrow$};
\fill[color=red] (5,12) rectangle (7,13);
\fill[color=red] (3,7) rectangle (4,9);
\fill[color=red] (6,7) rectangle (7,8);
\fill[color=yellow] (8,4) rectangle (9,5);
\node () at (8.5,4.5) {$\leftarrow$};
\fill[color=yellow] (6,6) rectangle (7,7);
\node () at (6.5,6.5) {$\leftarrow$};
\fill[color=red] (4,7) rectangle (5,8);
\fill[color=yellow] (5,7) rectangle (6,8);
\node () at (5.5,7.5) {$\leftarrow$};
\fill[color=yellow] (12,0) rectangle (13,1);
\node () at (12.5,0.5) {$\leftarrow$};
\fill[color=blue!50!white] (9,4) rectangle (10,5);
\fill[color=blue!50!white] (11,2) rectangle (12,3);
\fill[color=yellow] (9,3) rectangle (10,4);
\node () at (9.5,3.5) {$\leftarrow$};
\fill[color=yellow] (3,10) rectangle (4,11);
\node () at (3.5,10.5) {$\rightarrow$};
\fill[color=blue!50!white] (7,4) rectangle (8,5);
\fill[color=blue!50!white] (8,13) rectangle (9,14);
\fill[color=yellow] (3,9) rectangle (4,10);
\node () at (3.5,9.5) {$\leftarrow$};
\draw (0,0) grid (19,16);

\draw[ultra thick] (3,0) -- (3,16);

\draw[->] (1.5,17.5) -- (1.5,16.25);
\picnote{(1.5,17.5)}{W};
\draw[->] (5.5,17.5) -- (5.5,16.25);
\picnote{(5.5,17.5)}{R};
\draw[->] (10.5,17.5) -- (10.5,16.25);
\picnote{(10.5,17.5)}{B};
\draw[->] (13.5,17.5) -- (13.5,16.25);
\picnote{(13.5,17.5)}{Y};

\end{tikzpicture}
\end{center}
\caption{A configuration of rank $\alpha - 4$ in $Y$, from the proof of Theorem~\ref{thm:BigRanks}. The computation area, and the other counter, are infinite to the right. A rank $\alpha - 3$ configuration would have, for example, an infinite value also in the red counter. A rank $\alpha - 2$ configuration would lose the zig zag head, and rank $\alpha - 1$ configurations are periodic.}
\label{fig:Rank5}
\end{figure}
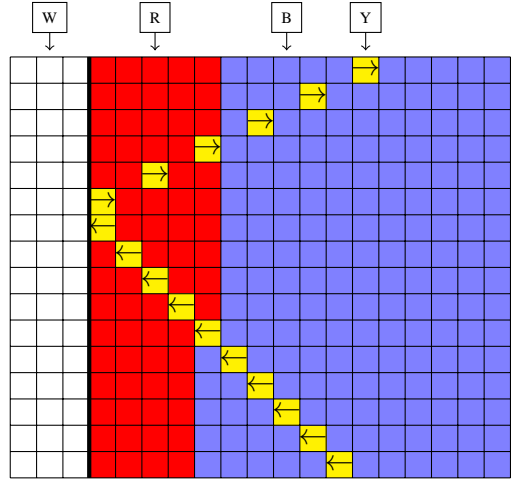

In \cite{CeClSmSoWa86} it is proved that the rank of a nonempty countable $\PI^0_1$ set can be any recursive successor ordinal. Thus, Theorem~\ref{thm:BigRanks} shows that for any recursive ordinal $\alpha$ \emph{not} of the form $\beta + n$ for a limit ordinal $\beta$ and $n \in \{0,1,2,3,4\}$, there exists a countable deterministic two-dimensional SFT with Cantor-Bendixson rank exactly $\alpha$. In \cite{BaDuJe08}, it it shown that the ranks $\beta$ and $\beta + 1$ cannot be achieved for any countable subshift, leaving only the cases $\beta + 2$, $\beta + 3$ and $\beta + 4$ unaswered.

With the same construction, we also obtain the upper bound $6$ for the smallest possible rank of a countable SFT with uncomputable points. For $\PI^0_1$ subshifts (and closed sets in general), the smallest such rank is known to be $2$ \cite{CeDaToWy12}. Applied to such a set, the construction in Theorem~\ref{thm:BigRanks} gives a countable SFT with rank $6$, and clearly preserves computability and uncomputability of non-degenerate points.

Finally, we give our geometric construction of an infinite Cantor-Bendixson rank as another, perhaps more natural, example of how infinite ranks might arise in countable SFTs. The construction has the \PProperty, but is not deterministic.

\begin{example}
\label{ex:Diamonds}
We give an example of a countable two-dimensional SFT $X$ of rank at least $\omega$ with the \PProperty. Consider the one-dimensional subshift containing points of the form
\[ {}^\infty 0 a^k 0^{m_1} a^{k-1}b 0^{m_2} a^{k-2}b^2 0^{m_3} \cdots 0^{m_k} b^k 0^\infty, \]
where $k \in \N$ and $m_i \in \N$ for all $i$ are arbitrary. For all $k$, the subshift contains configurations with $k$ `islands' floating in a sea of $0$'s, but no configuration contains an infinite number of islands. This is a countable subshift with infinite rank, and in the following, we construct a two-dimensional SFT $X$ that uses exactly the same idea.

The SFT $X$ contains one infinite horizontal \emph{dedicated line}. The top and bottom halves are colored differently. On the line one may have (perhaps infinite) \emph{diamonds}, colored red and blue, whose left and right corners must be on the dedicated line. The diamonds must be nested, that is, a blue diamond must either contain a red diamond or be contained in one (not both) and vice versa. This is established by sending signals along the dedicated line. Two distinct diamonds may not overlap, unless one is completely inside the other (including a complete overlap). The insides of the diamonds are colored differently from their outsides.

From the top (bottom) corner of every red (blue) diamond, a \emph{decrement signal} is sent to the right (left, respectively). Also, the top (bottom) corner of every red (blue) diamond must absorb one decrement signal traveling one tile above (below) it. The area between the line and a signal is colored differently from its complement. See Figure~\ref{fig:Diamonds} for a clarifying picture.

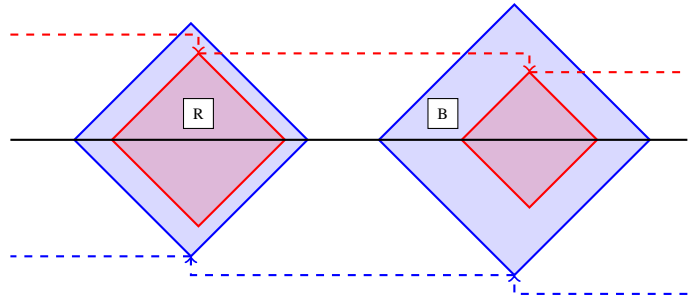
\begin{figure}[ht]

\begin{center}
\begin{tikzpicture}

\diamondlord{2.4}{0}{1.55}{blue};
\diamondlord{2.5}{0}{1.15}{red};
\diamondlord{6.7}{0}{1.8}{blue};
\diamondlord{6.9}{0}{.9}{red};

\draw[blue,thick,dashed,->] (9,-2.05) -- (6.7,-2.05) -- (6.7,-1.8);
\draw[blue,thick,dashed,->] (6.7,-1.8) -- (2.4,-1.8) -- (2.4,-1.55);
\draw[blue,thick,dashed] (2.4,-1.55) -- (0,-1.55);

\draw[red,thick,dashed,->] (0,1.4) -- (2.5,1.4) -- (2.5,1.15);
\draw[red,thick,dashed,->] (2.5,1.15) -- (6.9,1.15) -- (6.9,.9);
\draw[red,thick,dashed] (6.9,.9) -- (9,.9);

\draw[thick] (0,0) -- (9,0);

\picnote{(2.5,0.35)}{R};
\picnote{(5.75,0.35)}{B};

\end{tikzpicture}
\end{center}

\caption{The diamond construction.}
\label{fig:Diamonds}
\end{figure}

We first show that $X$ is countable. Indeed, for each $(n,m) \in \N^2$, if a configuration $x$ of $X$ contains nested diamonds of sizes $n$ and $m$, then there are at most $n+m-1$ pairs of diamonds in $x$, since the size of the red (blue) diamonds decreases to the right (left). The number of ways to arrange these points and the surrounding diamonds is countable. One can also check that the number of exceptional points (ones containing, say, an infinite diamond or just signals) is countable.

Next, we show that $X^{(\omega)}$ is a nonempty set of finite rank. First, the isolated points of $X$ are exactly those that contain finite red and blue diamonds, and whose rightmost red and leftmost blue diamonds are of size $1$. In general, if $x \in X$ contains finite red and blue diamonds, we say that $x$ \emph{has type $(n,m)$} if the rightmost red diamond (leftmost blue diamond) is of size $n$ ($m$). It is then easy to see that for all $k \in \N$, the set $X^{(k)}$ will contain all of $X$, except for the points of type $(n,m)$ with $n + m < k + 2$. Then $X^{(\omega)}$ is nonempty, but will consist of only exceptional points, and clearly $X^{(\omega+k)} = \emptyset$ for some finite $k$. 
\end{example}

\section{Chains}

In this section, we investigate whether the subpattern poset of a countable SFT necessarily has the ascending or descending chain condition. It turns out that it always has the ACC, but need not have the DCC, that is, infinite upward chains are impossible, but infinite downward chains are possible. A proof of the following theorem can be found in \cite[Theorem 3.7]{BaDuJe08}. We also give a less sophisticated, more hands-on proof in \cite{SaTo12b}.

\begin{theorem}
The subpattern poset of a countable subshift has the ACC.
\end{theorem}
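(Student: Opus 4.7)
The plan is to exploit the Cantor-Bendixson structure of $X$. Since $X$ is countable, Lemma~\ref{lem:CountableRanked} gives $X^{(\rank(X))} = \emptyset$, so for each $x \in X$ I would define $\mu(x)$ as the largest ordinal $\alpha$ with $x \in X^{(\alpha)}$. This maximum is attained because the identity $X^{(\lambda)} = \bigcap_{\beta < \lambda} X^{(\beta)}$ at limit ordinals forces the set $\{\alpha : x \in X^{(\alpha)}\}$ to be a closed initial segment of ordinals. By construction $x$ is isolated in $X^{(\mu(x))}$, which is itself a subshift: each derivative is closed and shift-invariant, because the shift maps are homeomorphisms of $X$ that commute with the topological derivative operator. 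Hence the characterization of isolated points in subshifts recorded in the preliminaries applies to $X^{(\mu(x))}$.

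Given an ascending chain $x_1 \leq x_2 \leq \cdots$ in $X$, I would then look at the set of ordinals $\{\mu(x_i) : i \geq 1\}$. Since the ordinals are well-ordered this set has a minimum $m$, attained at some least index $i_0$. Then for every $j \geq i_0$ we have $\mu(x_j) \geq m$, hence $x_j \in X^{(m)}$. I would pick a pattern $(D, s) \in \mathcal{B}(X^{(m)})$ witnessing the isolation of $x_{i_0}$ inside $X^{(m)}$, i.e., such that $x_{i_0}$ is the unique element of $X^{(m)}$ with $x_{i_0}|_D = s$.

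Because $s \in \mathcal{B}(x_{i_0}) \subseteq \mathcal{B}(x_j)$, the pattern occurs in $x_j$ at some translate $\vec n$, so $\sigma^{\vec n}(x_j)|_D = s$. Shift-invariance of $X^{(m)}$ gives $\sigma^{\vec n}(x_j) \in X^{(m)}$, and the uniqueness of the isolating pattern forces $\sigma^{\vec n}(x_j) = x_{i_0}$. Thus every $x_j$ with $j \geq i_0$ is a translate of $x_{i_0}$, so $\mathcal{B}(x_j) = \mathcal{B}(x_{i_0})$, which means the chain is constant in the subpattern poset from $i_0$ onward. The only point requiring care is verifying that the isolated-point characterization transfers cleanly from $X$ to the derived subshift $X^{(m)}$, but this is immediate once one observes that $X^{(m)}$ is a subshift in its own right; everything else is bookkeeping about ordinals.
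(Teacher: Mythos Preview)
Your argument is correct. The paper itself does not include a proof of this theorem---it refers to \cite[Theorem 3.7]{BaDuJe08} and to the authors' earlier note \cite{SaTo12b}---but your Cantor--Bendixson approach is the natural one and is consistent with the later remark in the paper that in \cite{BaDuJe08} the Cantor--Bendixson rank of a countable SFT bounds the length of its chains; your $\mu$ is just $\rank_X(\cdot)-1$ in the paper's notation, and the isolating-pattern trick is exactly how one cashes in the rank information.
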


Arbitrary widths for the subpattern poset, that is, infinite antichains, are possible and in fact are harder to avoid than produce. Example~\ref{ex:Grid} is one example, but even the one-dimensional countable SFT $\B^{-1}(0^*1^*2^*)$ has an infinite antichain.

\subsection{An infinite downward chain}

While upward chains are easy to show impossible and antichains are trivial to find, the case of a downward chain is more interesting, and was left open in \cite{BaDuJe08}. This is the content of the following theorem:

\begin{theorem}
\label{thm:Chain}
There exists a deterministic countable two-dimensional SFT $X$ such that an infinite downward chain can be order-embedded in $\SP(X)$.
\end{theorem}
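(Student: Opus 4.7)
My plan is to construct the SFT $X$ so that it contains a distinguished family $\{x_k\}_{k \geq 1}$ of configurations, where each $x_k$ carries a locally detectable ``minimum level'' $k \in \N$ and contains substructures of all levels $\geq k$ arranged self-similarly, while $x_{k+1}$ contains only substructures of levels $\geq k+1$. The descending chain $x_1 > x_2 > \cdots$ in $\SP(X)$ would then arise from two ingredients: (i) a finite pattern witnessing the level-$k$ substructure in $x_k$ that is therefore absent from $x_{k+1}$, and (ii) an embedding of every finite window of $x_{k+1}$ into $x_k$ via the self-similar arrangement of the higher levels.

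For the concrete realization I would apply Construction~\ref{con:CounterMachine} with a counter machine $M$ that first nondeterministically guesses the value of $k$ during a short initialization phase (through a sequence of increment-or-halt choices on a dedicated counter), and then deterministically lays down an infinite geometric pattern of level-$k$, level-$(k+1)$, level-$(k+2)$, $\ldots$ substructures in the expanding cone produced by the construction. Reversibility via Lemma~\ref{thm:Morita} yields a south-deterministic SFT, and since the only essential source of nondeterminism is the choice of $k$, together with the usual translation freedom, the resulting set of configurations is countable. The levels themselves would be geometric objects in the style of Example~\ref{ex:Diamonds}, with sizes enforced by diagonal signals whose lengths are read off from counter values maintained by $M$.

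The main obstacle will be guaranteeing ingredient (ii) above. In any naive arrangement, the ``minimal'' level-$(k+1)$ substructure of $x_{k+1}$ sits in a locally distinguishable context --- for example, with a halfplane of background or with the absence of a smaller neighboring substructure on one side --- that is not replicated anywhere around the level-$(k+1)$ substructures of $x_k$, which are instead flanked on the ``small'' side by a level-$k$ substructure. Overcoming this requires designing the geometric layout so that the local context of each level's substructure is translation-invariant across all $x_k$ simultaneously; intuitively, the substructures must extend bi-infinitely in one direction so that no horizontal halfplane is visible as a boundary, and the outer boundary of $x_{k+1}$'s minimum-level substructure must coincide locally with the interior view of some level-$(k+1)$ substructure sitting inside $x_k$. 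Once this geometric design is in place, verifying $x_k > x_{k+1}$ reduces to a direct case analysis on window positions, combined with the explicit exhibition of the distinguishing level-$k$ pattern in $x_k$, and the infinite descending chain is obtained by letting $k$ range over $\N$.
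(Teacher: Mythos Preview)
Your proposal has a fatal obstruction that the paper itself proves a few pages later. Construction~\ref{con:CounterMachine} always produces an SFT with the \PProperty: the rows lie in a countable one-dimensional SFT by design. But Corollary~\ref{cor:ShortChains} (a consequence of Lemma~\ref{lem:RProperty} and Proposition~\ref{prop:ShortChains}) shows that a countable SFT with the \PProperty has chains of bounded finite length. So any SFT you obtain from Construction~\ref{con:CounterMachine}, no matter how cleverly you program the counter machine, is provably incapable of carrying an infinite downward chain in its subpattern poset. Your instinct that ``the substructures must extend bi-infinitely in one direction'' is pointing in the right direction, but it is exactly this requirement that is incompatible with the cone-with-a-seed architecture of Construction~\ref{con:CounterMachine}: the base of the cone is a globally unique pattern that encodes the guessed value $k$, so the base pattern of $x_{k+1}$ simply does not occur in $x_k$, and $x_{k+1} \leq x_k$ fails.

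The paper's construction is therefore not a counter-machine construction at all but a purely geometric two-layer SFT. The first layer consists of infinite chains of nested ``red triangles'' related by \emph{predecessor} (same level, half the width) and \emph{parent/child} (level increases by one, width drops by a factor of eight). The second layer attaches to each triangle a \emph{level stripe} whose geometric thickness encodes the level as a natural number, together with a \emph{decision signal} that deterministically decides, from the width-to-level ratio, whether a successor and/or child exists. Because the level is a stripe thickness rather than a bounded symbol, rows can contain arbitrarily many signals, so the \PProperty fails, and the obstruction above is avoided. The configuration $x_\ell$ consists of an infinite predecessor-chain of level-$\ell$ triangles of widths $2^k$, $k \to \infty$; inside each sufficiently large one sits a finite predecessor-chain of level-$(\ell+1)$ children whose length grows without bound. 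These finite chains are locally indistinguishable from initial segments of $x_{\ell+1}$, which yields $x_{\ell+1} < x_\ell$. Countability does not come from a seed but from the fact that levels are bounded below and every finite triangle has a parent or predecessor, so the minimal-level triangle of minimal width, once located, determines everything. Southwest determinism is arranged by the decision-signal mechanism. The difficulty you correctly isolate, matching the local context of the minimal substructure across all $x_k$, is what the bulk of the paper's proof is devoted to, via an explicit inductive verification that the level-stripe and decision mechanisms interact consistently.
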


\begin{proof}
We illustrate the idea of the construction with the following one-dimensional `subshift' on the countably infinite alphabet $\N$, which is generated by a single configuration $\tilde x \in \N^\Z$ of the Baire space. First, define $\tilde x^1 \in \{0,1\}^\Z$ by $\tilde x^1_{2^i} = 1$ for all $i \in \N$, and $\tilde x^1_i = 0$ everywhere else. We inductively define $\tilde x^{n+1} \in \{0, \ldots, n+1\}^\Z$ such that $\tilde x^{n+1}_i \neq x^n_i$ only if $\tilde x^{n+1}_i = n+1$ and $\tilde x^n_i = 0$, and if $(n+1)0^k(n+1)$ occurs in $\tilde x^{n+1}$, then $k = 2^i-1$ for some $i \in \N$. To define $\tilde x^{n+1}$, we go through all segments $\tilde x^n_{[a, a + 2^i]} = n0^{2^i-1}n$, and rewrite the positions $a + 2^{i-1} + 2^j$ for $j \in \{0, \ldots, i-2\}$ with the letter $n+1$. Then, $\tilde x$ is defined as the limit of the sequence $(\tilde x^n)_{n \in \N}$. So,
\[ \tilde x = \cdots 00.1101002100002201000000002202000100000000000000002202003200000001000 \cdots \]
The `subshift' $\tilde X = \overline{\mathcal{O}(\tilde x)}$ contains the infinite decreasing chain $(\tilde y^n)_{n \in \N}$, where each $\tilde y^n$ is obtained from $\tilde x$ by incrementing each nonzero coordinate by $n$ (some prefixes of $\tilde y^1$ are already visible in $\tilde x$ above). Note that $\tilde X$ is a countable set, but not compact as a topological space.

Our construction is basically the implementation of $\tilde X$ as a deterministic countable two-dimensional SFT, and we build it in several intermediate steps. The cells of our SFTs are colored with either \emph{main colors} (such as white or blue) or \emph{border colors} that are represented by black tiles in the figures. The border colors serve mostly to separate the different colors from each other, but may also contain a bounded amount of data (a truth value or a finite counter, for example). See Table~\ref{tab:MainColors} for the labels of the colors used in our figures. A configuration consists of large areas of different main colors separated by the border colors, which we describe in geometric terms. Each of the areas resembles a geometric shape (a triangle or a half-plane), and the borders are discrete versions of rational lines.

Let $x$ be a configuration of some of our SFTs, and let $c$ be a main color. We say that two coordinates $\vec n, \vec m \in \Z^2$ are \emph{$c$-adjacent} in $x$, if $\| \vec n - \vec m \| \leq 1$ and $x_{\vec n}$ and $x_{\vec m}$ both have the color $c$. They are \emph{$c$-connected} if there exists a chain $(\vec{n}_i)_{i=0}^k$ of coordinates such that $\vec{n}_0 = \vec n$, $\vec{n}_k = \vec m$ and each pair $\vec{n}_i, \vec{n}_{i+1}$ is $c$-adjacent. A set $D \subset \Z^2$ is \emph{$c$-connected} in $x$ if $x_{\vec n} = c$ for all $\vec n \in D$, and all coordinate pairs $\vec n, \vec m \in D$ are $c$-connected. A maximal $c$-connected set $D$ in $x$ is called a $c$ shape, where `shape' is the geometric figure that $D$ resembles. For example, a triangular area of red cells would be called a red triangle.

We start by constructing a countable two-dimensional SFT $X_1$ corresponding to $\tilde x^1$. It is defined by the patterns of size $3 \times 3$ occurring in Figure~\ref{fig:ChainPicSimple}. To see the correspondence explicitly, define a continuous map $\phi_1 : X_1 \to \{0,1\}^\Z$ by $\phi_1(x)_i = 1$ iff there exists $j \in \Z$ such that $(i,j)$ is the southeast corner of a red triangle in $x$. Then, $\overline{\mathcal{O}(\tilde x^1)} \subset \phi_1(X_1)$, and the only configurations in $\phi_1(X_1) - \overline{\mathcal{O}(\tilde x^1)}$ are those where a single $1$ lies on the left of the infinite pattern of $1$'s of $\tilde x^1$. They are generated by an infinite pattern of triangles as seen in Figure~\ref{fig:ChainPicSimple}, plus a single infinite red triangle in the southwest.

\begin{figure}[ht]
\begin{center}
\begin{tikzpicture}[scale=0.1]

\draw[color=gray] (0,0) grid (74,67);

\fill[color=blue!50] (51,0) rectangle (74,49);
\fill[color=blue!50] (12,0) rectangle (51,10);
\fill[color=red] (24,34) rectangle (34,43);
\fill[color=red] (27,43) rectangle (34,49);
\fill[color=blue!50] (37,10) rectangle (51,35);
\fill[color=black] (35,65) rectangle (68,66);
\fill[color=blue!50] (61,49) rectangle (74,59);
\fill[color=blue!50] (27,10) rectangle (37,25);
\fill[color=blue!50] (68,59) rectangle (74,66);
\fill[color=blue!50] (14,10) rectangle (27,12);
\fill[color=black] (34,33) rectangle (35,65);
\fill[color=red] (30,49) rectangle (34,55);
\fill[color=red] (13,18) rectangle (18,21);
\fill[color=black] (4,3) rectangle (5,5);
\fill[color=black] (17,15) rectangle (18,16);
\fill[color=black] (18,17) rectangle (19,33);
\fill[color=black] (29,27) rectangle (30,28);
\fill[color=black] (10,9) rectangle (11,17);
\fill[color=blue!50] (32,25) rectangle (37,30);
\fill[color=black] (23,41) rectangle (24,43);
\fill[color=blue!50] (34,30) rectangle (37,32);
\fill[color=black] (55,53) rectangle (56,54);
\fill[color=blue!50] (8,0) rectangle (12,6);
\fill[color=blue!50] (69,66) rectangle (74,67);
\fill[color=blue!50] (56,49) rectangle (61,54);
\fill[color=blue!50] (6,0) rectangle (8,4);
\fill[color=black] (11,17) rectangle (12,19);
\fill[color=blue!50] (22,12) rectangle (27,20);
\fill[color=red] (36,66) rectangle (66,67);
\fill[color=blue!50] (58,54) rectangle (61,56);
\fill[color=blue!50] (17,12) rectangle (22,15);
\fill[color=red] (21,34) rectangle (24,37);
\fill[color=black] (3,2) rectangle (4,3);
\fill[color=red] (22,37) rectangle (24,39);
\fill[color=black] (19,33) rectangle (34,34);
\fill[color=red] (25,43) rectangle (27,45);
\fill[color=blue!50] (15,12) rectangle (17,13);
\fill[color=blue!50] (43,35) rectangle (51,41);
\fill[color=black] (5,5) rectangle (7,7);
\fill[color=black] (28,26) rectangle (29,27);
\fill[color=red] (29,49) rectangle (30,53);
\fill[color=red] (31,55) rectangle (34,57);
\fill[color=black] (52,50) rectangle (53,51);
\fill[color=red] (17,21) rectangle (18,29);
\fill[color=blue!50] (66,59) rectangle (68,64);
\fill[color=blue!50] (63,59) rectangle (66,61);
\fill[color=red] (26,45) rectangle (27,47);
\fill[color=black] (21,37) rectangle (22,39);
\fill[color=black] (22,20) rectangle (23,21);
\fill[color=red] (9,10) rectangle (10,13);
\fill[color=blue!50] (20,15) rectangle (22,18);
\fill[color=red] (15,21) rectangle (17,25);
\fill[color=blue!50] (13,10) rectangle (14,11);
\fill[color=black] (42,40) rectangle (43,41);
\fill[color=black] (57,55) rectangle (58,56);
\fill[color=black] (24,43) rectangle (25,45);
\fill[color=black] (27,49) rectangle (28,51);
\fill[color=black] (9,7) rectangle (10,8);
\fill[color=black] (30,55) rectangle (31,57);
\fill[color=black] (11,9) rectangle (12,10);
\fill[color=red] (16,25) rectangle (17,27);
\fill[color=black] (14,23) rectangle (15,25);
\fill[color=black] (40,38) rectangle (41,39);
\fill[color=black] (18,16) rectangle (19,17);
\fill[color=black] (22,39) rectangle (23,41);
\fill[color=black] (23,21) rectangle (24,22);
\fill[color=blue!50] (24,20) rectangle (27,22);
\fill[color=blue!50] (46,41) rectangle (51,44);
\fill[color=black] (8,11) rectangle (9,13);
\fill[color=black] (12,17) rectangle (18,18);
\fill[color=blue!50] (48,44) rectangle (51,46);
\fill[color=blue!50] (36,32) rectangle (37,34);
\fill[color=blue!50] (42,35) rectangle (43,40);
\fill[color=blue!50] (33,30) rectangle (34,31);
\fill[color=blue!50] (53,49) rectangle (56,51);
\fill[color=blue!50] (4,0) rectangle (6,2);
\fill[color=blue!50] (40,35) rectangle (42,38);
\fill[color=blue!50] (35,32) rectangle (36,33);
\fill[color=black] (63,61) rectangle (64,62);
\fill[color=black] (6,4) rectangle (7,5);
\fill[color=black] (7,9) rectangle (8,11);
\fill[color=red] (20,34) rectangle (21,35);
\fill[color=red] (33,57) rectangle (34,61);
\fill[color=blue!50] (30,25) rectangle (32,28);
\fill[color=blue!50] (26,22) rectangle (27,24);
\fill[color=black] (31,57) rectangle (32,59);
\fill[color=blue!50] (47,44) rectangle (48,45);
\fill[color=blue!50] (18,15) rectangle (20,16);
\fill[color=black] (33,61) rectangle (34,63);
\fill[color=black] (54,52) rectangle (55,53);
\fill[color=black] (41,39) rectangle (42,40);
\fill[color=black] (35,66) rectangle (36,67);
\fill[color=blue!50] (50,46) rectangle (51,48);
\fill[color=blue!50] (57,54) rectangle (58,55);
\fill[color=black] (21,19) rectangle (22,20);
\fill[color=blue!50] (65,61) rectangle (66,63);
\fill[color=blue!50] (45,41) rectangle (46,43);
\fill[color=black] (16,27) rectangle (17,29);
\fill[color=black] (29,53) rectangle (30,55);
\fill[color=red] (28,49) rectangle (29,51);
\fill[color=black] (32,59) rectangle (33,61);
\fill[color=black] (25,45) rectangle (26,47);
\fill[color=blue!50] (59,56) rectangle (61,57);
\fill[color=black] (16,14) rectangle (17,15);
\fill[color=blue!50] (10,6) rectangle (12,8);
\fill[color=black] (6,7) rectangle (7,9);
\fill[color=black] (47,45) rectangle (48,46);
\fill[color=black] (64,62) rectangle (65,63);
\fill[color=black] (25,23) rectangle (26,24);
\fill[color=black] (48,46) rectangle (49,47);
\fill[color=blue!50] (29,25) rectangle (30,27);
\fill[color=blue!50] (39,35) rectangle (40,37);
\fill[color=blue!50] (19,16) rectangle (20,17);
\fill[color=blue!50] (52,49) rectangle (53,50);
\fill[color=black] (59,57) rectangle (60,58);
\fill[color=blue!50] (7,4) rectangle (8,5);
\fill[color=red] (14,21) rectangle (15,23);
\fill[color=black] (36,34) rectangle (37,35);
\fill[color=red] (23,39) rectangle (24,41);
\fill[color=blue!50] (28,25) rectangle (29,26);
\fill[color=blue!50] (38,35) rectangle (39,36);
\fill[color=blue!50] (67,64) rectangle (68,65);
\fill[color=blue!50] (23,20) rectangle (24,21);
\fill[color=black] (20,35) rectangle (21,37);
\fill[color=black] (58,56) rectangle (59,57);
\fill[color=black] (26,47) rectangle (27,49);
\fill[color=blue!50] (16,13) rectangle (17,14);
\fill[color=blue!50] (64,61) rectangle (65,62);
\fill[color=black] (2,0) rectangle (3,1);
\fill[color=black] (61,59) rectangle (62,60);
\fill[color=black] (49,47) rectangle (50,48);
\fill[color=black] (10,8) rectangle (11,9);
\fill[color=red] (32,57) rectangle (33,59);
\fill[color=blue!50] (5,2) rectangle (6,3);
\fill[color=black] (26,24) rectangle (27,25);
\fill[color=black] (28,51) rectangle (29,53);
\fill[color=black] (45,43) rectangle (46,44);
\fill[color=black] (32,30) rectangle (33,31);
\fill[color=black] (17,29) rectangle (18,31);
\fill[color=black] (31,29) rectangle (32,30);
\fill[color=black] (39,37) rectangle (40,38);
\fill[color=black] (20,18) rectangle (21,19);
\fill[color=black] (9,13) rectangle (10,15);
\fill[color=black] (4,2) rectangle (5,3);
\fill[color=black] (53,51) rectangle (54,52);
\fill[color=black] (15,25) rectangle (16,27);
\fill[color=black] (46,44) rectangle (47,45);
\fill[color=blue!50] (41,38) rectangle (42,39);
\fill[color=black] (7,5) rectangle (8,6);
\fill[color=black] (51,49) rectangle (52,50);
\fill[color=black] (3,1) rectangle (4,2);
\fill[color=blue!50] (3,0) rectangle (4,1);
\fill[color=black] (24,22) rectangle (25,23);
\fill[color=red] (8,10) rectangle (9,11);
\fill[color=black] (14,12) rectangle (15,13);
\fill[color=blue!50] (44,41) rectangle (45,42);
\fill[color=black] (12,19) rectangle (13,21);
\fill[color=black] (5,3) rectangle (6,4);
\fill[color=black] (34,32) rectangle (35,33);
\fill[color=black] (44,42) rectangle (45,43);
\fill[color=blue!50] (60,57) rectangle (61,58);
\fill[color=black] (66,66) rectangle (67,67);
\fill[color=black] (33,31) rectangle (34,32);
\fill[color=black] (8,9) rectangle (10,10);
\fill[color=blue!50] (55,51) rectangle (56,53);
\fill[color=black] (30,28) rectangle (31,29);
\fill[color=black] (27,25) rectangle (28,26);
\fill[color=blue!50] (54,51) rectangle (55,52);
\fill[color=black] (13,21) rectangle (14,23);
\fill[color=black] (8,6) rectangle (9,7);
\fill[color=black] (19,17) rectangle (20,18);
\fill[color=blue!50] (31,28) rectangle (32,29);
\fill[color=blue!50] (11,8) rectangle (12,9);
\fill[color=black] (15,13) rectangle (16,14);
\fill[color=black] (43,41) rectangle (44,42);
\fill[color=black] (65,63) rectangle (66,64);
\fill[color=black] (66,64) rectangle (67,65);
\fill[color=black] (35,33) rectangle (36,34);
\fill[color=blue!50] (49,46) rectangle (50,47);
\fill[color=black] (50,48) rectangle (51,49);
\fill[color=blue!50] (62,59) rectangle (63,60);
\fill[color=blue!50] (25,22) rectangle (26,23);
\fill[color=black] (13,11) rectangle (14,12);
\fill[color=black] (37,35) rectangle (38,36);
\fill[color=black] (12,10) rectangle (13,11);
\fill[color=blue!50] (21,18) rectangle (22,19);
\fill[color=blue!50] (9,6) rectangle (10,7);
\fill[color=black] (56,54) rectangle (57,55);
\fill[color=black] (38,36) rectangle (39,37);
\fill[color=red] (12,18) rectangle (13,19);
\fill[color=black] (62,60) rectangle (63,61);
\fill[color=black] (19,34) rectangle (20,35);
\fill[color=black] (68,66) rectangle (69,67);
\fill[color=black] (60,58) rectangle (61,59);

\picnote{(30,40)}{R};
\picnote{(10,40)}{W};
\picnote{(50,40)}{LB};

\end{tikzpicture}
\end{center}
\caption{A configuration corresponding to $\tilde x^1$.}
\label{fig:ChainPicSimple}
\end{figure}
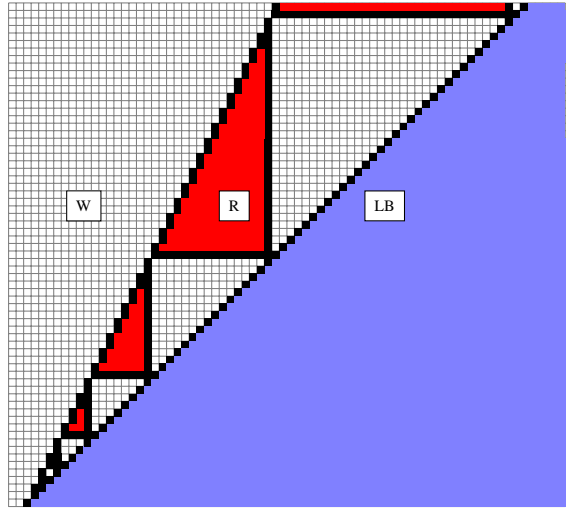

Define the map $\Delta : \N^\Z \to \{0,1\}^\Z$ by $\Delta(\tilde z)_i = 0$ iff $\tilde z_i = 0$. Next, we wish to modify $X_1$ and construct another SFT $X_2$ corresponding to $\Delta(\tilde x)$ via an analogous projection. To accomplish this, we split each red triangle into dark red and light red halves with a signal emitted by the northeast corners in the direction $(-1,-8)$. This signal intersects the south border of the triangle, and there we place the northeast corner of a smaller triangle. A partial configuration is shown in Figure~\ref{fig:ChainPicSimple2}, and the $3 \times 9$ patterns visible in the figure (apart from the blue vertical lines) define the SFT. From now on, the color red refers to both dark red and light red, so a red triangle is the union of a dark red triangle and the thinner light red triangle on its right. If we define $\phi_2 : X_2 \to \{0,1\}^\Z$ analogously to $\phi_1$, then $\overline{\mathcal{O}(\Delta(\tilde x))} \subset \phi_2(X_2)$ holds\footnote{Note that the closure $\overline{\mathcal{O}(\Delta(\tilde x))}$ is uncountable even though $\tilde{X} = \overline{\mathcal{O}(\tilde x)}$ is not.}, and $\phi_2(X_2) - \overline{\mathcal{O}(\Delta(\tilde x))}$ contains only anomalous configurations similar to those in $\phi_1(X_1) - \overline{\mathcal{O}(\tilde x^1)}$.

\input{ChainPicSimple2}

The top of every red triangle $T'$ in $X_2$ is attached either to the southwest corner or the south border of another red triangle $T$. In the first case, $T$ is the \emph{predecessor} of $T'$, and in the second, its \emph{parent}. Also, $T'$ is the \emph{successor} (\emph{child}, respectively) of $T$. We choose the SFT rules of $X_2$ so that every red triangle has either a predecessor or a parent. If the width of $T'$ is $w$, then the width of its predecessor is $2w$, and the width of its parent is $8w$. This is because the southeast corner of every red triangle meets the same light blue half plane. We also note here that $X_2$ is not countable, as one can construct an uncountable set of configurations as follows: start with a single finite red triangle $T_0$, and for all $n \in \N$, place the red triangle $T_{n+1}$, which can be chosen either as the predecessor or parent of $T_n$.

The construction of the first layer is almost finished, but we still need to make a small modification. Namely, define the SFT $X_3$ that has the same rules as $X_2$, except that a red triangle need not have a child nor a successor, even if its size would allow it. The SFT $X_3$ is again uncountable, for the same reason as $X_2$. The correspondence between $X_3$ and $\overline{\mathcal{O}(\Delta(\tilde x))}$ is not as strong as for $X_2$: if we define $\phi_3 : X_3 \to \{0,1\}^\Z$ analogously to $\phi_1$ and $\phi_2$, then every configuration of $\phi_3(X_3)$ is obtained from one of $\overline{\mathcal{O}(\Delta(\tilde x))}$ by changing certain $1$s to $0$s and possibly adding the anomalous lone $1$, but it is not easy to describe which changes can be made. However, the subshifts are still similar in spirit, and the complete SFT $X$ will be countable and contain an infinite downward chain for essentially the same reasons as $\tilde X$.

The next step in our construction is to attach to each red triangle a natural number, called its \emph{level}, analogously to the numbers of $\tilde x$. The level of a triangle should be the same as the level of its predecessor, or one greater than the level of its parent. This will force every chain of red triangles $(T_n)_{n \in \N}$, where each $T_{n+1}$ is either the parent or predecessor of $T_n$, to have only finitely many parent relations. Since we only have a finite alphabet in our use, we encode the level as the height of another geometric shape attached to each triangle, and use a set of signals to propagate them correctly. The new signals also have the secondary function of making the whole subshift southwest deterministic, since they control the creation of successors and children of the red triangles. Recall that a red triangle of $X_3$ need not have a successor nor a child; after the construction of the second layer is complete, their existence is determined by the width and level of the triangle.

We proceed with the definition of the second layer, $X_4$, which will be more technical than the first. First, we mark the east and south borders of each red triangle with a special border color, called the \emph{frame}. The first and second layer are independent, except that the frame, the light blue half plane and the southwest corners of light red triangles are visible in the second layer, and the second layer overlays some colors on the frame, which determine the formation of children and successors in the first layer. As with the first layer, the second layer will consist of geometric shapes of different colors, separated by border lines. The main colors are background (shown as white), yellow, dark yellow, orange, dark orange and blue. The only color of the second layer that can appear on top of the light blue half plane of the first layer is background. The configurations of $X_4$ are natural generalizations of the configuration in Figure~\ref{fig:Chain}, in the sense that the allowed corners and slopes between the uniformly colored areas are those visible in the figure (although not all possible interactions between discrete lines are shown). There is some freedom in the choice of the generalization, but the relevant implementation details are mentioned below. This SFT is glued to the first layer along the frame.

\input{ChainPicLevel2}

We now describe the different patterns seen in Figure~\ref{fig:Chain} and explain how the second layer works. The yellow-blue, dark yellow, orange and dark orange stripe extending to the northwest from the frame in each configuration of $X_4$ is called the \emph{level stripe}. The signals emitted in the direction $(-2,1)$ by the southwest and southeast corners of each red triangle (points $A$ and $B$ in Figure~\ref{fig:Chain}, respectively) are called \emph{guide signals}, since they guide the north and west borders of the level stripe. Each red triangle $T$ is assigned a number $L(T) \in \N$, called its \emph{level}, which is just the thickness of the level stripe attached to its south border (equivalently, half the thickness of the vertical level stripe attached to its east border), not counting one of the borders of the stripe. In Figure~\ref{fig:Chain}, the level of the triangle whose southeast corner is $B$ is $8$. Levels of triangles are analogous to the numbers in $\tilde x$. We do not allow levels below $8$ (for technical reasons that will be clear later), which can be enforced in $X_4$ by SFT rules. For convenience, we also define $W(T)$ as the width of $T$. The signals emitted in the direction $(-2,1)$ by the southwest corners of light red triangles (for example, point $C$ in Figure~\ref{fig:Chain}) are called \emph{branch signals}. Note that the branch signal \emph{moves two steps to the west} and turns to the direction $(0,-1)$ when it hits the north border of the level stripe (at point $D$).

Let $T$ be the red triangle whose southeast corner is $B$. If $T$ had no child, the south branch signal would be destroyed by the frame, as happens with the smaller triangles in Figure~\ref{fig:Chain}. Otherwise, it continues south and becomes the west border of the level stripe of the child $T'$ of $T$. As we mentioned above, as the northwest branch signal hits the north border of the level stripe of $T$ and turns to the south, it is also shifted two steps to the west. Thus, we have $L(T') = L(T) + 1$, or in other words, the level of a red triangle is one greater than the level of its parent (if one exists), or equal to the level of its predecessor. Since the set of possible levels has a lower bound, in particular in each chain $(T_i)_{i \in \N}$ of red triangles such that $T_{i+1}$ is either the predecessor or parent of $T_i$, there is only a finite number of parent relations. If the downward branch signal does not hit the south border of $T$ (so that the southwest corner of $T$ is on the border of an orange area), the southwest corner does not create a yellow area, but its guide signal instead destroys the level stripe of $T$. We call this the \emph{elimination rule}, and Figure~\ref{fig:Chain} contains an example of it at point $E$. The elimination rule is needed only to ensure the countability of the final SFT.

From the top of every red triangle $T$, we shoot a \emph{decision signal} to the southwest direction $(-1,-1)$, which then bounces from the west border of the vertical level stripe of $T$ to the direction $(1,-2)$, then southwest again, and so on. This is the border of the yellow and blue areas in Figure~\ref{fig:Chain}. Its purpose is to calculate, based on the level and size of $T$, whether it should have a child and/or a successor. In $X_4$, the frame contains a finite counter. At the top of every red triangle $T$, the counter of the southward frame segment is initialized to $0$. The counter is then incremented by $1$ every time the decision signal hits it, up to the maximum of $2$, and this number is transmitted to the south border of $T$. For example, in Figure~\ref{fig:Chain}, the counter is set to $0$ at point $F$ and is incremented twice on its way toward point $B$, so the value in the segment $AB$ is $2$. A successor (child) for $T$ is created if and only if the number is at least $1$ (exactly $2$, respectively). The number in the south border is called the \emph{determinant} of $T$, since we use it to determine whether $T$ has a successor or a child. Since the height of $T$ is $2W(T)$, a direct calculation shows that
\begin{enumerate}
\item if $W(T) < 3L(T)$, then the determinant of $T$ is $0$,
\label{item:One}
\item if $3L(T) \leq W(T) < 6L(T)$, then the determinant of $T$ is $1$, and
\label{item:Two}
\item if $6L(T) \leq W(T)$, then the determinant of $T$ is $2$.
\label{item:Three}
\end{enumerate}
Then, $T$ has no child or successor, only a successor, or both a child and a successor, depending on the respective value of its determinant. Now, the SFT $X$ combined from the two layers $X_3$ and $X_4$ is southwest deterministic. We are now finished with the construction of our SFT.

Next, we show that the combined SFT $X$ contains interesting infinite configurations. Let $(a,b) \in \Z^2$, and let $\ell, k \in \N$ be such that $\ell \geq 8$ and
\begin{equation}
\label{eq:IndHyp}
\frac{5}{3} 2^k \geq \ell \geq 8.
\end{equation}
Define $P(a,b,k,\ell)$ as the left infinite pattern over the alphabet of $X$ whose domain equals $S(a,b,k) = (-\infty, a] \times [b, b+2^{k+1}-1]$ containing a red triangle $T$ of width $w = 2^k$ and level $\ell$ at its east border, and white everywhere else (the southeast corner of $T$ is at $(a,b)$, which corresponds to point $B$ in Figure~\ref{fig:Chain}). We have denoted by $\alpha$ the line segment from $(a,b)$ to $(a,b+2^{k+1}-1)$ in Figure~\ref{fig:InductionPic}. Equation~\eqref{eq:IndHyp} guarantees that the horizontal level stripe of $T$ fits inside the pattern $P(a,b,k,\ell)$, and the choice for the constant $\frac{5}{3}$ is somewhat arbitrary. Now, the pattern $P(a,b,k,\ell)$ is locally valid, in the sense that it contains no forbidden pattern of $X$, and if it can be extended to the quarter plane $Q(a,b,k) = (-\infty, a] \times (-\infty, b+2^{k+1}-1]$, this extension is unique. The uniqueness follows from the facts that every coordinate $(a,c)$ with $c < b$ must be colored light blue and $X$ is southwest deterministic. We denote by $E(a,b,k,\ell)$ this unique extension, if it exists. Now, we wish to prove, by induction on $k$, that the extension $E(a,b,k,\ell)$ always exists, and that all its coordinates outside the rectangle $R(a,b,k,\ell) = [a-2^{k+1}-2\ell, a] \times [b-2^{k+1}, b+2^{k+1}-1]$ are colored with either white, light blue or the border color of the light blue half plane. In the following, we call these colors \emph{uninteresting}.

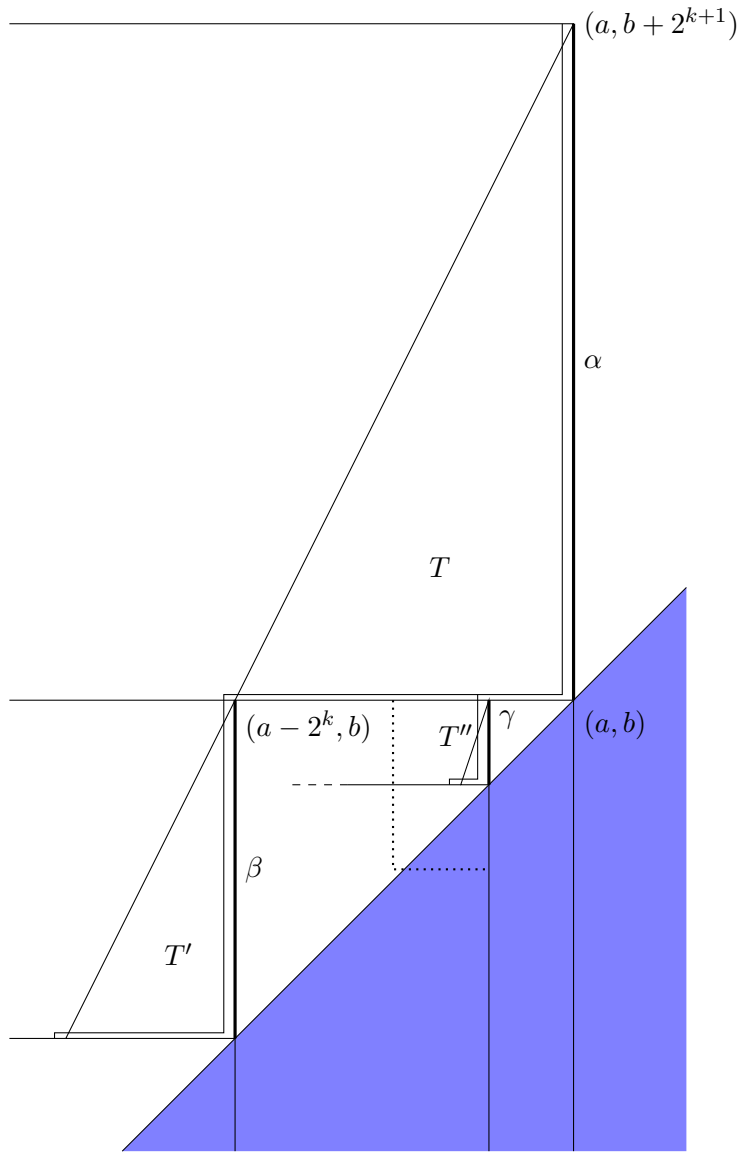
\begin{figure}
\begin{center}
\begin{tikzpicture}[scale = 1.5]

\fill[blue!50] (0,0) -- (5,0) -- (5,5);
\draw (0,0) -- (5,5);

\draw (4,0) -- (4,10) -- (-1,10);
\draw (4,4) -- (1,4) -- (1,1) -- (-.5,1) -- (1,4) -- (4,10);
\draw[very thick] (4,4) -- (4,10);
\draw (-.5,1) -- (-1,1);
\draw (1,0) -- (1,4) -- (-1,4);
\draw[very thick] (1,1) -- (1,4);
\draw (3.25,4) -- (3.25,3.25) -- (3,3.25) -- cycle;
\draw[very thick] (3.25,3.25) -- (3.25,4);
\draw (3.25,0) -- (3.25,4);
\draw (3,3.25) -- (2,3.25);
\draw[dashed] (2,3.25) -- (1.5,3.25);
\draw (3.15,4.05) -- (3.15,3.3) -- (2.9,3.3) -- (2.9,3.25);

\draw (3.9,10) -- (3.9,4.05) -- (.9,4.05) -- (.9,1.05) -- (-.6,1.05) -- (-.6,1);

\draw[thick,dotted] (2.4,4) -- (2.4,2.5) -- (3.25,2.5);

\node[right] () at (4,7) {$\alpha$};
\node[right] () at (1,2.5) {$\beta$};
\node[below right] () at (3.25,4) {$\gamma$};

\node[above left] () at (3,5) {$T$};
\node[] () at (.5,1.75) {$T'$};
\node[above left] () at (3.2,3.5) {$T''$};

\node[below right] () at (4,4) {$(a,b)$};
\node[right] () at (4,10) {$(a,b+2^{k+1})$};
\node[below right] () at (1,4) {$(a-2^k,b)$};

\end{tikzpicture}
\end{center}
\caption{The three red triangles appearing in the induction in the proof of Theorem~\ref{thm:Chain}.}
\label{fig:InductionPic}
\end{figure}

First, consider the case $w < 3 \ell$. Now the determinant of $T$ is $0$ by item~\ref{item:One} above, and the deterministic rule of $X$ creates no other triangles in the quarter-plane, so that the extension clearly exists. The claim about the rectangle $R(a,b,k,\ell)$ also holds, since every color below the horizontal line $y = b$ is uninteresting, and the westmost coordinate of the level stripe of $T$ is $(a-w-2\ell,b+\ell)$, which is inside the rectangle.

In the case $3 \ell \leq w < 6 \ell$, the determinant of $T$ is $1$ by item~\ref{item:Two}. Then, a successor $T'$ of $T$ is created by the deterministic rule assuming that the elimination rule is not triggered. We show that the assumption $3 \ell \leq w$ guarantees it is not. Namely, the branch signal emitted by the point $C$ in Figure~\ref{fig:Chain} moves $2\ell + 2$ steps to the west before turning south. The length of the segment $AC$ is $\frac{3}{4} w \geq \frac{9}{4} \ell \geq 2\ell + 2$, since $\ell \geq 8$ (here we need the restriction that levels below $8$ are not allowed). This means that the elimination rule will not be triggered, as the south branch signal hits the south border of $T$. Now, by the SFT rules, $W(T') = \frac{w}{2} = 2^{k-1}$ and $L(T') = \ell \leq \frac{w}{3} < \frac{5}{3} 2^{k-1}$. The southeast corner of $T'$ is at $(a-w, b-w)$, and the pattern $P(a-w,b-w,k-1,\ell)$ (the area to the left of the line segment $\beta$ in Figure~\ref{fig:InductionPic}) is compatible with $P(a,b,k,\ell)$ in the sense that their union contains no forbidden pattern. The condition of~\eqref{eq:IndHyp} is satisfied by $k-1$ and $\ell$, so by the induction hypothesis, the extension $E(a-w,b-w,k-1,\ell)$ exists. Since the determinant is not $2$, $T$ has no child, and the area below $T$ is filled with the uninteresting colors. Since $R(a-w,b-w,k-1,\ell) \subset R(a,b,k,\ell)$, the claim about the rectange also follows from the induction hypothesis.

If $6 \ell \leq w$, then both a successor $T'$ and a child $T''$ should be created by the deterministic rule. As above, the extension $E(a-w,b-w,k-1,\ell)$ for $T'$ exists and is compatible with $P(a,b,k,\ell)$, so we focus on the hypothetical child $T''$. We have $W(T'') = \frac{w}{8} = 2^{k-3}$ and $L(T'') = \ell+1 \leq \frac{w}{6}+1 < \frac{5}{3} 2^{k-3}$, and the southeast corner of $T''$ is at $(a-2^{k-2},b-2^{k-2})$. Consider the pattern $P(a-2^{k-2},b-2^{k-2},k-3,\ell+1)$, that is, the stripe containing $T''$ (see $\gamma$ in Figure~\ref{fig:InductionPic}), whose domain overlaps with the extension of $T'$. Since $k-3$ and $\ell+1$ satisfy~\eqref{eq:IndHyp}, the extension $E(a-2^{k-2},b-2^{k-2},k-3,\ell+1)$ of $T''$ exists by the induction hypothesis, and all its interesting colors occur in the rectangle $R(a-2^{k-2},b-2^{k-2},k-3,\ell+1)$. Now, this rectangle does \emph{not} overlap with the extension of $T'$, since the $x$-coordinate of its west border is $a - 2^{k-1} - 2 \ell - 2 > a - 2^k$ by the inequality $2 \ell + 2 \leq 2\frac{w}{6} + 2 < 2^{k-1}$. The dotted rectangle in Figure~\ref{fig:InductionPic} represents the rectangle of $T''$. This also implies that the horizontal level stripe of $T''$ and the south border of $T$ do not interact. Thus the pattern obtained by restricting the extension of $T''$ into the rectangle $R(a-2^{k-2},b-2^{k-2},k-3,\ell+1)$ is locally consistent with both the stripe of $T$ and the extension of $T'$. The rest of the quarter plane $Q(a,b,k)$ of $T$ can be consistently filled with the uninteresting colors, and we have obtained the extension $E(a,b,k,\ell)$ of $T$. The claim about the rectangle $R(a,b,k,\ell)$ follows as in the previous case.

Together with an elementary compactness argument, the above induction implies that for all $\ell \geq 8$, there exists a valid configuration $x_\ell \in X$ containing an infinite chain $(T_i)_{i \in \N}$ of red triangles such that each $T_{i+1}$ is the predecessor of $T_i$, the level of each $T_i$ is $\ell$, and the width of each $T_i$ is a power of $2$. To each $T_i$ we associate the maximal chain $(T_{i,j})_{m_i \leq j \leq M_i}$ of red triangles of level $\ell + 1$ such that each $T_{i,j+1}$ is the predecessor of $T_{i,j}$ and $T_{i,M_i}$ is a child of $T_i$. We see that $m_i$ does not depend on $i$ for large enough $i$, and that $M_i$ grows without bound with $i$. This implies that the configuration $x_{\ell + 1} \in X$ is strictly below $x_\ell$ in the subpattern order, since all the initial segments of its respective infinite chain $(T'_i)_{i \in \N}$ can be found in $x_\ell$ as the $(T_{i,j})$. This concludes the proof for the existence of the infinite downward chain in $SP(X)$.

Finally, we prove the countability of $X$. Let $x \in X$ be a valid configuration. The set of configurations containing only infinite red triangles (and thus at most one triangle) in its first layer is seen to be countable by the following case analysis. On the second layer of these configurations, there are no finite segments of the frame. The set of level stripes attached to infinite frame lines can be enumerated simply by the thickness of the signals (which may be infinite), the phase of the decision signal, and the contents of the frame, and thus their set of possibilities is countable.

We then claim that there is at most one maximal dark orange component which is not attached to any frame, and for contradiction we assume there are two, denoted by $O, O' \subset \Z^2$. Now, neither $O$ nor $O'$ has a south border, so for all $\vec n \in O$ ($\vec n \in O'$), we have $\vec n + (2,-1) \in O$ ($O'$, respectively). Thus, there exists a horizontal line that both $O$ and $O'$ intersect, and we can assume that $O$ lies to the west of $O'$. Thus, for all $b \in \Z$ less that some constant $B \in \Z$, we can define $a_b, a'_b \in \Z$ such that $(a_b,b) \in O$, $(a'_b,b) \in O'$ and the distance $a'_b - a_b$ is minimal. In fact, $\delta = a'_b - a_b$ does not depend on $b$ by the shapes of the sets $O$ and $O'$. We may assume that $\delta$ is minimal with respect to all pairs of such dark orange components in $x$. Now, the colors of the line segment between $(a_b,b)$ and $(a'_b,b)$ must be yellow, then dark yellow, and finally orange, in that order (this is forced by the local rules generated by Figure~\ref{fig:Chain}, in particular the elimination rule). Let $c_b, d_b \in \Z$ denote the west and east ends of the dark yellow segment, respectively. Now, it is easy to see that $c_{b-1} = c_b + 2$ and $d_{b+1} = d_b$ hold for all $b \leq B$, so if we denote $e = \left\lceil \frac{1}{2}(d_B - c_B) \right\rceil + 1$, then the segment between $a_{B-e}$ and $a'_{B-e}$ does not contain a dark yellow segment, a contradiction. Thus, there is at most one maximal dark orange component not attached to a frame, and a simple case analysis then shows that the set of all configurations of the second layer without a frame is countable.

On the other hand, suppose $x$ contains at least one finite red triangle in its first layer. Since every finite red triangle must have either a parent or a predecessor, its level is necessarily finite. Let $\ell \geq 8$ be the minimal level of a finite red triangle appearing in $x$, and let $T$ be the smallest red triangle of level $\ell$ appearing in $x$. Without loss of generality we can suppose that the southwest corner of $T$ lies at $(W(T), 2W(T))$. Then, since $T$ determines the positions and levels of its infinite tree of predecessors and their children, the whole of $x|_A$ is determined, where
\[ A = \{ (x,y) \in \Z^2 \;|\; x \geq W(T), x \leq y \leq 2x \}. \]
By the rules of $X_3$, every coordinate to the north of $A$ must be colored white. Consider then the diagonal coordinates $\{ (x,x) \in \Z^2 \;|\; x < W(T) \}$. Attached to this set we may have one red triangle, which must be infinite: if it were finite, the southeast corner of some red triangle in its predecessor/parent chain would be within $A$, a contradiction. The level and position of this infinite triangle, together with the phase of its decision signal and the content of its frame, constitute a countable number of choices for $x$, and after these choices, the second layer is also determined completely.
\end{proof}

We shortly describe the poset structure of $X$ in more detail. At the top of $X$ is the infinite configuration $x_8$ with the smallest possible level, together with those configurations where a single infinite red triangle lies to the west of the inductive structure of some $x_n$. For all $n \geq 8$, we of course have $x_n \succ x_{n+1}$ by the above proof. Directly below $x_n$ is also the configuration $y_n$ with a single infinite red triangle of level $n$. Finally, below all the $x_n$ and $y_n$ we find the configuration with a single infinite red triangle of infinite level, together with the more degenerate ones (containing, for example, nothing but the border of two infinite colored areas).

\subsection{No infinite downward chain with the \PProperty}

We now prove that an SFT with the downward \PProperty cannot be used for this construction, even if countability \emph{or} downward determinism is relaxed. If both the assumption of countability and the assumption of downward determinism are removed, an infinite downward chain is trivially possible, examplified by the SFT
\[ \{x \in \{0,1\}^{\Z^2} \;|\; x = \sigma^{(1,0)}(x)\}. \]

Instead of proving the theorem for the \PProperty directly, we prove the more natural general result that if an SFT is either deterministic with countable projective subdynamics, or is itself countable, then its chains cannot be much longer than the Cantor-Bendixson rank of its projective subdynamics.

\begin{definition}
Let $\lambda$ be an ordinal. An SFT $X$ has the PCB$(\lambda)$ property if its horizontal projective subdynamics is ranked and has Cantor-Bendixson rank at most $\lambda$.
\end{definition}

By Proposition~\ref{prop:SoficFinite} and Lemma~\ref{lem:OrderPreserving}, the \PProperty implies the PCB$(r)$-property for some $r \in \N$. In the next definition, the notation $\sigma^*(x)$ for a configuration $x \in S^\Z$ stands for the \emph{orbit} of $x$, that is, the set $\{ \sigma^n(x) \;|\; n \in \Z \}$.

\begin{definition}
Let $X \subset S^{\Z^2}$ be an SFT, and denote $Y = \PS(X)$. We say $X$ has the \emph{R property} if for all $y \in Y$, there exists $z \in Y^k$ such that $z_1 = y$, $z_k \in \sigma^*(y)$ and $z_i \notin \sigma^*(y)$ for all $i \in [2, k-1]$ with the following property: For any configuration $x \in X$ and indices $m < n \in \Z$ such that $x_n, x_m \in \sigma^*(y)$, but $x_i \notin \sigma^*(y)$ for all $i \in [m+1, n-1]$, we must have $x_{[m, n]} \in \sigma^*(z)$.
\end{definition}

In the above definition, configurations $z$ of the subshift $Y^k$ consist of $k$ configurations of $Y$ stacked on top of each other, and the notation $z_i$ refers to the $i$th of these configurations. Intuitively, $X$ having the R property means that if two rows of $X$ have the same content up to a shift, and no other rows between them do, then this shift and all the rows between them are uniquely determined. By induction, it follows that if a horizontal row repeats up to a shift, the rows in between are taken from a configuration of $X$ with a period $\vec n$ whose y-coordinate is nonzero.

\begin{lemma}
\label{lem:RProperty}
Let $X \subset S^{\Z^2}$ be a countable or downward deterministic SFT defined by Wang tiles. Then, $X$ has the R property.
\end{lemma}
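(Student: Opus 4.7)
We argue by contradiction in both cases of the disjunction. Call $z \in Y^k$ a \emph{minimal return pattern} for $y$ if $k \geq 2$, $z_1 = y$, $z_k \in \sigma^*(y)$, $z_i \notin \sigma^*(y)$ for $1 < i < k$, and $z$ occurs in some $x \in X$. If no such $z$ exists, then taking $k = 1$ and $z = (y)$ trivially witnesses the R property; so assume at least one exists, and suppose for contradiction that there are two minimal return patterns $z, z'$ for $y$ with $z \notin \sigma^*(z')$.

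For the countable case the strategy is to construct $2^{\aleph_0}$ distinct configurations of $X$, contradicting countability. Minimal return patterns concatenate vertically: if $z$ has top row $\sigma^a(y)$, then $\sigma^{-a}(z)$ has top row $y$ and glues directly beneath any row holding $y$. Thus each binary sequence $b \in \{0,1\}^{\N}$ specifies a downward-infinite stack beginning from $y$ at row $0$ in which, at the $i$th step, we place an appropriately shifted copy of $z$ or of $z'$ according to $b_i$. Each finite prefix is a valid finite pattern of $X$ because the gluing rows are literal copies of shifts of $y$, so by compactness each $b$ yields a configuration $x^b \in X$, completed by any fixed valid upward extension of $y$ (which exists since $y \in \PS(X)$). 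The hypothesis $z \notin \sigma^*(z')$ guarantees that differing binary sequences produce configurations with distinguishable content in the corresponding stacked band, so $b \mapsto x^b$ is injective and $|X| \geq 2^{\aleph_0}$, contradicting countability.

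For the downward deterministic case, let $i$ be a determinism radius, so $x_0$ is forced by $x|_{[-i,i] \times [1,i]}$ and by iteration the entire lower half-plane of any configuration is determined by its upper half-plane. Form the singly-periodic stackings $\tilde z, \tilde{z'} \in X$ (with vertical periods $k-1, k'-1$ and accumulating horizontal shifts), both carrying $y$ at row $0$ and shifts of $y$ at regular vertical intervals above. The plan is to locate, inside the upper halves of $\tilde z$ and $\tilde{z'}$, a pair of matching height-$i$ windows sitting above rows in $\sigma^*(y)$; such a coincidence exists by pigeonhole on the finite Wang tile alphabet, since $\sigma^*(y)$ reappears arbitrarily high in each periodic stacking. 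Downward determinism then propagates the matching windows into identical content below row $0$, and combined with shift invariance this forces the minimal return patterns beneath the two chosen anchor rows to agree up to horizontal shift, contradicting $z \notin \sigma^*(z')$. The main obstacle is precisely this last step: the upper halves of $\tilde z$ and $\tilde{z'}$ are genuinely different in general, so producing the shift-aligned matching windows and then propagating them rigorously through downward determinism requires delicate use of the periodic structure of both stackings together with the finiteness of the Wang tile alphabet.
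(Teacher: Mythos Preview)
Your countable case is correct and matches the paper's argument: both build uncountably many configurations by freely concatenating the two return patterns $z, z'$ along the shared row $y$.

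Your deterministic case, however, has a genuine gap, and you essentially acknowledge it yourself. The pigeonhole step does not work: in the periodic stacking $\tilde z$, the height-$i$ block sitting above \emph{every} occurrence of a shift of $y$ is, after undoing the horizontal shift, always the \emph{same} block $(z_2,\dots,z_{i+1})$ (wrapping periodically if $i \geq k$). Likewise in $\tilde z'$ it is always $(z'_2,\dots,z'_{i+1})$. Pigeonhole on a finite alphabet gives repetitions \emph{within} one sequence, not coincidences \emph{between} two fixed sequences; if these two blocks differ, they differ everywhere, and no amount of periodic structure produces a match.

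The paper's argument avoids this entirely by exploiting the Wang tile hypothesis in a way you missed. For Wang tiles, the only inter-row constraint is between vertically adjacent rows. Hence if $x, x' \in X$ both have $x_0 = x'_0 = y$, the spliced configuration $x''$ with $x''_j = x_j$ for $j \geq 0$ and $x''_j = x'_j$ for $j < 0$ is again locally valid, so $x'' \in X$. Downward determinism then forces $x'' = x$, i.e.\ $x'_j = x_j$ for all $j < 0$. In other words, a single row $y$ already determines every row below it. The uniqueness of the minimal return pattern is then immediate: whatever sits below $y$ in one configuration must sit below $y$ in all of them. This is the missing idea; once you see it, no periodic stackings or window-matching are needed.
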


\begin{proof}
To show this, let $y \in Y = \PS(X)$, and let $t \in \Z$ and $x \in X$ be such that $x_i = y = \sigma^{-t}(x_j)$, where $j - i > 0$ is minimal. Define also $z = x_{[i, j]}$. First, suppose $X$ is downward deterministic. Then the claim is obvious: as it is possible for the sequence of rows $z_{[2, j - i + 1]}$ to appear under the row $y$, it is the \emph{only} possible sequence of rows under $y$, and thus repeats infinitely downward, always shifting by $t$ steps when it repeats.

Now, suppose $X$ is countable. We need to show that $z$ is the only word (of rows) in $Y^* = \bigcup_{k \in \N} Y^k$ such that $z_1 = y$, $z_k \in \sigma^*(y)$ and $z_i \notin \sigma^*(y)$ for all $i \in [2, k-1]$. But this is obvious as well, as if we assume the contrary, and let $z'$ be another such word, then $\{z, z'\}^\Z$ is an uncountable set of valid tilings of $X$. 
\end{proof}

\begin{lemma}
\label{lem:ShortChains}
If $X \subset S^{\Z^2}$ is an SFT with the R property with countable projective subdynamics and $x \in X$ has a period, then there do not exist $y, z \in X$ such that $x > y > z$.
\end{lemma}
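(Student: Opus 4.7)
Our plan is to descend the purported chain $x > y > z$ in $\SP(X)$ to a length-three chain in $\SP(\PS(X))$, and then contradict that via the one-dimensional structure from Lemma~\ref{lem:1DBoringPoset}. First, since $\B(y) \subset \B(x)$, a compactness argument on large patterns of $y$ occurring in $x$ places $y$ inside the orbit closure $\overline{\mathcal{O}(x)}$, and similarly $z \in \overline{\mathcal{O}(x)}$. Because $\sigma^{\vec n} w = w$ is a closed condition in the product topology, both $y$ and $z$ inherit the period $\vec n$ of $x$. If $x$ is doubly periodic, then $\mathcal{O}(x)$ is finite and closed, so $y$ is a translate of $x$ and $\B(y) = \B(x)$, contradicting $y < x$. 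So assume $\vec n = (p,q)$ generates the period lattice; the substantial case is $q \neq 0$, while the horizontal case $q = 0$ is handled analogously using that all rows of $x$ are already $p$-periodic and the R property directly forces the finitely many row-orbits.

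In the main case, rows of $x$ lie in $|q|$ horizontal shift-orbits $\sigma^*(x_0), \ldots, \sigma^*(x_{|q|-1})$ in $\PS(X)$. For any row $y_i$ of $y$, every finite subword of $y_i$ appears in $x$ and hence in some $x_j$, so pigeonhole plus compactness yield $y_i \in \overline{\sigma^*(x_{j(i)})}$ for some index $j(i)$, giving $y_i \leq x_{j(i)}$ in $\SP(\PS(X))$. The core rigidity step is the following: if some $y_i$ actually lies in the orbit $\sigma^*(x_{j(i)})$, then since $y$ shares the period $(p,q)$ of $x$, rows in $\sigma^*(x_{j(i)})$ recur in $y$, and the R property applied with row $x_{j(i)}$ forces the intervening rows of $y$ to match the canonical return pattern determined by $x_{j(i)}$. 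That same pattern appears in $x$ between successive $\sigma^*(x_{j(i)})$-rows, so by iterating up and down, $y$ must equal a translate of $x$, contradicting $y < x$. Hence $y_i < x_{j(i)}$ strictly in $\SP(\PS(X))$ for every $i$, and the identical reasoning applied to the pair $z < y$ yields $z_k < y_{j'(k)}$ strictly in $\SP(\PS(X))$ for every $k$.

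Composing these two strict inequalities produces a strict chain of length three $z_k < y_{j'(k)} < x_{j(j'(k))}$ in $\SP(\PS(X))$. By Lemma~\ref{lem:1DBoringPoset}, the middle element $y_{j'(k)}$ would have to be periodic (as a strict upper bound of $z_k$ which is, by the same lemma, required to have a periodic lower bound of an eventually periodic upper bound), and then the further strict descent $z_k < y_{j'(k)}$ with $y_{j'(k)}$ already periodic would force $z_k$ to share a tail with $y_{j'(k)}$ up to a shift, hence $z_k \in \sigma^*(y_{j'(k)})$, contradicting the strictness. The principal obstacle lies in this last paragraph: Lemma~\ref{lem:1DBoringPoset} is stated for countable sofic shifts, while $\PS(X)$ is only assumed countable. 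Closing this gap will require either adapting the lemma to countable subshifts (via Lemma~\ref{lem:UniqueParsing} applied to relevant sofic subsystems, which in turn requires establishing soficity of the orbit closures $\overline{\sigma^*(x_j)}, \overline{\sigma^*(y_{j'})}$ inside $\PS(X)$), or leveraging the R property directly to extract the needed periodicity and tail-sharing properties without routing through Lemma~\ref{lem:1DBoringPoset}.
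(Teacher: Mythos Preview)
Your overall plan—push the chain $x>y>z$ down to a one-dimensional chain and invoke Lemma~\ref{lem:1DBoringPoset}—is sensible, and your use of the R property to show that no row of $y$ can lie in the shift-orbit of a row of $x$ is correct. But there are two genuine gaps, one of which you flag and one you do not.

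The unflagged gap is the step ``$y_i\notin\sigma^*(x_{j(i)})$, hence $y_i<x_{j(i)}$ strictly in $\SP(\PS(X))$.'' Strict inequality in the subpattern poset means $\B(y_i)\subsetneq\B(x_{j(i)})$, not merely that $y_i$ is outside the orbit of $x_{j(i)}$. In a countable subshift there is no a priori reason why two points with the same language must lie in the same orbit, so this implication needs an argument you have not supplied. The gap you do flag—that Lemma~\ref{lem:1DBoringPoset} is stated for countable sofic shifts while $\PS(X)$ is merely countable—is equally real, and your proposed fixes (showing soficity of the relevant orbit closures, or bypassing the lemma via the R property) are not carried out.

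The paper closes both gaps with a single observation that you are missing: when $\vec n=(p,q)$ with $q\neq 0$, the set $Y=\{w_{[1,|q|]}\;|\;w\in X,\ \sigma^{\vec n}w=w\}$ is a countable one-dimensional \emph{SFT} over the alphabet $S^{|q|}$ (countable because $\PS(X)$ is, SFT because the period constraint together with the local rules of $X$ is finitely specified). Since you already noted that $y$ and $z$ inherit the period $\vec n$, all three slices $x_{[1,|q|]},y_{[1,|q|]},z_{[1,|q|]}$ live in $Y$, and now the sofic machinery (Lemma~\ref{lem:UniqueParsing} and Lemma~\ref{lem:1DBoringPoset}) applies legitimately. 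The paper uses this to show directly that $x$ is horizontally eventually periodic, after which the chain $x>y>z$ is ruled out by the same reasoning as in the one-dimensional case. With the auxiliary SFT $Y$ in hand your row-by-row descent could also be completed, but at that point the two arguments essentially coincide.
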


\begin{proof}
Suppose $x \in X$ has the period vector $\vec n \in \Z^2$. If $\vec n$ has a zero y-coordinate, this means that the rows are periodic, so there are only finitely many different rows in $x$. Some row must then repeat infinitely many times upwards and some row must repeat infinitely many times downward. Since $X$ has the R property, $x$ is in fact horizontally periodic and vertically eventually periodic, and the claim is proved similarly to Lemma~\ref{lem:1DBoringPoset} (although we have not given enough machinery to apply it directly).

Now, assume that $\vec n$ has a nonzero y-coordinate $a$, which we may assume to be positive. As $\PS(X)$ is countable,
\[ Y = \{ z_{[1,a]} \;|\; z \in X, z = \sigma^{\vec n}(z) \} \]
is a countable one-dimensional SFT (since we restrict to the configurations with period $\vec n$). Since $x_{[1,a]} \in Y$, $x$ is horizontally eventually periodic, and the claim follows as above.
\end{proof}

Using the above lemma, we prove an upper bound for the downward chains occurring in an SFT with the R property, in terms of the rank of its projective subdynamics. As a corollary, we obtain the result that infinite downward chains cannot occur in countable SFTs with the \PProperty.

\begin{proposition}
\label{prop:ShortChains}
Let $\lambda$ be an ordinal, and let $X$ be an SFT with the properties R and PCB$(\lambda)$. Then $X$ does not contain a proper downward chain of length $\lambda+2$.
\end{proposition}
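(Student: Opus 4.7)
The plan is transfinite induction on $\lambda$, with Lemma~\ref{lem:ShortChains} as the base case and the R~property to propagate the bound.

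For $\lambda = 0$, $Y = \PS(X)$ is empty, so $X = \emptyset$ and the statement is vacuous. For $\lambda = 1$, $Y$ is finite, and the R~property forces every $x \in X$ to be periodic: by pigeonhole on the bi-infinite sequence of row orbits, some orbit $\sigma^*(y)$ recurs in $x$, and R pins down both the constant vertical gap $k-1$ and the constant horizontal shift $t$ between consecutive occurrences of $\sigma^*(y)$, giving $x$ a period $(-t, k-1) \neq (0,0)$. Lemma~\ref{lem:ShortChains} then forbids chains of length $3 = \lambda + 2$.

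For the inductive step, assume the result for every $\beta < \lambda$ and suppose for contradiction that $x_0 > x_1 > \cdots > x_{\lambda + 1}$ is a chain of length $\lambda + 2$ in $X$. For each $i$, the subshift $X_i := \overline{\mathcal{O}(x_i)} \subseteq X$ satisfies $\PS(X_i) = R(x_i) \subseteq Y$ of rank $\beta_i \leq \lambda$, and it inherits the R~property since the word $z$ supplied by R uses only rows occurring in configurations of $X_i$. Applying the inductive hypothesis to $X_i$ whenever $\beta_i < \lambda$ bounds chains in $X_i$ by $\beta_i + 1$; applied to the sub-chain $x_i > \cdots > x_{\lambda + 1}$ of length $\lambda + 2 - i$, this yields a contradiction when $\beta_i + 1 < \lambda + 2 - i$, and in particular forces $\beta_0 = \beta_1 = \lambda$.

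The hard part is ruling out the remaining case, in which $R(x_0)$ still has the full rank~$\lambda$. I would restrict to the successor case $\lambda = \mu + 1$ (the limit case reducing to a successor since $Y$ is compact, and a nonempty compact ranked space has successor rank), so that $R(x_0)^{(\mu)}$ is a nonempty finite union of periodic orbits. Since the chain of length $\geq 3$ below $x_0$ forces $x_0$ to be non-periodic by Lemma~\ref{lem:ShortChains}, the same R-property argument as in the base case shows that any periodic $r \in R(x_0)^{(\mu)}$ actually occurring as a row of $x_0$ does so at positions forming an arithmetic progression bounded in at least one vertical direction. By Lemma~\ref{lem:OpenDerivative}, $R(x_0) \setminus R(x_0)^{(\mu)}$ is an open subset of $R(x_0)$ of rank at most $\mu$; taking a vertical-shift limit $x_0^* \leq x_0$ of $x_0$ in a direction escaping all such periodic-row positions produces a configuration whose rows lie in this open set, so that $\overline{\mathcal{O}(x_0^*)}$ has PCB$(\mu)$. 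The remaining obstacle is to perform this limit by a diagonal construction that simultaneously produces limits of the lower chain elements $x_1, \ldots, x_{\lambda + 1}$ preserving the chain strictly below $x_0^*$, after which the inductive hypothesis at rank $\mu$ applied to $\overline{\mathcal{O}(x_0^*)}$ delivers the contradiction.
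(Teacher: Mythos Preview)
Your approach via induction on $\lambda$ is quite different from the paper's, and several steps do not go through. The paper does not induct on $\lambda$ at all; instead it runs a transfinite induction \emph{along the chain}, showing directly that $\min_m \rank_{\PS(X)}(x^\alpha_m) \geq \alpha$. The key dichotomy is: if some row $y$ of $x^\alpha$ is isolated in the subshift generated by $\rows(x^\beta)$ for some $\beta < \alpha$ (where $x^\beta > x^\alpha$), then the isolating word must occur in infinitely many rows of $x^\beta$, the R~property makes $x^\beta$ vertically eventually periodic up to horizontal shift, and $x^\alpha$ (being a proper limit of shifts of $x^\beta$ through those periodic parts) is itself periodic---so Lemma~\ref{lem:ShortChains} places $x^\alpha$ within two of the bottom of the chain. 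Otherwise every row of $x^\alpha$ is non-isolated in $\overline{\rows(x^\beta)}$ for every $\beta < \alpha$; since by induction $\overline{\rows(x^\beta)} \subset \PS(X)^{(\beta')}$ for all $\beta' < \beta$, non-isolation pushes each row of $x^\alpha$ into $\PS(X)^{(\beta)}$, and taking the supremum over $\beta < \alpha$ gives the rank bound and hence the contradiction with PCB$(\lambda)$.

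Your argument has genuine gaps. First, your induction hypothesis is stated for SFTs, but $X_i = \overline{\mathcal{O}(x_i)}$ is typically not an SFT, and Lemma~\ref{lem:ShortChains} (on which both the base case and the proposition rely) really uses the SFT hypothesis to obtain a countable \emph{sofic} one-dimensional subshift and invoke Lemma~\ref{lem:UniqueParsing}. Second, even if the rows of your limit $x_0^*$ lie in the open set $R(x_0) \setminus R(x_0)^{(\mu)}$, what you need bounded by $\mu$ is the rank of $\PS(\overline{\mathcal{O}(x_0^*)}) = \overline{\rows(x_0^*)}$, and this closure will typically recapture the periodic points of $R(x_0)^{(\mu)}$---they are accumulation points of $R(x_0)$, not isolated ones---so the PCB$(\mu)$ conclusion does not follow. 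Third, the finitely many periodic rows in $R(x_0)^{(\mu)}$ may occur in $x_0$ along arithmetic progressions unbounded in \emph{opposite} vertical directions, so there need be no single direction of escape. Finally, the ``remaining obstacle'' you flag---manufacturing a strict chain of the same length below $x_0^*$---is the heart of the matter, and there is no mechanism for it: passing from $x_0$ to a proper limit $x_0^* < x_0$ can collapse the chain below arbitrarily.
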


\begin{proof}
Assume on the contrary that $(x^\alpha)_{\alpha \leq \lambda+2}$ is such a chain, and consider an arbitrary row $y = x^\alpha_m$, where $\alpha \leq \lambda$ and $m \in \Z$. First, consider the case that $y$ is isolated in the subshift generated by $\rows(x^\beta)$ for some $\beta < \alpha$. Then some word $w \sqsubset y$ isolates $y$ in some $\rows(x^\beta)$ with $\beta < \alpha$, meaning that $w \sqsubset x^\beta$, where $w$ is regarded as a rectangular pattern of height $1$, and every row in $x^\beta$ that contains $w$ is equal to $y$. Because $x^\beta$ is strictly below $x^\alpha$, the pattern $w$ must occur infinitely many times in $x^\beta$, and it then follows from the R property that the rows of $x^\beta$ are eventually periodic up to a horizontal shift. Moreover, the long periodic parts of $x^\beta$ are approximations to $x^\alpha$, so that $x^\alpha$ is periodic with some period vector $\vec n$. By Lemma~\ref{lem:ShortChains}, this is only possible if $\alpha = \lambda + 1$ or $\alpha = \lambda + 2$. Thus, we may restrict to a chain of length $\lambda$ such that for each $y = x^\alpha_m$, $y$ is not isolated in $\rows(x^\beta)$ for any $\beta < \alpha$.

But then, if we let $\lambda_\alpha = \min_{m \in \Z} \rank_{\rows(X)}(x^\alpha_m)$ for all $\alpha \leq \lambda$, it follows from a straightforward transfinite induction that $\lambda_\alpha \geq \alpha$ for all $\alpha \leq \lambda$, and in particular the Cantor-Bendixon rank of $\PS(X)$ is at least $\lambda$. 
\end{proof}

\begin{corollary}
\label{cor:ShortChains}
Let $X$ be a countable or deterministic SFT with the property P. Then for some $k \in \N$, $X$ does not contain a chain of length $k$.
\end{corollary}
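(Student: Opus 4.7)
The plan is to assemble Corollary~\ref{cor:ShortChains} from Proposition~\ref{prop:ShortChains} and Lemma~\ref{lem:RProperty}, using the one-dimensional structural result Proposition~\ref{prop:SoficFinite} to convert the \PProperty into a concrete finite Cantor--Bendixson rank bound.

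First I would invoke the \PProperty to pick a conjugate $\tilde X$ of $X$ with $\PS(\tilde X) \subset Y$ for some countable one-dimensional SFT $Y$. Conjugacy preserves the subpattern poset up to order-isomorphism, along with countability and downward determinism (block maps intertwine the shift action, so they neither swap coordinates nor destroy the vertical direction), and consequently it suffices to bound the chain length in $\SP(\tilde X)$. Being a countable one-dimensional sofic shift, $Y$ has finite Cantor--Bendixson rank by Proposition~\ref{prop:SoficFinite}, and Lemma~\ref{lem:OrderPreserving} transfers this bound to the subspace $\PS(\tilde X)$. Hence $\tilde X$ enjoys the PCB$(r)$ property for some $r \in \N$.

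Next I would establish the R property for $\tilde X$. Lemma~\ref{lem:RProperty} delivers R for Wang-tile SFTs that are countable or downward deterministic, so I would first pass to a Wang-tile conjugate of $\tilde X$ via a standard higher-block recoding, apply the lemma there, and then pull the R property back: R is phrased in terms of rows in $\PS$ and of configurations in the SFT, which are conjugacy-invariant data, so it descends to $\tilde X$ itself. With both PCB$(r)$ and R in hand, Proposition~\ref{prop:ShortChains} produces the conclusion that $\tilde X$ contains no proper descending chain of length $r+2$. Setting $k = r+2$, and noting that any chain of length $k$ in a poset can be linearly ordered and hence reversed into a strict descending chain of the same length, this rules out every chain of length $k$ in $\SP(\tilde X) \cong \SP(X)$.

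The main point requiring some attention is the interplay between the two conjugacy steps: the conjugate $\tilde X$ singled out by the \PProperty need not be presented by Wang tiles, while Lemma~\ref{lem:RProperty} does require a Wang-tile presentation. I expect this to be the only minor obstacle, and it is easily dealt with by observing that the R property depends only on the abstract row-subshift $\PS(\tilde X)$ and the abstract configuration space of $\tilde X$, so the Wang-tile recoding can be performed solely to invoke Lemma~\ref{lem:RProperty} and then discarded. Beyond this bookkeeping, no genuinely new idea beyond Proposition~\ref{prop:SoficFinite}, Lemma~\ref{lem:RProperty} and Proposition~\ref{prop:ShortChains} is required.
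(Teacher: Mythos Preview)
Your proposal is correct and follows exactly the route the paper intends: combine Lemma~\ref{lem:RProperty} (to get the R property) with Proposition~\ref{prop:SoficFinite} and Lemma~\ref{lem:OrderPreserving} (to get PCB$(r)$ for finite $r$ from the \PProperty), and then apply Proposition~\ref{prop:ShortChains}. The paper leaves the corollary unproved and merely remarks beforehand that the \PProperty implies PCB$(r)$; your added care about the two conjugacy passages (to the $\tilde X$ witnessing the \PProperty, and to a Wang-tile presentation for Lemma~\ref{lem:RProperty}) is appropriate bookkeeping that the paper glosses over.
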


\begin{corollary}
Let $X$ be a countable SFT with an infinite downward chain. Then the projective subdynamics of $X$ in any direction have Cantor-Bendixson rank at least $\omega$.
\end{corollary}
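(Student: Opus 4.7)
The plan is to derive this as a fairly direct consequence of Proposition~\ref{prop:ShortChains}, by reducing an arbitrary direction to the horizontal one via an $SL_2(\Z)$ rotation. I proceed by contradiction: suppose some direction $\vec d$ satisfies $\rank(\PS_{\vec d}(X)) = n < \omega$. I want to produce a countable SFT with the R and PCB$(n)$ properties that still has an infinite descending chain, and then invoke Proposition~\ref{prop:ShortChains} to get no chain of length $n+2$, which is the desired contradiction.

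First I would choose $A \in SL_2(\Z)$ with $A(A_{\frac{\pi}{2}}(\vec d)) = (0,1)$, so that by definition $\PS(A(X)) = \PS_{\vec d}(X)$. The map $x \mapsto A(x)$ is a homeomorphism of $S^{\Z^2}$ that conjugates the $\Z^2$-action, hence $A(X)$ is again an SFT, and it is countable because $A$ is a bijection on configurations. Moreover the subpattern relation is preserved by $A$: the pattern language of $A(x)$ is literally $\{A(P) : P \in \B(x)\}$, so $x \leq y$ iff $A(x) \leq A(y)$. In particular $\SP(A(X))$ is order-isomorphic to $\SP(X)$ and so contains an infinite descending chain. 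By construction $A(X)$ has the PCB$(n)$ property.

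Next I would verify that $A(X)$ has the R property. Since $A(X)$ is a countable SFT, and any SFT is conjugate to one presented by Wang tiles via a standard block-recoding, Lemma~\ref{lem:RProperty} applies after such a recoding; the R property and the rank of the horizontal projective subdynamics are invariants under conjugacy, as is countability and the existence of an infinite descending chain in the subpattern poset, so the conclusion transfers back to $A(X)$. Applying Proposition~\ref{prop:ShortChains} with $\lambda = n$ yields that $A(X)$ contains no proper descending chain of length $n+2$, contradicting the infinite descending chain in $\SP(A(X))$.

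The only genuine subtlety is verifying that all the relevant properties (countability, infinite descending chain in $\SP$, horizontal PCB$(\lambda)$, R) really do transfer cleanly along both the $SL_2(\Z)$ rotation and the Wang-tile recoding; this is routine but deserves to be stated carefully, as the R property in its given form is phrased for Wang-tile SFTs. Once this bookkeeping is done, the argument is a one-line appeal to Proposition~\ref{prop:ShortChains}.
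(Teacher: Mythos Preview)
Your proposal is correct and is precisely the argument the paper has in mind: the corollary is stated without proof right after Proposition~\ref{prop:ShortChains}, and your reduction---rotate by $A \in SL_2(\Z)$ so that $\PS(A(X)) = \PS_{\vec d}(X)$, then invoke Lemma~\ref{lem:RProperty} and Proposition~\ref{prop:ShortChains} for a contradiction---is exactly the intended route. You even flag the Wang-tile hypothesis in Lemma~\ref{lem:RProperty}, which the paper silently treats as a harmless normalisation.

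One small over-claim in your bookkeeping: you assert that the exact Cantor--Bendixson rank of the horizontal projective subdynamics and the R property are conjugacy invariants. Neither is obvious, and neither is needed. What suffices is that \emph{finiteness} of $\rank(\PS)$ survives the higher-block recoding to Wang tiles: if the recoding has vertical window $k$, then $\PS(X')$ is a one-dimensional factor of a subshift of $\PS(A(X))^k$, and finite rank is preserved by finite products, subspaces (Lemma~\ref{lem:OrderPreserving}), and continuous images. Hence the recoded SFT $X'$ satisfies PCB$(n')$ for some finite $n'$, and Proposition~\ref{prop:ShortChains} bounds its chains; since chain length in the subpattern poset \emph{is} a conjugacy invariant, the contradiction transfers back to $A(X)$ and thus to $X$.
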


A similar result was proved in \cite{BaDuJe08}: for a countable SFT $X$, if the Cantor-Bendixon rank of $X$ is $\lambda$, then there are downward chains of at most length $\lambda$ in $X$.

\section{Embedding finite posets in the subpattern poset}

In \cite{SaTo12b}, we constructed, for each finite poset $P$, a countable two-dimensional SFT $X_P$ whose subpattern poset contains a copy of $P$. We repeat here this construction, with some alterations to ensure determinism and the \PProperty. We also show how to combine the subshifts $X_P$ into a single countable SFT $X$ containing a copy of \emph{every} finite poset. The SFT $X$ is deterministic but by Corollary~\ref{cor:ShortChains} cannot have the \PProperty.

Note that all the embeddings we consider are order-embeddings, that is, any additional relations between elements of the embedded poset are forbidden.

\begin{theorem}
\label{thm:FinitePosets}
Let $(P, \geq)$ be a finite poset. There exists a countable deterministic SFT $X$ with the \PProperty such that $(P, \geq)$ can be order-embedded in $SP(X)$.
\end{theorem}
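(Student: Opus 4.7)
The plan is to adapt the countable SFT construction of \cite{SaTo12b} — which for each finite poset $P$ realizes $P$ as a sub-poset of $\SP(X_P)$ — by equipping the construction with determinism and the \PProperty. Writing $D(p) = \{q \in P \;|\; q \leq p\}$ for the principal down-set of $p$, the key observation is that $p \mapsto D(p)$ order-embeds $P$ into $(\mathcal{P}(P), \subseteq)$, so it suffices to produce configurations $x_p \in X$ for $p \in P$ whose finite pattern sets satisfy $\B(x_p) \subseteq \B(x_q) \iff D(p) \subseteq D(q)$.

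The natural structure to achieve this is a hierarchy of nested labeled regions: each configuration contains a nested sequence of regions (for concreteness, triangles bounded by discrete rational lines as in Theorem~\ref{thm:Chain}), each region carries a label from $P$, and the SFT rules enforce that a region with label $p$ can only contain sub-regions whose labels lie in $D(p)$, with every $r \in D(p)$ eventually appearing as a sub-region label. The canonical configuration $x_p$ is the one with outermost region labeled $p$, so that $x_p$'s set of labels is $D(p)$ and, by self-similarity, for every $r \in D(p)$ the labeled sub-structure inside $x_p$ locally looks like $x_r$.

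Determinism is secured by propagating the labels and boundary signals in a fixed direction via a counter-machine-style head analogous to the zig-zag head of Construction~\ref{con:CounterMachine}, made reversible by Lemma~\ref{thm:Morita}; the \PProperty follows because the discrete rational-line borders together with a bounded label alphabet fit into a countable one-dimensional SFT. Given these adaptations, the verification of the embedding then runs as follows: if $p \not\leq q$, some $r \in D(p) \setminus D(q)$ produces a label pattern in $\B(x_p) \setminus \B(x_q)$, giving $x_p \not\leq x_q$; conversely, for $p \leq q$ every finite pattern of $x_p$ is contained in some finite sub-region of $x_p$ whose label is an element of $D(p) \subseteq D(q)$, and by the self-similarity of the hierarchy the same pattern appears within an analogously labeled sub-region of $x_q$, yielding $x_p \leq x_q$.

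The main obstacle is simultaneously attaining all four properties — the embedding of $P$, countability, determinism, and the \PProperty — since countability forces the nondeterminism to be bounded (and makes an uncontrolled counter-machine computation dangerous), determinism forces a directed flow of information transverse to the hierarchy, and the \PProperty restricts the number of signal tracks that may cross any row. In particular, finite patterns of $x_p$ that meet the outermost boundary of its root region would risk recording the root label $p$ in a way that does not appear inside $x_q$; this is sidestepped by arranging every nested region to have identical local appearance at its border, so that the root label is invisible in any finite pattern and only the contents of enclosed regions carry label information. Combining this with the reversibility and bounded-signal adaptations described above produces a countable deterministic SFT $X_P$ with the \PProperty in which the assignment $p \mapsto x_p$ is an order-embedding of $P$ into $\SP(X_P)$.
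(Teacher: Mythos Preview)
Your high-level intuition---realize $P$ via a hierarchy of labeled regions so that the pattern language of $x_p$ records exactly the down-set $D(p)$---is sound, and this is also what the paper does. However, your concrete implementation choices diverge from the paper's and leave a genuine gap.

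The paper's construction is purely geometric: for each $p$ it builds an upward cone $f(p)$ with \emph{distinct exterior colors} $0_p,1_p,2_p$, a vertical stack of \emph{ruler squares} of strictly increasing side-lengths, and beside each ruler square one \emph{data square} for every immediate predecessor $q\lessdot p$, filled deterministically with the central $(2n{-}1)\times(2n{-}1)$ patch of $f(q)$. Determinism is obtained in direction $(1,2)$ by a simple diagonal signal enforcing squareness---no counter machine is used or needed---and the \PProperty holds because each row is a bounded concatenation of unary blocks. The order-embedding is then immediate: if $q<p$, every finite pattern of $f(q)$ fits in some large enough data square of $f(p)$; if $q\not\leq p$, the exterior color $0_q$ simply never occurs in $f(p)$.

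Your plan instead tries to make region borders \emph{label-blind} (``invisible root label''), with labels visible only in the interior, and to obtain determinism via a reversible counter machine. Two problems. First, invoking Construction~\ref{con:CounterMachine} and Lemma~\ref{thm:Morita} here is unnecessary machinery and introduces exactly the countability hazard you flag: the paper's diagonal-signal determinism is both simpler and manifestly countable. Second, and more seriously, the ``identical local appearance at its border'' / ``nested sequence of regions'' mechanism is underspecified at the critical point. For $x_p\leq x_q$ you need \emph{arbitrarily large} truncations of $x_p$ to appear inside $x_q$; a single nested chain gives only one $p$-labeled region of some fixed size, which is insufficient. You also need finite patterns of $x_p$ that straddle the (nonexistent) boundary of its infinite root region to be matched inside finite $p$-regions of $x_q$, and you do not explain how ``identical borders'' reconciles an infinite outer region with finite inner ones. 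The paper's solution---visible per-element colors plus an infinite stack of growing data squares containing honest truncations of each predecessor---sidesteps both issues directly, and is the missing ingredient in your sketch.
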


\begin{proof}
The idea of the construction is the following: Some configurations $x \in X$ correspond to elements of the poset, and such a configuration contains infinitely many special squares. Each square contains a pattern from some other configuration whose poset element is lower than the element corresponding to $x$. The direction of determinism of $X$ is $(1,2)$.

We now present the construction in more detail, constructing inductively the SFT $X$ and a function $f : P \to X$ that gives the desired order-embedding. Let $p \in P$ be arbitrary. We assume that for all $q \in P$ with $q < p$, the configuration $f(q)$ has already been defined, and that there are three colors $0_q, 1_q, 2_q$ such that for all $n \in \N$ larger than a constant $n_q \in \N$, we have $f(q)_{(-n, i)} = 0_q$ and $f(q)_{(n, i)} = 2_q$ for all $i \in \{-n, \ldots, n\}$, and $f(q)_{(i, -n)} = 1_q$ and $f(q)_{(i, n)} \neq 1_q$ for all $i \in \{-n+1, \ldots, n-1\}$. We define three new colors $0_p, 1_p, 2_p$ that will have the same property.

The point $f(p)$ contains an infinite cone extending upwards, with the outside of the cone filled by the three colors. We also have $f(p)_{-i} = \INF 0_p 1_p^{2i+1} 2_p \INF$ for all $i \in \N$, so that the northeast signal $0_p 1_p$ and the northwest signal $1_p 2_p$ meet at the origin, where they produce the base of the cone. The left border of the cone is vertical, and on its right there is a vertical stack of \emph{ruler squares}. The sidelength of the $n$th square is $2(n + N_p) - 1$, where $N_p \in \N$ is the smallest integer such that $n_q \leq 2 N_p - 1$ for all immediate predecessors $q$ of $p$. Note that as we are constructing an SFT and there are finitely many points $f(p)$, we can check that the first ruler square is of the correct size in all of them. The sizes are determined by initializing the width of the first square by a local rule and then incrementing it by $2$ using another local rule, and the squareness is deterministically enforced by a diagonal signal initialized at the southeast corner. See Figure~\ref{fig:Squares} for a visualization.

\input{Squares}

To the right of each ruler square are a number of other squares, called \emph{data squares}, one for each immediate predecessor $q$ of $p$. Note that there may not be any data squares at all in $f(p)$, if $p$ is a bottom element of $P$. The data squares have the same height and width as the ruler square (the widths are enforced by the same method). The data squares are filled with patterns from the $f(q)$. More precisely, the west border of the square is filled with the color $0_q$, the south border with $1_q$, and the east border with $2_q$. Then the central square of $f(q)$ is deterministically formed inside the data square, interrupted by the north border. In the construction, each $f(p)$ will extend the alphabet with completely new symbols (apart from the ones used to simulate the $f(q)$), and each region in the construction will have a different uniform background to differentiate them from each other.

Define $X$ as the orbit closure of $\{f(p) \;|\; p \in P\}$. It is clear that $f$ induces an order-embedding of $P$ into $SP(X)$, and that $X$ is a deterministic SFT with the \PProperty. It remains to be shown that $X$ is countable, and for that, let $x \in X$ be arbitrary. It is an easy case analysis that there are a countable number of configurations that do not contain the base of any cone, so let $p \in P$ be maximal such that $x$ contains the base of the cone of $f(p)$. Now, if $x$ is not a translate of $f(p)$, it must contain a border of the data rectangle of $p$ from $f(q)$, where $q > p$ in $P$. This cannot be the south, west or east border, since otherwise $x$ would contain the entire data square, and then it is easy to see that $x$ in fact contains the base of the cone of $f(q)$, a contradiction with the maximality of $p$. Thus $x$ contain the north border of the data square, for whose position we have countably many choices. Furthermore, the area to the north of that border must be filled with the symbol $1_p$. This proves that $X$ is countable. 
\end{proof}

We can now construct both an infinite downward chain by Theorem~\ref{thm:Chain}, and any finite poset by Theorem~\ref{thm:FinitePosets}. It makes sense to ask if some kind of combination of these two constructions would give us a large class of infinite posets with the ascending chain condition. In fact, this is the case, but the construction is somewhat complicated and will thus be the subject of a future paper. For now, we only prove the following almost-closure property of subpattern posets of countable SFTs.

\begin{theorem}
\label{thm:ManyPosets}
Let $(X_i)_{i \in \N}$ be a computable sequence of countable SFTs, and let $P_i$ be the subpattern poset of $X_i$. There exists a countable SFT $X$ whose subpattern poset is an incomparable disjoint union
\[ P = P' \boxtimes (\boxtimes_{i=0}^\infty) P_i, \]
where $P'$ is a poset of height $4$. If the $X_i$ are northeast deterministic, $X$ can also be made such.
\end{theorem}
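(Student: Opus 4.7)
The plan is to adapt the grid shift of Example~\ref{ex:Grid} into a scaffold whose cell size is a type marker: inside each cell of size $n$ I would simulate the Turing machine that enumerates the rules of $X_n$, and I would encode, at a designated position of each cell, one symbol of the alphabet of $X_n$. Because $(X_i)_{i\in\N}$ is computable, a single TM $T$ on input $n$ outputs (an initial segment of) the alphabet and forbidden patterns $F_n$ of $X_n$; fix $n_0$ large enough that the first $n$ steps of $T(n)$ fit inside an $n\times n$ cell for all $n\ge n_0$, and modify the grid shift so the cell size is always at least $n_0$. Local SFT rules then enforce that the simulation runs correctly, that the central data symbol of each cell lies in the currently computed part of the alphabet of $X_n$, and that for every currently enumerated forbidden pattern $P\in F_n$ the tuple of central data symbols of the cells in the appropriate position does not equal $P$. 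The computability hypothesis is used exactly to package all these checks as a single finite forbidden-pattern set of $X$.

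For each fixed $n\ge n_0$, the configurations of $X$ whose grid has cell size $n$ are, up to the finite translation ambiguity in the grid origin, in bijection with configurations of $X_n$: every forbidden pattern of $X_n$ is caught once enough cells (equivalently, enough simulation steps) are inspected, and any configuration of $X_n$ can be lifted onto the unique size-$n$ decorated grid. This bijection is an order-isomorphism of the subpattern posets, so $P_n$ embeds into $SP(X)$. Embeddings for different $n$ are mutually incomparable, because any sufficiently large pattern from a grid-size-$n$ configuration contains a grid corner and thus locally determines $n$; such a pattern cannot occur inside a grid-size-$m$ configuration with $m\ne n$. Hence the part of $SP(X)$ coming from genuine grids is exactly $\boxtimes_{n\ge n_0} P_n$, and (after a harmless reindexing) $\boxtimes_{n\in\N} P_n$.

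The remaining configurations of $X$ are degenerate: they fail to contain a biinfinite regular grid of any fixed size. By the same case analysis used for the grid shift itself, they are classified up to translation by a short menu of shapes (empty background; one infinite line; one cross of two perpendicular lines; a quadrant or half-plane of regular cells without the complementary quadrant). These configurations form a countable sub-poset $P'$ of $SP(X)$ of height at most $4$, incomparable with all the $P_n$ by the same detection argument as above. Countability of $X$ follows from countability of each $X_n$ and of $P'$. For the deterministic version, Example~\ref{ex:Grid} is already NE-deterministic, and the per-cell simulation can be arranged to propagate diagonally from SW to NE, so if each $X_i$ is NE-deterministic then the decorated grid (built on top of an NE-deterministic scaffold) is NE-deterministic too.

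The main obstacle is the careful packaging of the per-cell simulation and the inter-cell checks as a single finite-type rule: I need to ensure that \emph{every} forbidden pattern of \emph{every} $X_i$ is eventually caught (uniformly in $i$), that nothing accidentally identifies a size-$n$ decorated pattern with a size-$m$ one, and that the degenerate portion $P'$ has height exactly $4$ rather than more. This is standard but intricate counter-machine-in-a-cone bookkeeping in the spirit of Construction~\ref{con:CounterMachine}, now replicated inside each cell of the grid scaffold.
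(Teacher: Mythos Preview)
Your overall architecture---the grid shift of Example~\ref{ex:Grid} as a scaffold, with one symbol of $X_i$ encoded per cell and a per-cell computation verifying the local rules---is exactly the paper's approach. The gap is in how you tie the index $i$ to the grid. You propose to let the cell size $n$ itself be the index, running ``the first $n$ steps of $T(n)$'' inside each $n\times n$ cell. But $X_n$ is an SFT with a finite list of forbidden patterns whose enumeration by $T$ may take time $h(n)$ arbitrarily larger than $n$; your cells then only see a proper prefix of $F_n$, so what you realize on size-$n$ grids is a potentially strict supershift of $X_n$, not $X_n$ itself. Your remark that ``every forbidden pattern of $X_n$ is caught once enough cells are inspected'' does not rescue this: every cell runs the \emph{same} $n$ steps of the \emph{same} computation, so inspecting more cells produces no further forbidden patterns.

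The paper decouples the index from the grid size. The index $i$ is stored as a separate counter and synchronized across all cells via dedicated signal rows; the per-cell counter machine then computes the \emph{full} rule set of $X_i$ from $i$ (after first recoding each $X_i$ to use allowed patterns of the fixed shape $\{(0,0),(1,0),(1,1)\}$) and checks that the triple $(s_{sw},s_s,s)$ of its own state and those of its south and southwest neighbours is allowed. The synchronization trick you are missing is that the machine is required to accept \emph{exactly} at the top-left corner of its square: this forces the grid size to equal the running time on input $i$, so each $i$ is realized on precisely one grid size and only one copy of $P_i$ appears in $\SP(X)$. The running time is made independent of the data $(s_{sw},s_s)$ by padding with sleep commands, and the visible index counter keeps the $P_i$ pairwise incomparable even if distinct $i$'s happened to give the same running time.
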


Note that the \PProperty is not preserved. If the $X_i$ are deterministic in some directions $\vec{d}_i$ computable from $i$, we can apply transformations by suitable elements of $SL_2(\Z)$ to make them northeast deterministic, since such transformations are effective and preserve the subpattern posets up to an order-isomorphism.

\begin{proof}
We assume the SFTs $X_i$ are defined with allowed patterns of the shape $\{(0,0), (1,0), (1,1)\}$ over the alphabet $\{1, \ldots, n_i\}$. It is well known that every SFT is effectively conjugate to an SFT of this form. Let $M'$ be a deterministic counter machine that, from input $i$, computes $n_i$ and the list of allowed patterns of $X_i$ of this form, and then halts.

We take as a starting point the countable SFT of Example~\ref{ex:Grid} whose configurations are grids of arbitrary size, with the lower right triangle of every grid element colored differently from the top left triangle to ensure square shape. We duplicate the colors, and arrange for the colors used on rows to alternate: Squares of every second row are called \emph{computation squares}, and they form \emph{computation rows}. The other squares are called \emph{signal squares}, and they form \emph{signal rows}. In the top left triangle of each computation square we run a counter machine $M$, starting from the lower left corner of the square. We use the simulation in Construction~\ref{con:CounterMachine}, apart from implementing some operations `directly in the SFT' for synchronization reasons, and from using certain signals of the SFT as `input counters'. We are able to choose the machine $M$ rather independently of the rest of the tiling rules, as it will only affect the rest of the tiling by rejecting or accepting at some moment.

The signal squares are used to send information between computation squares, and a finite (fixed) amount of vertical or diagonal signals are sent through them. The grid and the zig zag heads of the counter machines are illustrated Fig.~\ref{fig:ManyPosets} (a). The counters of the counter machine are not shown. Note that in order to ensure countability, the signals must be represented as borders of differently colored areas. This is not shown in the figures, but the same applies to all signals described below.

Every computation square contains (in addition to the counters of $M$) $4$ named counters: the \emph{index} $i$, the \emph{state} $s$, the \emph{south state} $s_s$ and the \emph{south west state} $s_{sw}$. The index, south state and south west state are immutable input counters, but the counter machine can freely change the value of the state counter. The index counters of squares in the grid are forced to be equal for all machines in any single configuration with a standard signal construction running on the signal rows, see Fig.~\ref{fig:ManyPosets} (b). The counters $s_{sw}$ and $s_s$ are copied from the final contents of the $s$ counter of the computation square in the direction $(-1, -2)$, and the $s$ counter of the computation square to the south, respectively, again with a standard signal construction shown in Fig.~\ref{fig:ManyPosets} (c).

The idea is that the eventual values of the $s$ counters of the machines simulate a scaled version of the SFT $X_i$. The behavior of $M$ is independent of the grid size, except that $M$ must enter an accepting state for the first time \emph{exactly} at the top left corner of the computation squares. Otherwise, a tiling error is produced. The reason for this is to ensure that every SFT $X_i$ can be simulated on a grid of exactly one size so that only one copy of the subpattern poset of $X_i$ appears in the subpattern poset of $X$. The algorithm of $M$ is described in Algorithm~\ref{alg:PosetSim}.

\begin{algorithm}
\caption{The program of the counter machine $M$}\label{alg:PosetSim}
\begin{algorithmic}[1]

\State \textbf{input} $i, s_s, s_{sw} \in \N$
\State $s \gets 0$
\State $n_i, A \gets M'(i)$
\Comment{Colors $[1, n_i]$, allowed triples $A$}
\If{$n_i < s_s \vee n_i < s_{sw}$} \label{ln:Const1}
\Comment{a constant-time operation}
  \State \textbf{reject}
\EndIf
\ForAll{$(a, b, c) \in [1, n_i]^3$} \label{ln:Loop}
\Comment{time $T(i) = \sum_{a, b, c \in \left[1, n_i \right]^3} T'(a, b, c, A)$}
  \If{$(a, b, c) \in A$}
  \Comment{time $T'(a, b, c, A)$}
    \State \Choose{$d \in \{0, 1\}$} \label{ln:DChoice}
    \If{$a = s_{sw} \wedge b = s_s$} \label{ln:Const2}
		\Comment{a constant-time operation}
      \If{$d = 1$}
        \State $s \gets c$ \label{ln:Const3}
				\Comment{$1$ step}
      \Else
        \State \Nop{1}
      \EndIf
    \Else
      \State \Nop{2}
    \EndIf
  \EndIf
\EndFor
\If{$s = 0$}
  \State \textbf{reject}
\Else
  \State \textbf{accept}
\EndIf
\end{algorithmic}
\end{algorithm}

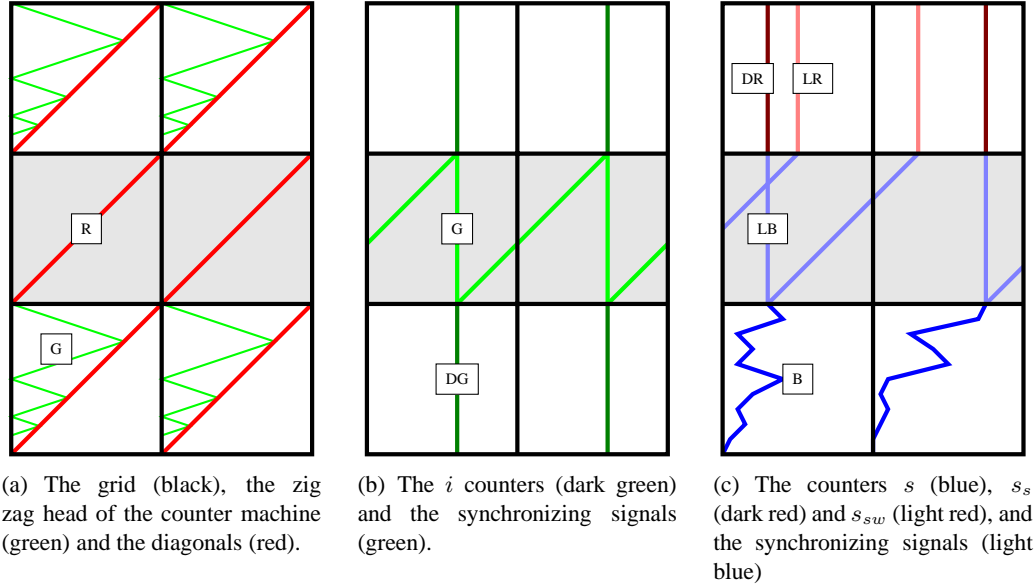
\begin{figure}[!ht]
\centering
\subfigure[The grid (black), the zig zag head of the counter machine (green) and the diagonals (red).]
{
\begin{tikzpicture}[scale = 2]

\colorlet{framecolor}{black}
\colorlet{diagonalcolor}{red}
\colorlet{icolor}{green}
\colorlet{imsgcolor}{green!50!white}
\colorlet{scolor}{blue}
\colorlet{smsgcolor}{blue!50!white}
\colorlet{zigzagcolor}{green}

\coordinate (dsw) at (0,0);
\coordinate (dnw) at (0,1);
\coordinate (ds) at (1,0);
\coordinate (dn) at (1,1);
\coordinate (dse) at (2,0);
\coordinate (dne) at (2,1);

\coordinate (usw) at (0,2);
\coordinate (unw) at (0,3);
\coordinate (us) at (1,2);
\coordinate (un) at (1,3);
\coordinate (use) at (2,2);
\coordinate (une) at (2,3);

\fill[color=black!10!white] (dnw) rectangle (use);

\draw[thick, zigzagcolor] (0,1/8) -- (3/16,3/16) -- (0,1/4) -- (3/8,3/8) -- (0,1/2) -- (3/4,3/4) -- (0,1);
\draw[thick, zigzagcolor] (1,1/8) -- (1+3/16,3/16) -- (1,1/4) -- (1+3/8,3/8) -- (1,1/2) -- (1+3/4,3/4) -- (1,1);
\draw[thick, zigzagcolor] (0,2+1/8) -- (3/16,2+3/16) -- (0,2+1/4) -- (3/8,2+3/8) -- (0,2+1/2) -- (3/4,2+3/4) -- (0,2+1);
\draw[thick, zigzagcolor] (1,2+1/8) -- (1+3/16,2+3/16) -- (1,2+1/4) -- (1+3/8,2+3/8) -- (1,2+1/2) -- (1+3/4,2+3/4) -- (1,2+1);

\draw[ultra thick, diagonalcolor] (dsw) -- (dn) (ds) -- (dne) (dnw) -- (us) (dn) -- (use) (usw) -- (un) (us) -- (une);

\draw[ultra thick, framecolor] (dsw) -- (dse) -- (une) -- (unw) -- cycle;
\draw[ultra thick, framecolor] (dnw) -- (dne) (usw) -- (use) (ds) -- (un);

\picnote{(0.3,0.7)}{G};
\picnote{(0.5,1.5)}{R};

\end{tikzpicture}
}
\quad
\subfigure[The $i$ counters (dark green) and the synchronizing signals (green).]
{
\begin{tikzpicture}[scale = 2]

\colorlet{framecolor}{black}
\colorlet{diagonalcolor}{red}
\colorlet{icolor}{green!50!black}
\colorlet{imsgcolor}{green}
\colorlet{scolor}{blue}
\colorlet{smsgcolor}{blue!50!white}

\coordinate (dsw) at (0,0);
\coordinate (dnw) at (0,1);
\coordinate (ds) at (1,0);
\coordinate (dn) at (1,1);
\coordinate (dse) at (2,0);
\coordinate (dne) at (2,1);

\coordinate (usw) at (0,2);
\coordinate (unw) at (0,3);
\coordinate (us) at (1,2);
\coordinate (un) at (1,3);
\coordinate (use) at (2,2);
\coordinate (une) at (2,3);

\coordinate (dsw-i) at (0.6,0);
\coordinate (dnw-i) at (0.6,1);
\coordinate (usw-i) at (0.6,2);
\coordinate (unw-i) at (0.6,3);
\coordinate (dse-i) at (1.6,0);
\coordinate (dne-i) at (1.6,1);
\coordinate (use-i) at (1.6,2);
\coordinate (une-i) at (1.6,3);

\fill[color=black!10!white] (dnw) rectangle (use);

\draw[ultra thick, icolor] (dsw-i) -- (dnw-i) (dse-i) -- (dne-i) (usw-i) -- (unw-i) (use-i) -- (une-i);
\draw[ultra thick, imsgcolor] (dnw-i) -- (usw-i) (dnw-i) -- (use-i) (dne-i) -- (use-i);

\draw[ultra thick, imsgcolor] (usw-i) -- (0,1.4) (dne-i) -- (2,1.4);

\draw[ultra thick, framecolor] (dsw) -- (dse) -- (une) -- (unw) -- cycle;
\draw[ultra thick, framecolor] (dnw) -- (dne) (usw) -- (use) (ds) -- (un);

\picnote{(0.6,0.5)}{DG};
\picnote{(0.6,1.5)}{G};

\end{tikzpicture}
}
\quad
\subfigure[The counters $s$ (blue), $s_s$ (dark red) and $s_{sw}$ (light red), and the synchronizing signals (light blue)]
{
\begin{tikzpicture}[scale = 2]

\colorlet{framecolor}{black}
\colorlet{diagonalcolor}{red}
\colorlet{scolor}{blue}
\colorlet{smsgcolor}{blue!50!white}
\colorlet{sscolor}{red!50!black}
\colorlet{sswcolor}{red!50!white}

\fill[color=black!10!white] (0,1) rectangle (2,2);

\draw[ultra thick, scolor] (0,0) -- (0.05,0.1) -- (0.15,0.2) -- (0.1,0.3) -- (0.2,0.4) --
              (0.4,0.5)-- (0.1,0.6)-- (0.2,0.7)-- (0.1,0.8)-- (0.4,0.9) -- (0.3,1);
\draw[ultra thick, scolor] (1,0) -- (1,0.1) -- (1.05,0.2) -- (1.1,0.3) -- (1.05,0.4) --
              (1.1,0.5)-- (1.5,0.6)-- (1.4,0.7)-- (1.25,0.8)-- (1.7,0.9) -- (1.75,1);
              
\draw[ultra thick,smsgcolor] (0.3,1) -- (0.3,2) (1.75,1) -- (1.75,2) (0.3,1) -- (1.3,2);
\draw[ultra thick,smsgcolor] (1.75,1) -- (2,1.25) (0,1.5) -- (0.5,2);

\draw[ultra thick,sscolor] (0.3,2) -- (0.3,3) (1.75,2) -- (1.75,3);
\draw[ultra thick,sswcolor] (0.5,2) -- (0.5,3) (1.3,2) -- (1.3,3);

\draw[ultra thick, framecolor] (dsw) -- (dse) -- (une) -- (unw) -- cycle;
\draw[ultra thick, framecolor] (dnw) -- (dne) (usw) -- (use) (ds) -- (un);

\picnote{(0.5,0.5)}{B};
\picnote{(0.2,2.5)}{DR};
\picnote{(0.6,2.5)}{LR};
\picnote{(0.3,1.5)}{LB};

\end{tikzpicture}
}
\caption{Diagrams illustrating the signals of Theorem~\ref{thm:ManyPosets}. In each diagram, six grid cells are shown. The computation rows are white, and the signal rows are shaded.}
\label{fig:ManyPosets}
\end{figure}

We use Construction~\ref{con:CounterMachine} to obtain the SFT rules for the computation squares, apart from lines \ref{ln:Const1}, \ref{ln:Const2} and \ref{ln:Const3}. The comparisons and assignments on these lines need to be implemented so that they take the same amount of time independently of the values in the variables, as we want all computation squares to execute the algorithm in the same amount of time (so that a simulation of a tiling of $X_i$ actually appears on some grid size). This is hard to do with counter machines, but it is very easy to do directly in the SFT by slightly modifying Construction~\ref{con:CounterMachine}: Comparing two variables in one sweep of the zig zag head is a matter of remembering which one is seen first before reaching the right border, and setting the value of counter $i$ to the value of counter $j$ in one sweep can be done by grabbing counter $i$ while moving to the right, and dropping it on the counter $j$ on the way back when the counter $j$ is seen. An extra complication on lines \ref{ln:Const1} and \ref{ln:Const2} is that the input counters $s_s$ and $s_{sw}$ might not yet be visible to the zig zag head, but in this case they are larger than $a$, $b$ and $n_i$, so that comparisons are still doable.

After this implementation detail, it is easy to arrange for the number of steps that $M$ takes on the inputs $i, s_s, s_{sw}$ to be a function of only $i$, as is done on the level of pseudocode in Algorithm~\ref{alg:PosetSim} with sleep commands. Then, for each index $i$, simulations of configurations of $X_i$ happen on grids of exactly one size. A single copy of $P_i$ then appears in the subpattern poset of $X$, and we can distinguish between posets of $X_i$ for different $i$ by looking at the $i$ counter. The extra $P'$ comes from configurations with infinite squares, and the claim about its height is easily checked.

It is easy to check that there are only countably many configurations: For any grid size and choice of $i$, either all configurations of $X_i$ are simulated, or no valid configurations exist. If the $X_i$ are deterministic, we change the line~\algref{alg:PosetSim}{ln:DChoice} to $d \gets 1$. Determinism in the direction $(1, 3)$ (and many others) then follows from the fact that the grid, the signals and the zig zag head of $M$ are all deterministic in this direction. 
\end{proof}

It is easy to see that the construction of Theorem~\ref{thm:FinitePosets} is computable from a given finite poset. We can then apply Theorem~\ref{thm:ManyPosets} to a Turing machine enumerating these SFTs for all finite posets, obtaining the following result.

\begin{corollary}
There exists a deterministic countable SFT $X$ such that the subpattern poset of $X$ contains an embedded copy of every finite poset.
\end{corollary}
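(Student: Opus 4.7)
The plan is a direct application of Theorem~\ref{thm:ManyPosets} to the family of SFTs produced by Theorem~\ref{thm:FinitePosets}. First, I would fix a computable enumeration $(P_i)_{i \in \N}$ of all finite posets up to isomorphism (for example, encoded as $n \times n$ Boolean matrices satisfying reflexivity, transitivity, and antisymmetry, ordered lexicographically). Since every finite poset is isomorphic to some $P_i$ and order-embedding is invariant under isomorphism, it suffices to construct a single countable deterministic SFT $X$ whose subpattern poset contains an order-embedded copy of each $P_i$.

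Next, I would inspect the proof of Theorem~\ref{thm:FinitePosets} and verify that it is uniformly effective in its input: given a finite poset $P$, the inductive construction, alphabet choices, widths $N_p$, and forbidden patterns of $X_P$ depend only on the Hasse diagram of $P$ in a mechanical way, so the function $P \mapsto X_P$ is computable. Moreover, all the $X_P$ are deterministic in the same fixed direction $(1,2)$. Applying the same element of $SL_2(\Z)$ that sends $(1,2)$ to $(1,1)$ (say) to every $X_{P_i}$ produces a computable sequence $(X_i)_{i \in \N}$ of countable northeast deterministic SFTs, where each $X_i$ is order-isomorphic (as a subshift) to the SFT given by Theorem~\ref{thm:FinitePosets} for $P_i$; in particular $P_i$ order-embeds into $SP(X_i)$.

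Now I would feed this computable sequence into Theorem~\ref{thm:ManyPosets} to obtain a countable northeast deterministic SFT $X$ whose subpattern poset has the form
\[
SP(X) \;\cong\; P' \boxtimes \boxtimes_{i \in \N} SP(X_i),
\]
where $P'$ is some poset of height $4$. Since the $\boxtimes$ operation is a disjoint union with no relations between components, an order-embedding $\phi_i : P_i \hookrightarrow SP(X_i)$ composes with the inclusion $SP(X_i) \hookrightarrow SP(X)$ to give an order-embedding $P_i \hookrightarrow SP(X)$. Hence every finite poset order-embeds into $SP(X)$, and $X$ is the desired countable deterministic SFT.

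The only delicate point, and the step I would devote the most care to, is checking the uniformity and effectiveness of the construction of Theorem~\ref{thm:FinitePosets}: one must confirm that the sequence $(X_i)$ is indeed computable in the sense required by Theorem~\ref{thm:ManyPosets} (that is, that a Turing machine can, on input $i$, output a finite description of $X_i$ including its alphabet and forbidden patterns). Everything else is a routine composition of results already established in the paper, and no genuinely new SFT construction is needed at this stage.
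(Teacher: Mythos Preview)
Your proposal is correct and follows exactly the route the paper takes: observe that the construction of Theorem~\ref{thm:FinitePosets} is uniformly computable from the input poset, enumerate all finite posets, and feed the resulting computable sequence of (suitably rotated) deterministic countable SFTs into Theorem~\ref{thm:ManyPosets}. Your version is in fact more carefully spelled out than the paper's one-sentence justification, including the explicit $SL_2(\Z)$ rotation to northeast determinism that the paper mentions only in the remark following Theorem~\ref{thm:ManyPosets}.
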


\section{Future work}

There is much more to say about the one-dimensional case. It would be interesting to study the derivatives of countable sofic shifts more thoroughly. In particular, it would be interesting to understand which countable SFTs and sofic shifts are `integrable' within SFTs or sofic shifts, and which subshifts have SFT derivatives. For example, the countable SFT $X = \B^{-1}(0^*1^*2^*)$ is not integrable within the class of sofic shifts, since for all $m \in \N$, its integral $Y$ should contain a pattern $a 0^\ell 1^m 2^n b$ with $(a,b) \neq (0,2)$ and arbitrarily large $\ell$ and $n$. But if the integral is sofic, then for some $(a,b) \neq (0,2)$ and $\ell, n \in \N$ there exist infinitely many $m$ as above, and then both $a 0^\ell 1$ and $1 2^n b$ are patterns of $Y^{(1)} = X$, a contradiction (in fact, $X$ is not integrable at all, by a slightly extended argument). An analogous argument shows that $\B^{-1}(0^*10^*20^*)$ is not integrable within the class of sofic shifts either, even though it has the nonsofic integral $\B^{-1}(\{0^k10^\ell10^m20^n \;|\; k, \ell, m, n \in \N, \ell \geq m\})$. Of course, it would also be interesting to know what happens in the case of an uncountable sofic shift as well.

The \PProperty and its relatives are interesting also in the two-dimensional uncountable case: We can construct an uncountable SFT which has the \SoficPProperty (the rows are contained in a countable sofic shift) in all but one direction such that projective subdynamics are countable in every direction. Similarly, it is easy to construct a sofic shift with the \SoficPProperty in every direction which is itself uncountable. However, we do not see how to construct an uncountable SFT which has the \PProperty or \SoficPProperty in every direction. In fact we conjecture that this cannot be done.

\begin{conjecture}
\label{conj:AllPIsCountable}
An uncountable SFT cannot have the \PProperty in every direction.
\end{conjecture}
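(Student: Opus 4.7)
My plan is to attack the conjecture by contradiction: suppose $X \subset S^{\Z^2}$ is an uncountable SFT with the \PProperty in every rational direction, and derive a contradiction by showing $X$ must in fact be countable. The first step is to apply Lemma~\ref{lem:UniqueParsing} to each projective subdynamics $\PS_{\vec d}(X)$. This gives, for every rational direction $\vec d$, a finite tuple set $T_{\vec d}$ such that every row in direction $\vec d$ of every $x \in X$ admits a unique parsing as ${}^\infty u_0 v_1 u_1^{n_1} \cdots v_m u_m^\infty$ with $(u_0,\ldots,u_m,v_1,\ldots,v_m) \in T_{\vec d}$. In particular, there are uniform bounds $K_{\vec d}$ on the number of transition blocks $v_i$ in any row and $P_{\vec d}$ on the period lengths $|u_i|$.

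The central step is then to show that for each $x \in X$ and each direction $\vec d$, the \emph{transition set} $\tau_{\vec d}(x) \subset \Z^2$ (consisting of coordinates lying in some $v_i$-block of some row in direction $\vec d$) is finite. For this I would fix two linearly independent rational directions $\vec d_1, \vec d_2$ and prove a rigidity lemma: if $\tau_{\vec d_1}(x)$ is infinite, then since each line in direction $\vec d_1$ meets $\tau_{\vec d_1}(x)$ in at most $K_{\vec d_1}$ points, the set must be unbounded transverse to $\vec d_1$. By pigeonhole combined with the structure of the parsing, infinitely many of these transition coordinates would then have to lie in a single row in direction $\vec d_2$ (or in direction $\vec d_3$ chosen transverse to both, if two directions do not suffice), and those coordinates would genuinely break the local periodicity of that $\vec d_2$-row, forcing it to contain more than $K_{\vec d_2}$ transition blocks; this contradicts the \PProperty in direction $\vec d_2$.

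Once $\tau_{\vec d_1}(x)$ and $\tau_{\vec d_2}(x)$ are both finite, $x$ is simultaneously periodic with period vector $P_{\vec d_1}!\cdot\vec d_1$ and $P_{\vec d_2}!\cdot\vec d_2$ outside a bounded exceptional rectangle $R(x)$. Since $\vec d_1$ and $\vec d_2$ are linearly independent, $x$ is doubly periodic in the cofinite complement of $R(x)$, and such configurations admit a countable description: the finite data on $R(x)$, the location of $R(x)$ up to shift, and the (bounded) choices of periodic extension in each quadrant outside $R(x)$. Hence there are only countably many such $x$, contradicting the uncountability of $X$.

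The main obstacle is the rigidity lemma --- that a "spread-out" family of transitions in one direction produces arbitrarily many transitions in some single row of another direction. Making this precise requires controlling how a $v_i$-block in one parsing interacts with the parsing in a different direction: a local irregularity at a single cell need not by itself force a transition in the parsing of a transverse row, because that row may absorb the irregularity into one of its own $v_j$-blocks. I expect the correct formulation to require choosing the auxiliary direction so that the "width" of a $v_i$-block projects nontrivially onto the auxiliary row, and this choice may need to exploit \PProperty in infinitely many directions to cover all possible tuple sets $T_{\vec d_1}$ arising in step one. This bookkeeping step is where I anticipate the bulk of the technical work.
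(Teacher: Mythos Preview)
This statement is presented in the paper as an \emph{open conjecture} in the Future Work section; the paper offers no proof, so there is nothing to compare your proposal against.

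As for the proposal itself, the central ``rigidity lemma'' --- that $\tau_{\vec d}(x)$ is finite for every $x$ and every $\vec d$ --- is simply false, already for configurations in countable SFTs having the \PProperty in every direction. Take the half-plane SFT $\{x \in \{a,b\}^{\Z^2} : x_{(i,j)} = a \Rightarrow x_{(i,j+1)} = a\}$, and the configuration $x$ with $a$'s on and above row $0$ and $b$'s below. Every horizontal row is constant, so $\tau_{(0,1)}(x) = \emptyset$; but every vertical column is $\INF b\,a\INF$, so $\tau_{(1,0)}(x) = \{(n,0) : n \in \Z\}$ is infinite. Your proposed repair --- pick an auxiliary direction $\vec d_2$ so that infinitely many vertical transitions ``genuinely break'' some single $\vec d_2$-row --- cannot work here: the only $\vec d_2$-rows containing infinitely many points of the line $y=0$ are horizontal rows, and those are constant, with no transitions at all. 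The same phenomenon occurs in Example~\ref{ex:Nondeterministic} and in essentially every nontrivial countable SFT with the \PProperty, so the obstacle you flag is not bookkeeping but a false intermediate claim.

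More structurally, your outline never invokes the hypothesis that $X$ is an SFT (only that each $\PS_{\vec d}(X)$ lies in a countable one-dimensional SFT). The paper notes just before the conjecture that uncountable \emph{sofic} shifts with the \SoficPProperty in every direction are easy to build, so any successful attack must use the SFT hypothesis in an essential way; an argument that treats $X$ as a black-box subshift with constrained rows cannot separate the SFT case from the sofic one.
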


The construction of a countable SFT with $-\omega$ in the subpattern poset can be generalized to include a much larger class of posets, and this will be the topic of a future paper. On the other hand, it would be interesting to obtain a characterization for the finite posets that are \emph{exactly} realizable as subpattern posets of countable SFTs. In Theorem~\ref{thm:FinitePosets}, we managed to embed every finite poset $P$ into a subpattern poset $\SP(X)$, but $\SP(X)$ contained many more elements than just the copy of $P$. We also know that, for example, the two-element poset $\{0, 1\}$ with $0 < 1$ cannot be realized as the subpattern poset of a countable SFT.

There are some interesting connections between countable SFTs and SFTs with the \PProperty: In \cite{SaTo12c}, we proved that nilpotency is decidable for cellular automata on countable one-dimensional SFTs, because the cellular automaton must either map all configurations to a uniform configuration in a finite number of steps, or it must have a spaceship, and both conditions are easy to detect algorithmically. This implies that it is decidable whether an extendably deterministic SFT with the \PProperty has more than one point. We can prove a similar theorem for SFTs which are either countable or have the \PProperty, using results of \cite{BaDuJe08} in the countable case and techniques of \cite{SaTo12c} in the \PProperty case: Given the forbidden patterns of an SFT $X$ which is either countable or has the \PProperty, it is decidable whether $X$ contains a singly periodic point. However, the problem is known to be undecidable for general SFTs. It would be interesting to investigate such connections further.

\section*{Acknowledgements}

We would like to thank Alexis Ballier, our advisor Jarkko Kari and Charalampos Zinoviadis for fruitful discussions, and the anonymous referees for their suggestions which significantly improved the paper.

\bibliographystyle{fundam}
\bibliography{../../../bib/bib}{}

\end{document}